\newtheorem{theorem}{Theorem}
\newtheorem{proposition}[theorem]{Proposition}
\newtheorem{lemma}[theorem]{Lemma}
\theoremstyle{definition}
\newtheorem{remark}[theorem]{Remark}
\newtheorem{definition}[theorem]{Definition}
\numberwithin{theorem}{section}
\numberwithin{equation}{section}
\newcommand{\R}{\mathbb{R}}
\newcommand{\N}{\mathbb{N}}
\newcommand{\pa}{\partial}
\renewcommand{\div}{\,{\rm div}\,}
\newcommand{\cC}{{\mathcal C}}
\newcommand{\cF}{{\mathcal F}}   
\newcommand{\cH}{{\mathcal H}}
\newcommand{\cK}{{\mathcal K}}
\newcommand{\cO}{{\mathcal O}}
\newcommand{\cR}{{\mathcal R}}
\newcommand{\cV}{{\mathcal V}}
\newcommand{\dist}{{\rm dist}}
\newcommand{\supp}{{\rm supp}}
\newcommand{\weak}{\rightharpoonup}
\newcommand{\eps}{\varepsilon}
\DeclareMathOperator{\loc}{loc}
\renewcommand{\epsilon}{\varepsilon}
\author[N. Soave]{Nicola Soave}\thanks{}
\address{Nicola Soave \newline \indent
Dipartimento di Matematica,  Politecnico di Milano,  \newline \indent
Via Edoardo Bonardi 9, 20133 Milano, Italy}
\email{nicola.soave@gmail.com; nicola.soave@polimi.it}
\author[S. Terracini]{Susanna Terracini}\thanks{}
\address{Susanna Terracini \newline \indent
 Dipartimento di Matematica ``Giuseppe Peano'', Universit\`a di Torino, \newline \indent
Via Carlo Alberto, 10,
10123 Torino, Italy}
\email{susanna.terracini@unito.it}
\title[The nodal set of solutions to singular equations]{The nodal set of solutions to some elliptic \\ problems: singular nonlinearities}
\keywords{Singular Lane Emden equations, nodal solutions, nodal set, nondegeneracy}
\subjclass[2010]{35B05, 35R35 (35J60, 35B60, 28A78)}
\thanks{The authors are partially supported by the ERC Advanced Grant 2013 n. 339958 ``Complex Patterns for Strongly Interacting Dynamical Systems - COMPAT''. N. Soave is partially supported by the PRIN-2015KB9WPT\texttt{\char`_}010 Grant: ``Variational methods, with applications to problems in mathematical physics and geometry", and by the INDAM-GNAMPA project ``Aspetti non-locali in fenomeni di segregazione".}
\date{\today}
\begin{document}

\maketitle

\begin{abstract}
This paper deals with solutions to the equation 
\begin{equation*}
-\Delta u = \lambda_+ \left(u^+\right)^{q-1} - \lambda_- \left(u^-\right)^{q-1} \quad \text{in $B_1$}
\end{equation*}
where $\lambda_+,\lambda_- > 0$, $q \in (0,1)$, $B_1=B_1(0)$ is the unit ball in $\R^N$, $N \ge 2$, and $u^+:= \max\{u,0\}$, $u^-:= \max\{-u,0\}$ are the positive and the negative part of $u$, respectively. 
We extend to this class of \emph{singular} equations the results recently obtained in \cite{SoTe2018} for  \emph{sublinear and discontinuous} equations, $1\leq q<2$, namely:  (a) the finiteness of the vanishing order at every point and the complete characterization of the order spectrum; (b) a weak non-degeneracy property; (c) regularity of the nodal set of any solution: the nodal set is a locally finite collection of regular codimension one manifolds up to a residual singular set having Hausdorff dimension at most $N-2$ (locally finite when $N=2$). As an intermediate step, we establish the regularity of a class of \emph{not necessarily minimal} solutions.

The proofs are based on a priori bounds, monotonicity formul\ae \ for a $2$-parameter family of Weiss-type functionals, blow-up arguments, and the classification of homogenous solutions. 
\end{abstract}

\section{Introduction}

The purpose of this paper is to study the local regularity of the solutions, and the structure of their nodal sets, for the equation 
\begin{equation}\label{eq}
-\Delta u = \lambda_+ (u^+)^{q-1} - \lambda_- (u^-)^{q-1}
\end{equation}
in the unit ball $B_1$ (or in any other open set), in the  \emph{singular case}, that is when $q \in (0,1)$, $N \ge 2$ and $\lambda_+, \lambda_->0$. We shall deal with nodal solutions: hence, as $q<1$, the right hand side is singular as $u$ approaches $0$. Therefore, solutions can not be intended in a classical, pointwise or weak senses but in a suitable one which will be specified below. 

Singular Lane-Emden equations like \eqref{eq}, and the properties of the nodal sets for their solutions, have been considered in the literature, mainly with a nonlinearity with opposite sign with respect to ours. One and two phases problems of type \eqref{eq} for $q \in (0,1)$, but with $\lambda_\pm\leq 0$, have been studied in \cite{AlPh, LiPe, MatPet, Ph, Ph2} (see also \cite{ArTe} for a fully nonlinear analogue). In this case, nontrivial solutions may vanish on open sets, and no unique continuation principle holds. Nonnegative solutions to singular semilinear elliptic equations of the type $-\Delta u=-u^{q-1}$ with $q <0$ (note the fact that $q$ varies in a different range than ours, and the opposite sign of the nonlinearity), and their vanishing sets, have been studied in connection with applications to electrostatic Micro-Electromechanical System (MEMS) devices or  Van der Waals force driven thin films of viscous fluids (see \cite{HeMaVe,JuLi,DaWaWe,EsGhGu} and references therein). The case $q<0$ differs substantially from $q \in (0,1)$, since the singularity do not appear only in the equation (as for $q \in (0,1)$), but also in the naturally associated functional. The boundary behavior of positive $H^1_0(\Omega)$-solutions to  $-\Delta u=\lambda(x)u^{-\nu}$ for $\nu>0$ was previously established in \cite{GuLi} under suitable smoothness assumptions on $\partial\Omega$ and $\lambda(x)$.

This paper is the natural continuation of our previous one \cite{SoTe2018} (see also \cite{Fo}), where we have faced \eqref{eq} in the sublinear and discontinuous cases, that is when $q \in (1,2)$ and $q=1$ respectively. In that paper we have proved the finiteness of the vanishing order at every zero point and provided the complete characterization of the order spectrum; moreover, we have shown how non-degeneracy holds at each zero point, yielding the  regularity of the nodal set of any solution: indeed we proved that the nodal set is a locally finite collection of regular codimension one manifolds up to a residual singular set having Hausdorff dimension at most $N-2$ (locally finite when $N=2$). Ultimately, the main features of the nodal set are strikingly similar to those of the solutions to linear (or superlinear) equations, with two remarkable differences: first of all, the admissible vanishing orders can not exceed the critical value $2/(2-q)$. At threshold, we found a multiplicity of homogeneous solutions, yielding the non-validity of any estimate of the $(N-1)$-dimensional measure of the nodal set of a solution in terms of the vanishing order.

Compared with the case $1\leq q<2$, a major difficulty of the singular one stays in the possible lack of  the $C^{1,\alpha}$ regularity of solutions (for every $0<\alpha<1$). Indeed, while when $q\in[1,2)$ the regularity of weak solutions is provided directly by elliptic estimates, when $0<q<1$ the regularity is a highly non-trivial issue and, in facts, may depend on the very same definition of solution to \eqref{eq}. Throughout this paper, we will consider the following notions.

\begin{definition}\label{def: good}
Let $\Omega \subset \R^N$ be open, and let $q \in (0,1)$. A function $u \in H^1_{\loc}(\Omega)$ is said to be a \emph{good solution} of \eqref{eq} in $\Omega$ if there exists a sequence $0 < \eps_n \to 0^+$ and corresponding sequences $\{u_n\} \subset H^1_{\loc}(\Omega) \cap C^0(\overline{\Omega})$, $\{f_n\}\subset L^\infty(\Omega)$ such that:
\begin{itemize}
\item[($i$)] for every compact set $K \subset \Omega$, we have $u_n \weak u$ in $H^1(K)$, $f_n\to 0$ almost everywhere,
 and $\{f_n\}$ is uniformly bounded in $L^\infty(\Omega)$;
\item[($ii$)] $u_n$ weakly solves
\[
-\Delta u_n + f_n= \lambda_+ \frac{u_n^+}{\left( \eps_n^2 + u_n^2\right)^\frac{2-q}2}-\lambda_- \frac{u_n^-}{\left( \eps_n^2 + u_n^2\right)^\frac{2-q}2} \quad \text{in $\Omega$}.
\]
\end{itemize}
\end{definition}

Notice that the approximating solutions $u_n$ are of class $C^{1,\alpha}$ for every $\alpha\in(0,1)$, since they solve a regular problem. The natural choice $f_n \equiv 0$ in point ($ii$) of the definition is admissible, but we allow for more generality in order to show that \emph{any local minimizer of the energy functional associated with \eqref{eq} is a good solutions}, see Proposition \ref{prop: min are good} below. We emphasize that all the boundary value problems associated with \eqref{eq} have variational structure, and hence it is very natural to consider class of solutions including minimizers as well as critical points with higher Morse index. In this perspective, we stress that the class of good solutions is in general much wider that the one of local minimizers. As an example, we will consider a homogeneous Dirichlet problem and prove the \emph{existence of infinitely many non-minimal good solutions} via variational methods, see Section \ref{sec: ex many}. 

Our first goal is the regularity of good solutions.

\begin{theorem}[Regularity of good solutions]\label{thm: reg good}
Let $q \in (0,1)$, $\lambda_+,\lambda_->0$, and let $u$ be a good solution to \eqref{eq} in $B_1$. Then $u \in C^{1,\alpha}_{\loc}(B_1)$ for every $0<\alpha<q/(2-q)$, and more precisely for any such $\alpha$
\[
u_n \to u \quad \text{in $C^{1,\alpha}_{\loc}(B_1)$, as $n \to \infty$}.
\] 
Furthermore, $u$ solves \eqref{eq} pointwise in $\{x \in B_1: u(x) \neq 0\}$, and the following domains variation formula holds:
\begin{equation}\label{var dom}
\int_{B_1} \langle dY [\nabla u], \nabla u\rangle- \div Y \left[ \frac12 |\nabla u|^2 - \frac{\lambda_+}q (u^+)^q - \frac{\lambda_-}q (u^-)^q \right] = 0
\end{equation}
for every vector field $Y \in C^\infty_c(B_1)$.
\end{theorem}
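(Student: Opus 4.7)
The overall plan is to upgrade the weak $H^1_{\loc}$ convergence $u_n\weak u$ to strong $C^{1,\alpha}_{\loc}$ convergence through uniform a priori estimates on the approximating sequence, and then to read off the pointwise equation on $\{u\neq 0\}$ and the Pohozaev-type identity \eqref{var dom} by passing to the limit in the approximating PDE. The starting point is the elementary bound
\[
G_\eps'(t):=\frac{t}{(\eps^2+t^2)^{(2-q)/2}},\qquad |G_\eps'(t)|\leq |t|^{q-1},\qquad t\,G_\eps'(t)\leq |t|^{q},
\]
uniform in $\eps>0$, which controls the approximate source by the same scale-invariant quantity $|u_n|^{q-1}$ that appears in the limit equation.

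First I would establish a uniform $L^\infty_{\loc}$ bound on $u_n$ by a Moser-type iteration: testing against $|u_n|^{\beta}u_n$ and using $u_n\cdot\mathrm{RHS}_n\leq C|u_n|^{q}$ together with the $L^\infty$ bound on $f_n$ and the $L^{2^*}$ bound inherited from $H^1$ permits the standard bootstrap to give $\|u_n\|_{L^\infty(K)}\leq C(K)$. Once $u_n\in L^\infty_{\loc}$ uniformly, the pointwise majorant $|G_{\eps_n}'(u_n)|\leq |u_n|^{q-1}$ and classical $L^p$ elliptic theory yield a first uniform bound $\|u_n\|_{C^{1,\beta_0}_{\loc}}\leq C$ for some small $\beta_0>0$. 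To reach the sharp exponent $\alpha<q/(2-q)$ I would argue by contradiction, via a blow-up: if uniform $C^{1,\alpha}$ bounds failed, there would exist points $x_n$ and scales $r_n\to 0$ such that the renormalized functions
\[
\tilde u_n(z):=\frac{u_n(x_n+r_n z)-u_n(x_n)-r_n\nabla u_n(x_n)\cdot z}{M_n\,r_n^{1+\alpha}},\qquad M_n\to\infty,
\]
are bounded and, thanks to the preliminary $C^{1,\beta_0}$ bound, converge along a subsequence to a non-trivial profile $\tilde u_\infty$. The PDE satisfied by $\tilde u_n$ has nonlinear part weighted by $r_n^{\,q-\alpha(2-q)}\to 0$, since $\alpha<q/(2-q)$, so $\tilde u_\infty$ is harmonic; the normalization forces it to be $(1+\alpha)$-homogeneous, and since $1+\alpha\in(1,2)\setminus\N$ no such nontrivial harmonic function exists -- contradiction.

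Given uniform $C^{1,\alpha}_{\loc}$ bounds, Ascoli-Arzel\`a upgrades $u_n\weak u$ in $H^1_{\loc}$ to $u_n\to u$ in $C^{1,\beta}_{\loc}$ for every $\beta<\alpha$, and since $\alpha<q/(2-q)$ is arbitrary this gives the stated $C^{1,\alpha}_{\loc}$ convergence. On any compact set $K\subset\{u\neq 0\}$, $u_n$ is eventually bounded away from zero so $G_{\eps_n}'(u_n)\to |u|^{q-1}\mathrm{sgn}(u)$ uniformly on $K$, and the pointwise equation follows by passing to the limit. The domain variation identity \eqref{var dom} is obtained by multiplying the smooth approximating PDE by $Y\cdot\nabla u_n$, integrating by parts to recast the source contribution as $\div Y\cdot(\lambda_+G_{\eps_n}(u_n^+)+\lambda_-G_{\eps_n}(u_n^-))$ up to the expected $\langle dY[\nabla u_n],\nabla u_n\rangle$ and kinetic terms, and then sending $n\to\infty$ by $C^1$ convergence of $u_n$ together with the dominated pointwise convergence $G_{\eps_n}(t)\to |t|^{q}/q$.

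The hardest step is the sharp $C^{1,\alpha}$ bound at the threshold $\alpha<q/(2-q)$: elementary elliptic theory only yields some positive Hölder exponent, while $q/(2-q)$ is the critical scaling exponent of the equation, attained on the explicit $2/(2-q)$-homogeneous profiles. Executing the blow-up argument at the level of the sequence $u_n$ rather than only of the limit $u$, and handling the singular term $|u_n|^{q-1}$ near the -- a priori complicated -- vanishing set of $u_n$, is the most delicate technical point.
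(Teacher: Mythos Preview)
Your proposal has the right overall architecture (uniform bounds, then compactness, then pass to the limit in the equation and the domain variation identity), and the last two paragraphs are essentially the paper's argument. But the core step --- getting uniform $C^{1,\alpha}$ bounds on the approximating sequence --- has a genuine gap.

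The sentence ``the pointwise majorant $|G_{\eps_n}'(u_n)|\leq |u_n|^{q-1}$ and classical $L^p$ elliptic theory yield a first uniform bound $\|u_n\|_{C^{1,\beta_0}_{\loc}}\leq C$'' does not work: since $q-1<0$, the function $|u_n|^{q-1}$ blows up on the nodal set of $u_n$, and there is no a priori reason it lies in any $L^p$ uniformly in $n$. (Equivalently, $\|g_{\eps_n}\|_{L^\infty}\sim\eps_n^{q-1}\to\infty$.) So Calder\'on--Zygmund gives nothing uniform, and you have no preliminary $C^{1,\beta_0}$ platform on which to run your blow-up. This is precisely why the singular case $q\in(0,1)$ is harder than $q\in[1,2)$. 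The paper circumvents this by a \emph{two-stage} contradiction/blow-up: first a blow-up at the $C^{0,\alpha}$ level (Section~2), which needs only the $L^\infty_{\loc}$ bound and delivers uniform H\"older continuity; then a second blow-up at the $C^{1,\alpha}$ level (Section~3), which can now use the $C^{0,\alpha}$ bound as input.

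Even granting a preliminary bound, your blow-up conclusion is too quick in two places. First, ``so $\tilde u_\infty$ is harmonic'' does not follow from $r_n^{\,q-\alpha(2-q)}\to 0$: the prefactor vanishes, but it multiplies a term behaving like $|\,\cdot\,|^{q-1}$ near the nodal set of the blow-up, which may be unbounded. One only gets harmonicity on $\{\tilde u_\infty\neq 0\}$ (or away from the limiting nodal hyperplane); extending it across the nodal set is the crux. The paper does this by upgrading to \emph{strong} $H^1_{\loc}$ convergence (comparing the derivative of $r\mapsto\int_{S_r}w^2$ for the sequence and the limit), passing to the limit in the domain variation identity, and then invoking the result of Tavares--Terracini to conclude global harmonicity. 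Second, ``the normalization forces it to be $(1+\alpha)$-homogeneous'' is not justified by your setup; the paper instead uses that the limit has globally bounded $[\nabla\cdot]_{C^{0,\alpha}}$ seminorm (hence strictly subquadratic growth) and non-constant gradient, and concludes by Liouville. You would need a monotonicity formula to force homogeneity, and none is available at this stage.
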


\begin{remark}
In facts, we shall prove a stronger results, namely the validity of uniform $C^{1,\alpha}$ bounds, both for approximating and families of good solutions, entailing, as a by product, useful compactness properties, which we will apply to blow-up sequences (Theorem \ref{prop: bounds}). Inspired by the results in \cite{AlPh, Ph, GiGi, LiSi}, where energy minimizing solutions are considered, it is natural to expect that the optimal regularity of good solutions is $C^{1,q/(2-q)}$, which coincides with the natural scaling of the equation. Our theorem, thus, provides a suboptimal result, which could be extended to the optimal one by the arguments in section 6  of \cite{LiSi}.  One of the main feature of our approach is that, as in \cite{SoTe2018}, we shall study the regularity of the nodal set without using the optimal regularity of the solution.
\end{remark}

Besides the application to the study of the nodal set, Theorem \ref{thm: reg good} is interesting per s\'e since it is the first regularity result applying to solutions to \eqref{eq} which are not minimizers, even in a local sense. The optimal regularity of local minimizers follows by the main result in \cite{GiGi} (see also \cite{LiSi}), but the methods there heavily rely on the minimality. The proof of Theorem \ref{thm: reg good} is instead based on a priori bounds on solutions obtained by exploiting a blow-up argument combined with Liouville-type theorems, with a technique inspired by the one introduced in \cite{NoTaTeVe}, and readapted in \cite{DaWaWe}.

Since good solutions of \eqref{eq} are of class $C^{1,\alpha}$, it makes sense to subdivide the nodal set $Z(u):= u^{-1}(\{0\})$ into its \emph{regular part} $\cR(u)$
\[
\cR(u):=\{ x \in B_1: u(x) = 0 \text{ and } \nabla u(x) \neq 0\},
\] 
and its \emph{singular part}
\[
\Sigma(u):= \{ x \in B_1: u(x) = |\nabla u(x)| = 0\};
\]
$\cR(u)$ is in fact locally a $C^{1,\alpha}$ $(N-1)$-dimensional hypersurface by the implicit function theorem, and hence the study of the structure of $Z(u)$ is reduced to that of $\Sigma(u)$. This was the starting point in the analysis carried out in \cite{SoTe2018}. Accordingly, we introduce the notion of vanishing order of a solution in a point, quantifying the rate of growth of $u$ on spheres of varying radii.  To this aim, here and in what follows, for $x_0 \in B_1$ and $0<r<\dist(x_0,\pa B_1)$, we denote $S_r(x_0) = \pa B_r(x_0)$, where $B_r(x_0)$ is the ball of center $x_0$ and radius $r$ (in the frequent case $x_0=0$, we simply write $B_r$ and $S_r$ for the sake of brevity). 

\begin{definition}\label{def: order V}
Let $u$ be a solution to \eqref{eq}, and let $x_0 \in Z(u)$. The \emph{vanishing order of $u$ in $x_0$} is defined as the number $\cV(u,x_0) \in \R^+$ with the property that 
\[
\limsup_{r \to 0^+} \frac1{r^{N-1+2\beta}} \int_{S_r(x_0)} u^2  = \begin{cases} 0 & \text{if $0 <\beta< \cV(u,x_0)$} \\
+\infty  & \text{if $\beta > \cV(u,x_0)$}.
\end{cases}
\]  
If no such number exists, then 
\[
\limsup_{r \to 0^+} \frac1{r^{N-1+2 \beta}} \int_{S_r(x_0)} u^2 = 0 \quad \text{for any $\beta>0$},
\]
and we set $\cV(u,x_0)=+\infty$. 
\end{definition}

As in \cite{SoTe2018} (see Theorems 1.3 and 1.4 therein), we can completely characterize the admissible vanishing orders, and prove at the same time a fundamental non-degeneracy condition.

\begin{theorem}[Classification of the vanishing orders and non-degeneracy]\label{thm: very strong V}
Let $0 < q <1$, $\lambda_+, \lambda_->0$, $u$ be a good solution to \eqref{eq}, and let $x_0 \in Z(u)$. Then 
\[
\cV(u,x_0) \in \left\{1,\frac2{2-q} \right\},
\]
and moreover 
\[
\liminf_{r \to 0^+} \frac{1}{r^{N-1+2 \cV(u,x_0)}} \int_{S_r(x_0)} u^2>0.
\]
\end{theorem}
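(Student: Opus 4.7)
The plan is to adapt the strategy of \cite{SoTe2018}, combining a Weiss-type monotonicity formula with a blow-up analysis and a rigidity statement for homogeneous solutions. A preliminary observation: since $u \in C^{1,\alpha}_{\loc}$ for every $\alpha < q/(2-q)$ by Theorem \ref{thm: reg good}, if $\nabla u(x_0) \neq 0$ then $\cV(u,x_0) = 1$ and both conclusions are immediate; while if $\nabla u(x_0) = 0$, then $|u(x)| \le C|x-x_0|^{1+\alpha}$ for every such $\alpha$, which forces $\cV(u,x_0) \ge 2/(2-q)$. The theorem thus reduces to showing that, in this latter case, $\cV(u,x_0) = 2/(2-q)$ together with the non-degeneracy bound.

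To this end, with $\alpha := 2/(2-q)$, I would consider the Weiss-type functional
\[
W(r) := \frac{1}{r^{N-2+2\alpha}}\!\int_{B_r(x_0)}\!\!\left(\tfrac{1}{2}|\nabla u|^2 - \tfrac{\lambda_+}{q}(u^+)^q - \tfrac{\lambda_-}{q}(u^-)^q\right) - \frac{\alpha}{r^{N-1+2\alpha}}\!\int_{S_r(x_0)}\!\!u^2.
\]
Applying the domain variation formula \eqref{var dom} to a radial vector field $Y(x) = \eta_r(|x-x_0|)(x-x_0)$ with $\eta_r \to \mathbf{1}_{[0,r]}$, the critical scaling makes the interior potential contribution drop out and leaves
\[
W'(r) = \frac{2}{r^{N-2+2\alpha}}\int_{S_r(x_0)}\!\!\left(\partial_\nu u - \tfrac{\alpha}{r} u\right)^{\!2} \ge 0,
\]
so $W$ is monotone non-decreasing. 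Since \eqref{var dom} is only known for good solutions, I would first derive the identity at the level of the approximations $u_n$ of Definition \ref{def: good} (where the regularized right-hand side is smooth and a clean Pohozaev-type computation applies) and then pass to the limit using the $C^{1,\alpha}$ convergence of Theorem \ref{thm: reg good} together with $|u_n|^q \in L^\infty_{\loc}$, which is guaranteed by Theorem \ref{prop: bounds} and by $q > 0$.

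Next, setting $\gamma := \cV(u,x_0)$, $H(r) := r^{1-N}\!\int_{S_r(x_0)}\! u^2$, and $v_r(x) := u(x_0 + rx)/\sqrt{H(r)}$, the monotonicity of $W$ together with the $C^{1,\alpha}$ estimates of Theorem \ref{prop: bounds} and a doubling property $H(2r) \le CH(r)$ yield precompactness of $\{v_r\}$ in $C^{1,\alpha'}_{\loc}(\R^N)$ for every $\alpha' < q/(2-q)$. Extracting a subsequence $r_n \to 0^+$, one obtains $v_{r_n} \to v_0$ with $\|v_0\|_{L^2(S_1)} = 1$, and $v_0$ is $\gamma$-homogeneous: the bound $W(r_n\rho) - W(r_n) \to 0$ for each $\rho > 0$ forces the square in $W'$ to vanish in the limit. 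The rescaled equation reads $-\Delta v_r = H(r)^{(q-2)/2}\,r^2\,[\lambda_+(v_r^+)^{q-1} - \lambda_-(v_r^-)^{q-1}]$, whose prefactor diverges to $+\infty$ as $r \to 0^+$ whenever $\gamma > 2/(2-q)$: indeed, the definition of $\cV$ gives $H(r) = o(r^{2\beta})$ for some $\beta \in (2/(2-q), \gamma)$, and therefore $H(r)^{(q-2)/2} r^2 \ge c\,r^{2 - \beta(2-q)} \to +\infty$.

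If $\gamma > 2/(2-q)$, testing the rescaled equation against $\phi \in C^\infty_c(\{v_0 > c\})$ for any $c > 0$ gives a bounded left-hand side and a right-hand side diverging to $+\infty$, so $|\{v_0 > 0\}| = |\{v_0 < 0\}| = 0$; hence $v_0 \equiv 0$, contradicting $\|v_0\|_{L^2(S_1)} = 1$. Therefore $\gamma = 2/(2-q)$, and $v_0$ is a nontrivial $\gamma$-homogeneous good solution of \eqref{eq} on $\R^N$. Non-degeneracy follows: $\|v_0\|_{L^2(S_1)}^2 = 1$ gives $H(r_n)/r_n^{2\gamma} \to 1$, while the map $r \mapsto H(r)/r^{2\gamma}$ is monotone non-decreasing (a consequence of $W'\ge 0$ and the standard computation $H'(r) = 2\,r^{1-N}\!\int_{S_r(x_0)} u\,\partial_\nu u$), which upgrades the subsequential limit to $\liminf_{r \to 0^+} H(r)/r^{2\gamma} > 0$. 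The main technical obstacle throughout is the rigorous handling of the singular nonlinearity on $Z(u)$; this is resolved by systematically performing every integral identity at the level of the smooth approximations $u_n$ of Definition \ref{def: good} and then letting $n \to \infty$.
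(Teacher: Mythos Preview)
Your strategy is close in spirit to the paper's, but there is a genuine circularity in the compactness step. You invoke Theorem~\ref{prop: bounds} to obtain $C^{1,\alpha'}_{\loc}$ precompactness of the blow-ups $v_r$, yet under hypothesis (h2) that theorem requires the coefficients $\alpha_n = (r_n^{2/(2-q)}/\|u\|_{x_0,r_n})^{2-q}$ to be bounded, i.e.\ $\|u\|_{x_0,r_n} \gtrsim r_n^{2/(2-q)}$. In the case $\cV(u,x_0) > 2/(2-q)$ --- precisely the one you are trying to exclude --- this fails, $\alpha_n \to \infty$, and no uniform bound on $v_r$ is available; the contradiction argument therefore never gets off the ground. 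The paper avoids this trap by establishing the upper bound on the vanishing order and the non-degeneracy \emph{before} any blow-up, working directly with the two-parameter Weiss family $W_{\gamma,t}$ and the frequency $N_t$ (Subsections~\ref{sub: weiss}--\ref{sec: strong and non-deg}, following \cite[Theorems 4.1--4.2]{SoTe2018}); only afterwards, once non-degeneracy guarantees $\alpha_n$ bounded, is Theorem~\ref{prop: bounds} invoked for blow-up compactness.

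There are also smaller issues. Your Weiss functional is mis-normalized: with bulk density $\tfrac12|\nabla u|^2 - \tfrac{1}{q}F_{\lambda_+,\lambda_-}(u)$ the boundary weight should be $\alpha/2$, not $\alpha$; as written, the derivative equals $r^{-(N-2+2\alpha)}\int_{S_r}(\partial_\nu u - \tfrac{\alpha}{r}u)(\partial_\nu u - \tfrac{2\alpha}{r}u)$, which has no sign. Even after correcting this, the single critical-scaling Weiss is not obviously bounded below as $r\to 0^+$, since regularity only yields $|u|\le C_\eps r^{2/(2-q)-\eps}$ --- this is one reason the paper needs the full two-parameter family rather than a single $W$. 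Finally, your non-degeneracy step is off: $\|v_0\|_{L^2(S_1)}=1$ is automatic from the normalization of $v_r$ and says nothing about $H(r)/r^{2\gamma}$, and the monotonicity of $r\mapsto H(r)/r^{2\gamma}$ does not follow from $W'\ge 0$ alone but from a lower bound on the Almgren frequency, which you have not established.
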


Note that the finiteness of the vanishing order imply the validity of the \emph{strong unique continuation principle} for the singular Lane-Emden equation, similarly to that recently established in the sublinear and discontinuous cases
\cite{SoWe,Ru,SoTe2018}. The validity of such results strongly depend on the $-$ sign in front of the Laplacian in equation \eqref{eq} (or, equivalently, on the sign of the nonlinearity). Indeed, solutions to
\[
\Delta u = \lambda_+ (u^+)^{q-1} - \lambda_- (u^-)^{q-1}, \qquad \lambda_{\pm}>0, \quad q \in (0,2),
\]
can exhibit dead cores.

Theorem \ref{thm: very strong V}, together with uniform $C^{1,\alpha}$ bounds of blow-up seqences and Weiss type monotonicity formul\ae, allow us to prove existence of homogeneous blow-ups. 

\begin{theorem}[Blow-ups]\label{thm: blow-up}
Let $0 < q <1$, $\lambda_+, \lambda_->0$, $u$ be a good solution to \eqref{eq}, $x_0 \in Z(u)$, and $R \in (0, \dist(x_0,\pa B_1))$. Then the following alternative holds:
\begin{itemize}
\item[($i$)] if $\cV(u,x_0)=1$ then $\nabla u(x_0)\neq 0$, and there exists a function $\Gamma_{x_0}$, such that
\[
u(x) =\langle \nabla u(x_0),x-x_0\rangle + \Gamma_{x_0}(x) \qquad \text{in $B_R(x_0)$},
\]
with 
\[
\begin{cases} \Gamma_{x_0}(x)| \le C |x-x_0|^{1+\alpha}  \\ 
|\nabla \Gamma_{x_0}(x)| \le C |x-x_0|^{\alpha} \end{cases}
 \qquad \text{in $B_R(x_0)$}
\]
for a suitable constant $C$ depending on $\alpha\in (0,q/(2-q))$;
\item[($ii$)] if $\cV(u,x_0) = 2/(2-q)$ then $\nabla u(x_0)= 0$, and, for every sequence $0<r_n \to 0^+$, we have, up to a subsequence,
\[
\frac{u(x_0+r_n x)}{\left( \frac{1}{r^{N-1}} \int_{S_r(x_0)} u^2\right)^\frac12}
 \to \bar u \qquad \text{in $C^{1,\alpha}_{\loc}(\R^N)$ for every }0<\alpha<\frac{q}{2-q},
\]
where $\bar u$ is non-trivial and $2/(2-q)$-homogeneous, solves point-wisely
\[
-\Delta \bar u= \mu \left( \lambda_+ (\bar u^+)^{q-1} -\lambda_- (\bar u^-)^{q-1} \right) \quad \text{in $\R^N \setminus \{\bar u=0\}$}
\]
for some $\mu > 0$, and satisfies the variation of domain formula
\[
\int_{\R^N} \langle dY [\nabla \bar u], \nabla \bar u\rangle- \div Y \left[ \frac12 |\nabla \bar u|^2 - \frac{\mu \lambda_+}q ({\bar{u}}^+)^q - \frac{\mu \lambda_-}q ({\bar{u}}^-)^q \right] = 0
\]
for every $Y \in C^\infty_c(\R^N,\R^N)$.
\end{itemize}
\end{theorem}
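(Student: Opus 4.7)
For case $(i)$, the claimed expansion is immediate from the $C^{1,\alpha}_{\loc}$ regularity of $u$ provided by Theorem \ref{thm: reg good}. Since $u(x_0)=0$, I would simply set $\Gamma_{x_0}(x) := u(x)-\langle \nabla u(x_0), x-x_0\rangle$; then $\Gamma_{x_0}$ vanishes together with its gradient at $x_0$, and the stated pointwise bounds follow from the H\"older continuity of $\nabla u$ by writing $\Gamma_{x_0}(x) = \int_0^1 \langle \nabla u(x_0+t(x-x_0))-\nabla u(x_0), x-x_0\rangle\,dt$. The condition $\nabla u(x_0)\neq 0$ follows by contradiction: if $\nabla u(x_0)=0$, then $|u(x)|\le C|x-x_0|^{1+\alpha}$, whence $r^{-(N-1)}\int_{S_r(x_0)}u^2 \le Cr^{2+2\alpha}$, forcing $\cV(u,x_0)\ge 1+\alpha>1$, against the hypothesis.

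For case $(ii)$, the fact that $\nabla u(x_0)=0$ is verified in the same spirit: a nonzero gradient would yield, by Taylor expansion, $r^{-(N-1)}\int_{S_r(x_0)}u^2 = c|\nabla u(x_0)|^2 r^2 + o(r^2)$, incompatible with the vanishing of the $\limsup$ at the exponent $\beta=1<\cV=2/(2-q)$ imposed by Definition \ref{def: order V}. I would then introduce the $L^2$-normalization $L(r):=\bigl(r^{-(N-1)}\int_{S_r(x_0)}u^2\bigr)^{1/2}$ and the blow-up family
\begin{equation*}
v_r(x) := \frac{u(x_0+rx)}{L(r)},
\end{equation*}
so that $\|v_r\|_{L^2(S_1)}=1$ by construction. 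The non-degeneracy in Theorem \ref{thm: very strong V} gives $L(r)\ge c\,r^{\cV}$; the matching upper bound $L(r)\le C\,r^{\cV}$ I would extract from the Weiss-type monotonicity formula already exploited in the classification of vanishing orders. A direct scaling computation then shows that $v_r$ solves, pointwise on $\{v_r \neq 0\}$,
\begin{equation*}
-\Delta v_r = \mu_r\bigl(\lambda_+(v_r^+)^{q-1}-\lambda_-(v_r^-)^{q-1}\bigr),\qquad \mu_r := r^{2} L(r)^{q-2},
\end{equation*}
with $\mu_r$ staying in a compact subinterval of $(0,+\infty)$.

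Given $r_n\to 0^+$, along a subsequence $\mu_{r_n}\to\mu>0$. A diagonal extraction from the approximating family $\{u_k\}$ associated to the good solution $u$, with regularization rescaled from $\eps_k$ to $\eps_k/L(r_n)$, realizes each $v_{r_n}$ as a good solution, on larger and larger balls, of the rescaled equation. The uniform $C^{1,\alpha}$ estimates of Theorem \ref{prop: bounds} then furnish compactness, yielding along a further subsequence $v_{r_n}\to \bar u$ in $C^{1,\alpha}_{\loc}(\R^N)$; passing to the limit in the PDE satisfied by $v_{r_n}$ and in the domain-variation identity \eqref{var dom} recovers the properties stated in $(ii)$, while the normalization guarantees $\bar u \not\equiv 0$.

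The last step, the homogeneity of $\bar u$, is again a consequence of the Weiss monotonicity formula: the monotone character produces a finite limit $W(0^+)$ as $r\to 0^+$, which after rescaling transfers to a Weiss functional for $\bar u$ that is constant in the radial variable; since the radial derivative of such functional controls in $L^2$-sense the deviation of $\bar u$ from its $\cV$-homogeneous extension, its vanishing forces $\bar u$ to be homogeneous of degree $2/(2-q)$. I expect the main obstacle to be the coordination between the two scales at play, the regularization parameter $\eps_k$ and the blow-up radius $r_n$, which must be sent to zero along a diagonal subsequence in a way that preserves simultaneously the $C^{1,\alpha}$ bounds and the good-solution structure of the rescaled limits; once this is handled, the remainder of the argument closely parallels the scheme already developed in \cite{SoTe2018} for the sublinear and discontinuous cases.
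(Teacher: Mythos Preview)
Your outline for case $(i)$ is fine and matches the paper (Proposition \ref{thm: blow-up pre}). For case $(ii)$, however, the ``main obstacle'' you anticipate is self-inflicted, while the actual obstacles are not addressed.

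First, the coordination between the regularization scale $\eps_k$ and the blow-up scale $r_n$ is entirely unnecessary. You propose to view each $v_{r_n}$ as a good solution by rescaling the approximating sequence and then diagonalize. But Theorem \ref{prop: bounds} has two alternative hypotheses precisely so that this can be avoided: once $u$ is known to be a $C^{1,\alpha}$ stationary solution (Theorem \ref{thm: reg good}), the blow-ups $v_{r_n}$ fall directly under hypothesis (h2), with no $\eps$ in sight. The paper follows this route (see Remark \ref{rmk: on bounds (0,1)} and Lemma \ref{lem: weak to strong}).

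Second, you have not addressed what is in fact the main work: to invoke Theorem \ref{prop: bounds} under (h2) one must first verify that $\{v_{r_n}\}$ is bounded in $H^1(B_\rho)$ for arbitrary $\rho>1$, i.e.\ a doubling-type estimate $\|u\|_{x_0,\rho r}\le C_\rho\|u\|_{x_0,r}$. This is not automatic and is the content of Lemma \ref{lem: v_r bdd} in the paper; its proof is a nontrivial contradiction argument that produces a nontrivial harmonic limit with a dead core, ruled out by unique continuation (via \cite{TavTer}). Your proposal skips this step entirely.

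Third, your claim that the upper bound $L(r)\le C\,r^{2/(2-q)}$ ``follows from the Weiss-type monotonicity formula'' is not justified, and the paper does not proceed this way. Instead, the paper allows for the possibility $\|u\|_{x_0,r_n}/r_n^{\gamma_q}\to+\infty$ (equivalently $\mu_{r_n}\to 0$) and rules it out \emph{a posteriori}: in that case the limit $\bar u$ would be harmonic in $\R^N$ (again via \cite{TavTer}), with Almgren frequency $\ge 2$ by harmonicity but $\le \gamma_q<2$ by the Weiss estimate --- a contradiction. Only then is $\mu>0$ secured.

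In short: drop the diagonal argument, use (h2) directly, supply the doubling lemma, and replace the claimed upper bound on $L(r)$ with the Liouville-type exclusion of the degenerate case $\mu=0$.
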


As in \cite{SoTe2018}, alternative $(ii)$ of Theorem \ref{thm: blow-up} allows us to estimate the Hausdorff dimension of the singular set, via Federer's dimension reduction principle, after a classification of such homogenous profiles.

\begin{theorem}[Hausdorff dimension of nodal and singular set]\label{thm: Hausdorff}
Let $0 < q <1$, $\lambda_+, \lambda_->0$, $u$ be a good solution to \eqref{eq}, and $x_0 \in Z(u)$. The nodal set $Z(u)$ has Hausdorff dimension $N-1$, and is composed into a regular part $\cR(u)$, locally a $C^{1,\alpha}$ $(N-1)$-dimensional hypersurface, and a singular subset $\Sigma(u)$ with Hausdorff dimension at most $N-2$. Furthermore, if $N=2$ the singular set $\Sigma(u)$ is discrete.
\end{theorem}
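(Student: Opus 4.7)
The strategy is the one used in \cite{SoTe2018}, fed with the new ingredients provided by Theorems \ref{thm: reg good}, \ref{thm: very strong V}, and \ref{thm: blow-up}. At any $x_0\in\cR(u)$ one has $u(x_0)=0$ and $\nabla u(x_0)\neq 0$, while $u\in C^{1,\alpha}_{\loc}$ by Theorem \ref{thm: reg good}; hence the implicit function theorem presents $\cR(u)$ locally as a $C^{1,\alpha}$ hypersurface of dimension $N-1$. The decomposition $Z(u)=\cR(u)\cup\Sigma(u)$ is tautological, and the statement $\dim_\cH Z(u)=N-1$ will follow once $\dim_\cH \Sigma(u)\leq N-2$ is established (and $\cR(u)\neq\emptyset$, which is the case as soon as $u$ has at least one zero of order one).

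To bound $\dim_\cH\Sigma(u)$ I would invoke Federer's dimension reduction principle. The first ingredient is that at every $x_0\in\Sigma(u)$ Theorem \ref{thm: very strong V} forces $\cV(u,x_0)=2/(2-q)$, so Theorem \ref{thm: blow-up}(ii) produces, along subsequences, a non-trivial $2/(2-q)$-homogeneous global solution $\bar u$ as a $C^{1,\alpha}_{\loc}$ limit of rescalings, satisfying the limit equation and the associated variation of domain formula. The second ingredient is that the family of such blow-up profiles is closed under translations, rescalings and secondary blow-up (the Weiss-type monotonicity keeps higher order blow-ups homogeneous), and that translation invariance of $\bar u$ along a $k$-dimensional subspace reduces it to a non-trivial homogeneous solution of the same equation in $\R^{N-k}$. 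The standard Federer stratification then gives $\dim_\cH \Sigma(u)\leq N-j^*$, with $j^*$ the smallest dimension in which a non-trivial $2/(2-q)$-homogeneous solution of the limit equation exists.

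The key classification step is $j^*\geq 2$: no non-trivial $2/(2-q)$-homogeneous solution exists in dimension one. Indeed, a $2/(2-q)$-homogeneous function on $\R$ has the form $w(t)=c_+(t^+)^{2/(2-q)}-c_-(t^-)^{2/(2-q)}$ with $c_\pm\in\R$; setting $\alpha=2/(2-q)>1$ and noting $\alpha(\alpha-1)>0$, a direct coefficient comparison in the ODE $-w''=\mu(\lambda_+(w^+)^{q-1}-\lambda_-(w^-)^{q-1})$ on each of $\{t>0\}$ and $\{t<0\}$ shows that the sign of $-w''$ is opposite to the one prescribed by the nonlinearity when $\mu,\lambda_\pm>0$, so that only $c_\pm=0$ is admissible. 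Consequently the spine of any blow-up at a singular point has dimension at most $N-2$, which yields $\dim_\cH \Sigma(u)\leq N-2$ and therefore $\dim_\cH Z(u)=N-1$.

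When $N=2$, the bound $\dim_\cH \Sigma(u)\leq 0$ must be promoted to local discreteness. I would argue by contradiction: suppose $x_n\to x_0$ is a sequence of distinct points of $\Sigma(u)$ and set $r_n=|x_n-x_0|\to 0^+$. By Theorem \ref{thm: blow-up}(ii) the corresponding normalized rescalings converge in $C^{1,\alpha}_{\loc}(\R^2)$, along a subsequence, to a non-trivial $2/(2-q)$-homogeneous planar solution $\bar u$. The points $\bar x_n:=(x_n-x_0)/r_n\in\S^1$ are singular for the rescaled problems, so by $C^1$ convergence they accumulate to some $\bar x\in\S^1\cap\Sigma(\bar u)$; a planar classification rules out the existence of any singular point of $\bar u$ off the origin, giving a contradiction. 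The main obstacle I expect is precisely this planar classification: in the singular setting it requires a careful analysis of the ODE for the angular profile together with the two-parameter Weiss identity mentioned in the introduction, in the spirit of the corresponding step of \cite{SoTe2018}.
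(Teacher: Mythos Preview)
Your overall architecture---Federer reduction feeding on Theorems \ref{thm: reg good}, \ref{thm: very strong V}, and \ref{thm: blow-up}---matches the paper's proof. The difference lies in the key one-dimensional classification step. The paper does \emph{not} use the explicit homogeneous ansatz: instead it observes that any one-dimensional stationary profile $w$ satisfies the variation of domain formula, which in one variable is precisely conservation of the Hamiltonian
\[
\cH(w,w')=\tfrac12|w'|^2+\tfrac{\mu\lambda_+}{q}(w^+)^q+\tfrac{\mu\lambda_-}{q}(w^-)^q,
\]
and since the only level curve of $\cH$ through the origin of the phase plane is the equilibrium, $w(0)=w'(0)=0$ forces $w\equiv 0$. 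Your argument is different and in fact more elementary: once one knows the blow-up degree is exactly $2/(2-q)$ (which Theorem \ref{thm: blow-up} guarantees at every stage of the reduction), the direct sign comparison between $-w''$ and the nonlinearity on each half-line kills the coefficients $c_\pm$ without ever invoking the domain variation formula. What the paper's Hamiltonian route buys is robustness with respect to the homogeneity exponent (it only needs $w(0)=w'(0)=0$); what yours buys is that it avoids the detour through \eqref{var dom} entirely.

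For $N=2$ you are overestimating the remaining difficulty. No separate ``planar classification'' of the angular ODE is needed: in your own contradiction argument, once you obtain a $2/(2-q)$-homogeneous blow-up $\bar u$ with a singular point $\bar x\in\S^1$, a \emph{secondary} blow-up of $\bar u$ at $\bar x$ is, by homogeneity of $\bar u$ at the origin, translation invariant along the direction $\bar x$. This reduces you to a nontrivial one-dimensional $2/(2-q)$-homogeneous profile, which you have already shown cannot exist. This is exactly the mechanism the paper invokes by referring back to the $N=2$ argument of \cite{SoTe2018}; the two-parameter Weiss identity plays no additional role here.
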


Focusing on the range $q \in (0,1)$, we point out that up to our knowledge this is the first result regarding the structure of the nodal set of \emph{sign-changing} and \emph{not necessarily minimal} solutions to singular equations, \emph{holding in arbitrary dimension}. Previous results concern nonnegative minimizers \cite{AlPh, Ph2}, or sign-changing minimizers in dimension $2$ \cite{LiPe}. More in details, Phillips \cite{Ph2} and Alt and Phillips \cite{AlPh} considered minimizers of the one phase problem
\begin{equation}\label{AlPh}
\Delta u = u^{q-1}, \qquad u \ge 0,
\end{equation}
and showed that the free boundary $\Gamma^+:= \pa \{u>0\}$ is $C^\infty$, up to a singular set $\Sigma$ of zero $(N-1)$-dimensional Hausdorff measure. Moreover, they showed that in dimension $N=2$ the singular set $\Sigma$ is empty. Lindgren and Petrosyan proved that for minimizers of the two phases problem
\begin{equation}\label{LiPe}
\Delta u =\lambda_+ (u^+)^{q-1} - \lambda_-(u^-)^{q-1}, \qquad \lambda_{\pm}>0
\end{equation}
(this is \eqref{eq} with the opposite sign in front of the Laplacian) in dimension $2$, the free boundaries $\pa \{u>0\}=\Gamma^+$ and $\pa \{u<0\}=\Gamma^-$ are both $C^1$ regular, and described their behavior close to branch points. We point out that solutions to both \eqref{AlPh} and \eqref{LiPe} in general have dead core (that is, $\{u=0\}$ has non-empty interior), due to the different sign of the nonlinearity. Thus, comparing these results with Theorem \ref{thm: Hausdorff}, it emerges a deep difference between the \emph{stable case} ($+$ in front of $\Delta$) and the \emph{unstable} one ($-$ in front of $\Delta$). This was already pointed out for the range $q \in [1,2)$, compare for instance the results in \cite{ShUrWe, ShWe} and in \cite{SoTe2018}.

\subsection{Structure of the paper}

The first part of the paper will be devoted to the proof of a priori bounds for solutions to equations of type, which will imply Theorem \ref{thm: reg good} as a corollary. Once that Theorem \ref{thm: reg good} is proved, so that we have $C^{1,\alpha}$ regularity and the variation of domain formula \eqref{var dom}, Theorems \ref{thm: very strong V}-\ref{thm: Hausdorff} follow suitably modifying the argument already presented in details in \cite{SoTe2018} for the sublinear case $q \in [1,2)$. We shall only describe the necessary modifications, and this will be the content of Section \ref{sec: changes}. The key point is that from many point of view formula \eqref{var dom} replace equation \eqref{eq} in a singular framework. This idea was already used in the literature e.g. in \cite{DaWaWe, LiPe, TavTer}, and motivate the following definition, partially borrowed from \cite{DaWaWe}:
\begin{definition}
We write that $u \in H^1_{\loc}(\Omega) \cap C^0(\Omega)$ is a \emph{stationary solution} to \eqref{eq} in a domain $\Omega$ if $u$ solves \eqref{eq} point-wisely in $\Omega \setminus \{u=0\}$, and moreover the domain variation formula \eqref{var dom} holds.
\end{definition}
We also introduce the notations
\[
C^{1,{\frac{q}{2-q}}-}(\Omega):= \bigcap_{0<\alpha<\frac{q}{2-q}} C^{1,\alpha}(\Omega),
\]
and
\begin{equation}\label{def norm}
\|u\|_{x_0,r}:= \left( \frac1{r^{N-2}} \int_{B_r(x_0)} |\nabla u|^2\, dx + \frac{1}{r^{N-1}} \int_{S_r(x_0)} u^2 \, d\sigma\right)^\frac12.
\end{equation}
For any $0<r<\dist(x_0,\pa B_1)$ fixed, this is a norm in $H^1(B_r(x_0))$, equivalent to the standard one by trace theory and Poincar\'e's inequality.

\medskip

With this new terminology, Theorem \ref{thm: reg good} establishes that any good solution to \eqref{eq} is of class $C^{1,{\frac{q}{2-q}}-}$, and is a stationary solution to \eqref{eq}. 

As already mentioned, we obtain Theorem \ref{thm: reg good} as a corollary of a more general result:

\begin{theorem}[Compactness of regularizing and blow-up sequences]\label{prop: bounds}
Let $q \in (0,1)$ and $\lambda_+,\lambda_->0$, and $0<\alpha<q/(2-q)$. Suppose that one of the following alternative occurs:
\begin{itemize}
\item[(h1)] $u$ is a good solution to \eqref{eq} in $B_\rho$ for some $\rho>0$, and $\{\eps_n\}$ and $\{u_n\}$ denote the two sequences in Definition \ref{def: good}.
\item[(h2)] $u$ is a stationary solution to \eqref{eq} of class $C^{1,\alpha}(B_1)$; also, the sequence $\{u_n\}$, defined by 
\begin{equation}\label{def blow up} 
u_n(x) := \frac{u(x_0+{\rho}_n x)}{\|u\|_{x_0,\rho_n}} 
\end{equation}
for some $x_0 \in B_1$ and $\rho_n \to 0^+$, is bounded in $H^1(B_\rho)$ for some $\rho>0$; and moreover the sequence
\[
\alpha_n:= \left( \frac{\rho_n^\frac{2}{2-q}}{\|u\|_{x_0,\rho_n}}\right)^{2-q}
\]
is bounded.
\end{itemize}
Then the sequence $\{u_n\}$ is uniformly bounded in $C^{1,\alpha}_{\loc}(B_\rho)$.
\end{theorem}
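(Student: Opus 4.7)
The approach is to argue by contradiction through a compactness-rescaling scheme, following \cite{NoTaTeVe} as adapted to singular equations in \cite{DaWaWe}. Fix $\rho' \in (0,\rho)$ and suppose, contrary to the conclusion, that $M_n := [\nabla u_n]_{C^{0,\alpha}(B_{\rho'})} \to +\infty$. A doubling/maximum-point selection provides sequences $x_n, y_n \in B_{\rho'}$ with $r_n := |x_n - y_n| \to 0$, satisfying $|\nabla u_n(x_n) - \nabla u_n(y_n)| \geq \tfrac{1}{2} M_n r_n^{\alpha}$ and the localized bound $[\nabla u_n]_{C^{0,\alpha}(B_{R r_n}(x_n))} \lesssim M_n$ for every fixed $R>0$. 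Setting
$$v_n(z) := \frac{u_n(x_n + r_n z) - u_n(x_n) - r_n\,\nabla u_n(x_n) \cdot z}{M_n\, r_n^{1+\alpha}},$$
one has $v_n(0) = \nabla v_n(0) = 0$, $[\nabla v_n]_{C^{0,\alpha}(B_R)} \leq C$, and $|\nabla v_n(e_n)| \geq 1/2$ with $e_n := (y_n-x_n)/r_n \in \S^{N-1}$. The sequence $v_n$ then satisfies
$$-\Delta v_n(z) = \frac{r_n^{1-\alpha}}{M_n}\,\Phi_n\bigl(u_n(x_n + r_n z)\bigr),$$
where $\Phi_n$ is the regularized nonlinearity in case (h1) or $\alpha_n\bigl[\lambda_+(t^+)^{q-1}-\lambda_-(t^-)^{q-1}\bigr]$ in case (h2), with uniform bound $|\Phi_n(t)| \leq C|t|^{q-1}$ thanks in particular to the boundedness of $\alpha_n$.

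Two regimes must be distinguished, according to the relative size of $S_n := |u_n(x_n)| + r_n|\nabla u_n(x_n)|$ and $M_n r_n^{1+\alpha}$. In the \emph{small} regime $S_n \lesssim M_n r_n^{1+\alpha}$, the Taylor-type bound $|u_n(x_n + r_n z)| \lesssim M_n r_n^{1+\alpha}(1+|z|^{1+\alpha})$, combined with $L^p_{\loc}$-control of the singular nonlinearity (obtained from the equation and the approximating scheme) yields a rescaled RHS of $L^p$-size proportional to $M_n^{q-2}\, r_n^{q(1+\alpha)-2\alpha}$. Since $q(1+\alpha)-2\alpha > 0$ \emph{precisely} under the hypothesis $\alpha < q/(2-q)$, this prefactor vanishes, and $W^{2,p}$ compactness produces a harmonic limit $v \in C^{1,\alpha}_{\loc}(\R^N)$ with $v(0)=\nabla v(0)=0$ and at most $C^{1,\alpha}$-growth at infinity; Liouville forces $v \equiv 0$, contradicting $|\nabla v(e)| \geq 1/2$. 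In the \emph{large} regime $S_n \gtrsim M_n r_n^{1+\alpha}$, one rescales instead by $S_n$, setting $\tilde v_n(z) := u_n(x_n + r_n z)/S_n$: these functions are uniformly $C^{1,\alpha}$-bounded, and one passes to the limit both in the equation pointwise (valid off the nodal set) and in the domain variation identity \eqref{var dom}, relying on $H^1$-weak and $L^2_{\loc}$-strong convergence of the rescalings. The limit $\tilde v$ is an entire stationary solution of Lane-Emden type $-\Delta \tilde v = \mu[\lambda_+(\tilde v^+)^{q-1}-\lambda_-(\tilde v^-)^{q-1}]$ on $\R^N$ (for some $\mu \geq 0$) of subcritical $C^{1,\alpha}$-growth; a Liouville-type theorem for such profiles — based on the Weiss-type monotonicity formulae and the classification of homogeneous solutions with admissible vanishing orders in $\{1,\,2/(2-q)\}$ — forces $\tilde v$ to be affine, contradicting the normalization.

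The \textbf{main obstacle} is the handling of the singularity of the nonlinearity at $\{u_n = 0\}$. The pointwise estimate $|\Phi_n(t)| \leq C|t|^{q-1}$ is not locally integrable in $t$, so naive $L^\infty$-control of $\Phi_n(u_n)$ is impossible; the argument must instead combine integrated identities derived from the PDE with uniform measure bounds on the sublevel sets $\{|u_n| < \delta\}$ coming from the approximation and the uniform $C^1$-bounds. A further subtlety in the large regime is ensuring the stability of \eqref{var dom} under rescaling, which demands a strong convergence of the gradient of $\tilde v_n$ obtained by testing \eqref{var dom} against suitable radial vector fields. Finally, the argument has a bootstrap flavour: the Liouville theorem invoked in the large regime ultimately rests on the classification of homogeneous profiles, developed later in the paper once $C^{1,\alpha}$-regularity is established; the full proof therefore requires a careful ordering of the intermediate steps — a priori bounds, stationary reformulation, classification — to close the loop without circularity.
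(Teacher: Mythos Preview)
Your overall scheme---contradiction, rescaling, Liouville---matches the paper's, but two of your steps have genuine gaps that the paper resolves by a different mechanism.

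\textbf{Small regime.} The claimed $L^p$-size of the rescaled right-hand side comes from plugging the upper bound $|u_n(x_n+r_nz)|\lesssim M_n r_n^{1+\alpha}$ into $|u_n|^{q-1}$. But $q-1<0$, so an upper bound on $|u_n|$ yields only a \emph{lower} bound on $|u_n|^{q-1}$; the expression $M_n^{q-2}r_n^{q(1+\alpha)-2\alpha}$ is purely formal. Your proposed fix via ``uniform measure bounds on the sublevel sets'' is not carried out, and the paper never attempts any $L^p$ control of the singular term. Instead it proves that the limit $w$ is harmonic only on the open set $\{w+a\neq 0\}$ (where the equation is nonsingular and the right-hand side genuinely tends to zero), and then passes the domain variation identity to the limit to obtain, for every $x_0$ and $r$,
\[
\int_{S_r(x_0)} |\nabla w|^2 = \frac{N-2}{r}\int_{B_r(x_0)}|\nabla w|^2 + 2\int_{S_r(x_0)}(\partial_\nu w)^2.
\]
This, together with harmonicity off the nodal set, is exactly the hypothesis of the Tavares--Terracini Liouville theorem \cite[Theorem~1.1]{TavTer}, which forces $w$ to be harmonic on all of $\R^N$; the standard Liouville theorem then closes the contradiction. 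Passing the domain variation identity to the limit requires upgrading weak $H^1_{\loc}$ convergence to strong, which the paper obtains by comparing the derivative of $r\mapsto r^{1-N}H(\,\cdot\,,x_0,r)$ before and after the limit.

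\textbf{Large regime and circularity.} You correctly flag that your Liouville theorem for entire Lane--Emden profiles with $\mu>0$ would rest on the later classification of homogeneous solutions, and this is indeed circular as stated. The paper's treatment of the analogous case (the lemma showing $a_n:=\eta(x_n)u_n(x_n)/(L_n r_n^{1+\alpha})$ and $b_n:=\nabla w_n(0)$ are bounded) is self-contained: if $|a_n|+|b_n|\to\infty$, the nodal set $\{\bar w_n+a_n+\langle b_n,x\rangle=0\}$ either escapes to infinity or converges to a fixed hyperplane; in both cases the limit is harmonic on the complement and, being $C^1$ across a codimension-one set, is harmonic on all of $\R^N$. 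Standard Liouville (subquadratic growth, non-constant gradient) gives the contradiction without any appeal to the singular classification. In your language, the paper shows that the ``large regime'' always reduces to a harmonic limit, so no Lane--Emden Liouville is ever needed.

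A final structural point: the paper first establishes uniform $C^{0,\alpha}$ bounds for every $\alpha<1$ (by the same contradiction scheme run at the $C^0$ level) before attacking $C^{1,\alpha}$; this intermediate step is used in the $C^{1,\alpha}$ argument, for instance to show that the two auxiliary blow-ups $\bar v_n$ and $\bar w_n$ have the same limit.
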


\begin{remark}\label{rmk: on bounds (0,1)}
The role of the Theorem in twofold: let $u$ be a good solution, so that (h1) is in force; then Theorem \ref{prop: bounds} directly implies the $C^{1,\alpha}_{\loc}$ convergence of the approximating sequence $\{u_n\}$ to $u$, up to a subsequence. Since each $u_n$ is regular, and satisfies a domain variation formula similar to \eqref{var dom}, this yields Theorem \ref{thm: reg good}.

Suppose now that Theorem \ref{prop: bounds} is proved when assumption (h1) is in force, and let $u$ be a good solution. Then, as observed, $u$ is a stationary solution of class $C^{1,\alpha}$ for every $0<\alpha<q/(2-q)$. Defining the blow-up sequence $\{u_n\}$ as in \eqref{def blow up}, if $\{\alpha_n\}$ is bounded, Theorem \ref{prop: bounds} establishes the $C^{1,\alpha}_{\loc}$ convergence of the blow-ups, a fundamental result which we invoke in the proof of Theorem \ref{thm: Hausdorff}. 
\end{remark}

The proof of Theorem \ref{prop: bounds} is divided into two main steps. Using the notation of Definition \ref{def: good}, at first we show in Section \ref{sec: H bounds} that the approximating sequence $\{u_n\}$ not only converges weakly in $H^1_{\loc}$, but is also uniformly bounded in $C^{0,\alpha}_{\loc}$ for every $0<\alpha<1$. Using these bounds, in a second step we show that $\{u_n\}$ is in fact bounded in $C^{1,\alpha}_{\loc}$ provided that $0<\alpha<q/(2-q)$; this entails in one shot the regularity of $u$ and the validity of the variation of domain formula (by approximation).

As announced, we also show that any local minimizer for the functional associated with \eqref{eq} is a good solution, see Proposition \ref{prop: min are good}, which will be the content of Section \ref{sec: min are good}.

Finally, in Section \ref{sec: ex many} we prove the existence of infinitely many non-minimal good solutions for homogeneous Dirichlet problems.

\section{H\"older bounds for approximating sequence}\label{sec: H bounds}

In this section we use the notation
\[
g_{n}(s):= \lambda_+ \frac{s^+}{\left( \eps_n^2 + s^2\right)^\frac{2-q}2}-\lambda_- \frac{s^-}{\left( \eps_n^2 + s^2\right)^\frac{2-q}2}, 
\]
and prove the validity of the following statement:

\begin{proposition}\label{prop: Hol 1}
Under the assumptions of Theorem \ref{prop: bounds}, we have that $\{u_n\}$ is bounded in $C^{0,\alpha}_{\loc}(B_\rho)$, for every $\alpha \in (0,1)$. 
\end{proposition}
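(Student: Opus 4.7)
The plan is a proof by contradiction through a blow-up analysis in the spirit of \cite{NoTaTeVe, DaWaWe}.

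\textbf{Step 1: Uniform $L^\infty$ bound.} First establish that $\{u_n\}$ is locally uniformly bounded in $L^\infty(B_{\rho'})$ for every $\rho'<\rho$. In case (h1) this is obtained by a Moser iteration on the regularized equation: testing against $\varphi^2|u_n|^{2(\beta-1)}u_n$ for $\beta\geq 1$ and a cutoff $\varphi$, and exploiting the elementary inequality $|g_n(s)\,s|\leq \lambda_{\max}(|s|^q+1)$ (uniform in $n$), the $L^\infty$ bound on $f_n$, and the weak $H^1$ convergence $u_n\weak u$. In case (h2), the bound follows from the $C^{1,\alpha}$ regularity of $u$ and the normalization by $\|u\|_{x_0,\rho_n}$ in the definition of $u_n$.

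\textbf{Step 2: Blow-up setup.} Suppose by contradiction that, for some $\alpha\in(0,1)$ and some compact $K\subset B_\rho$, $L_n:=[u_n]_{C^{0,\alpha}(K)}\to\infty$. Select $x_n\neq y_n\in K$ with $|u_n(x_n)-u_n(y_n)|\geq\tfrac12 L_n|x_n-y_n|^\alpha$, and put $r_n:=|x_n-y_n|$, $M_n:=L_n r_n^\alpha$. Step~1 forces $M_n\leq C$, hence $L_n\to\infty$ yields $r_n\to 0$. Introduce the rescaled functions
\[
v_n(x):=\frac{u_n(x_n+r_n x)-u_n(x_n)}{M_n},
\]
which satisfy $v_n(0)=0$, $[v_n]_{C^{0,\alpha}}\leq 1$, and $|v_n(\xi_n)|\geq\tfrac12$ for $\xi_n=(y_n-x_n)/r_n\in\partial B_1$. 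Setting $\tau_n:=u_n(x_n)/M_n$ and $\delta_n:=\eps_n/M_n$, a direct computation turns the equation for $u_n$ into
\[
-\Delta v_n(x)=r_n^{\,2-\alpha(2-q)}L_n^{q-2}\,\tilde g_{\delta_n}(v_n(x)+\tau_n)-\frac{r_n^{\,2-\alpha}}{L_n}f_n(x_n+r_n x),
\]
where $\tilde g_\delta(t):=\lambda_+ t^+/(\delta^2+t^2)^{(2-q)/2}-\lambda_- t^-/(\delta^2+t^2)^{(2-q)/2}$ is the same regularization with $\eps_n$ replaced by $\delta_n$. Since $\alpha(2-q)<2$ and $L_n\to\infty$, $q<2$, the prefactor $r_n^{\,2-\alpha(2-q)}L_n^{q-2}$ tends to $0$.

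\textbf{Step 3: Limit and Liouville.} Arzel\`a--Ascoli extracts a subsequence $v_n\to v_\infty$ in $C^0_{\loc}(\R^N)$ to an $\alpha$-Hölder function with $v_\infty(0)=0$ and $|v_\infty(\xi_\infty)|\geq\tfrac12$ for some $|\xi_\infty|=1$. The $f_n$-contribution is dominated by $\|f_n\|_\infty\,r_n^{\,2-\alpha}/L_n\to 0$. For the singular term, distinguishing cases according to the limits of $(\tau_n,\delta_n)\in\overline{\R}^{\,2}$, and balancing the two competing pointwise bounds $|\tilde g_{\delta_n}(t)|\leq\lambda_{\max}|t|^{q-1}$ (dominant where $|t|\gtrsim\delta_n$) and $|\tilde g_{\delta_n}(t)|\leq \lambda_{\max}\delta_n^{q-1}$ (dominant on the complement), one shows that the right-hand side vanishes in $L^p_{\loc}(\R^N)$ for some $p>N/2$. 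Elliptic theory then gives $-\Delta v_\infty=0$ on $\R^N$; together with $|v_\infty(x)|\leq|x|^\alpha$ and $\alpha<1$, Liouville's theorem forces $v_\infty$ to be constant, contradicting $v_\infty(0)=0$ and $|v_\infty(\xi_\infty)|\geq\tfrac12$.

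\textbf{Main obstacle.} The heart of the argument is Step~3 in the critical regime $\tau_\infty=0$ and $\delta_n\to 0$, when the blow-up is centered near the nodal set of $u_n$ at a scale comparable to the regularization. In that regime the bound $\lambda_{\max}|v_n+\tau_n|^{q-1}$ degenerates on the sublevel sets $\{|v_n+\tau_n|\leq\eta\}$, whose measure has to be quantitatively controlled exploiting the Hölder seminorm $[v_n]_{C^{0,\alpha}}\leq 1$; the contribution from these sets is then matched against $\delta_n^{q-1}$ times the vanishing prefactor $r_n^{\,2-\alpha(2-q)}L_n^{q-2}$, in order to secure the $L^p_{\loc}$ decay of the nonlinear term. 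This is the delicate measure-theoretic step inherited from the scheme of \cite{NoTaTeVe, DaWaWe} and constitutes the main technical point of the proposition.
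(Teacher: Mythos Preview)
Your Step~3 contains a genuine gap in the critical regime you yourself flag. An upper H\"older bound $[v_n]_{C^{0,\alpha}}\le 1$ gives no control on the measure of the sublevel sets $\{|v_n+\tau_n|\le\eta\}$: it bounds how \emph{fast} $v_n$ can vary, not how \emph{slowly}, so these sets may fill any compact. Worse, the $L^\infty$ bound $|\tilde g_{\delta_n}(t)|\le\lambda_{\max}\delta_n^{q-1}$ combined with the prefactor yields $k_n\delta_n^{q-1}\sim r_n^{2-\alpha}L_n^{-1}\eps_n^{q-1}$, and there is no a priori relation between the regularization scale $\eps_n$ and the blow-up scales $r_n, L_n$; this quantity can diverge. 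Hence the claimed $L^p_{\loc}$ decay of the right-hand side, for any $p>N/2$, cannot be obtained by the balancing you describe. In addition, your blow-up analysis is written only for case (h1); under (h2) the equation for $u_n$ holds only on $\{u_n\neq 0\}$ and there is no $\delta_n$ at all, so your framework does not apply.

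The paper circumvents exactly this obstruction by \emph{not} attempting to show that the nonlinear term vanishes in $L^p$. It passes to the limit only on $\{\tilde v\neq 0\}$ (where the singularity is harmless), obtaining harmonicity there; then it upgrades the uniform convergence to strong $H^1_{\loc}$ convergence by comparing the derivative of $r\mapsto r^{1-N}\int_{S_r}\tilde w_n^2$ with that of the limit (Lemmas~\ref{lem: der H 2}--\ref{lem: strong Hol1}); finally it passes to the limit in the \emph{domain variation formula} (available under both (h1) and (h2)) to obtain the Pohozaev-type identity for $\tilde v$, and invokes \cite[Theorem~1.1]{TavTer} to conclude that $\tilde v$ is harmonic across its nodal set. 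This detour through the variational identity is the missing idea in your argument.
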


The proof will take the rest of the section. We start with a preliminary lemma.

\begin{lemma}\label{lem: unif l infty}
Under the assumptions of Theorem \ref{prop: bounds}, we have that $\{u_n\}$ is bounded in $L^\infty_{\loc}(B_\rho)$.
\end{lemma}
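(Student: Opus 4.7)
The proof rests on the following uniform bound: since $q\in(0,1)$, an elementary computation yields
\[
|g_n(s)|\le\Lambda|s|^{q-1}\quad\text{for every }s\ne 0,\qquad \Lambda:=\max\{\lambda_+,\lambda_-\},
\]
and the same estimate $|g(s)|\le\Lambda|s|^{q-1}$ holds for the limit nonlinearity $g(s):=\lambda_+(s^+)^{q-1}-\lambda_-(s^-)^{q-1}$ relevant to case (h2). In particular, both are bounded by $\Lambda$ as soon as $|s|\ge 1$. In case (h1), combined with the $L^\infty$ bound on $\{f_n\}$, this gives $|-\Delta u_n|\le K$ a.e.\ on $\{|u_n|\ge 1\}$. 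In case (h2), a direct change of variables shows that $u_n$ satisfies $-\Delta u_n=\alpha_n g(u_n)$ classically on the open set $\{u_n\ne 0\}$, and the boundedness of $\{\alpha_n\}$ again yields $|-\Delta u_n|\le K$ on $\{|u_n|\ge 1\}$, with a constant $K$ independent of $n$.

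The plan is then to truncate at level $1$. Set $w_n^+:=(u_n-1)^+$ and $w_n^-:=(-u_n-1)^+$, both in $H^1_{\loc}(B_\rho)$, both supported in the region where $|u_n|\ge 1$. Each $u_n$ is of class $C^{1,\alpha}_{\loc}(B_\rho)$ for every $\alpha\in(0,1)$: in (h1) as an approximating solution to a regular semilinear equation with bounded right-hand side, in (h2) as a rescaling of the $C^{1,\alpha}$ stationary solution $u$. Combining this regularity with the convexity of $t\mapsto(t-1)^+$, a standard chain-rule argument (approximate by smooth convex $G_k$ with $G_k'(t)\to\chi_{\{t>1\}}$ pointwise a.e.\ and $G_k''\ge 0$, test the equation for $u_n$ against $\phi\, G_k'(u_n)$ for $\phi\in C_c^\infty(B_\rho)$, $\phi\ge 0$, and pass to the limit $k\to\infty$, discarding the nonnegative contribution $\int\phi\,G_k''(u_n)|\nabla u_n|^2$) produces the weak subsolution inequality
\[
-\Delta w_n^+ \le \chi_{\{u_n>1\}}(-\Delta u_n) \le K \quad\text{in }\cD'(B_\rho),
\]
uniformly in $n$, and analogously $-\Delta w_n^-\le K$. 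In case (h2) it is crucial that the support of $G_k'(u_n)$ is contained in the open set $\{u_n>1-1/k\}\subset\{u_n\ne 0\}$ for $k$ large, where the pointwise equation provides the necessary classical integration by parts.

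The conclusion then follows from the standard local $L^\infty$-estimate for nonnegative $H^1$-subsolutions of an equation with bounded right-hand side (e.g.\ Theorem 8.17 in Gilbarg--Trudinger, or a direct De Giorgi/Moser iteration): for every pair of concentric balls $B_r$, $B_{2r}$ with $\overline{B_{2r}}\subset B_\rho$,
\[
\|w_n^\pm\|_{L^\infty(B_r)}\le C(r,\rho,K)\bigl(1+\|w_n^\pm\|_{L^2(B_{2r})}\bigr).
\]
Because $\{u_n\}$ is bounded in $H^1_{\loc}(B_\rho)$ by hypothesis, the $L^2$ norms on the right are uniformly bounded, hence so are the $L^\infty$ norms on the left, and $\|u_n\|_{L^\infty(B_r)}\le 1+\|w_n^+\|_{L^\infty(B_r)}+\|w_n^-\|_{L^\infty(B_r)}$ is uniformly bounded. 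The only mildly delicate point is the justification of the Kato-type subsolution inequality for the truncations; this is routine given the $C^{1,\alpha}$ regularity of each individual $u_n$, and the estimate is quantitatively uniform in $n$ because that regularity is used only \emph{qualitatively}, while the quantitative bound is governed entirely by the constant $K$.
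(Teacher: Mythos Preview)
Your proof is correct and follows essentially the same strategy as the paper: apply Kato's inequality to the truncations $(u_n-1)^+$ and $(-u_n-1)^+$, exploiting that the (possibly singular) right-hand side is uniformly bounded on the set $\{|u_n|\ge 1\}$, and then conclude by a standard subsolution estimate. The only minor difference is in the last step: the paper compares $w_n^\pm$ with the explicit Poisson solution having the same boundary data on $S_\rho$ (using the uniform $L^2(S_\rho)$ bound coming from trace compactness), whereas you invoke the interior $L^\infty$ estimate of Gilbarg--Trudinger using the $L^2_{\loc}$ bound from the $H^1_{\loc}$ hypothesis; both are routine and yield the same conclusion.
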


\begin{proof}
We suppose at first that (h1) holds. Let us consider $w_n:=(u_n-1)^+ \in H^1(B_\rho)$. By Kato's inequality, we have that
\begin{align*}
-\Delta w_n &\le \left( - f_n + \lambda_+ \frac{u_n^+}{\left( \eps_n^2 + u_n^2\right)^\frac{2-q}2}-\lambda_- \frac{u_n^-}{\left( \eps_n^2 + u_n^2\right)^\frac{2-q}2}\right) \chi_{\{u_n >1\}} \\
& \le  \Vert f_n\Vert_{L^\infty(B_\rho)}+ \lambda_+ \le M
\end{align*}
in $B_\rho$. Let $\psi_n$ be the solution to 
 \[
 \begin{cases}
 -\Delta \psi_n = M & \text{in $B_\rho$} \\
 \psi_n = w_n & \text{on $\pa B_\rho$};
 \end{cases}
 \]
applying the comparison principle, we obtain the estimate $\psi_n \ge w_n$ in $B_\rho$. Using the boundedness of $\{w_n\}$ in $L^2(S_\rho)$ (which holds by compactness of traces and $H^1$ convergence), the local boundedness of $\{w_n\}$ in $L^\infty$ follows by the explicit representation of $\psi_n$ via the Newtonian potential, formula (4.10) in \cite{GT}. The same argument applies to $(u_n+1)^-$, and gives the thesis under assumption (h1).

If (h2) holds, nothing changes, since each $u_n$ is continuous in $\overline{B_{\rho}}$, $\{u_n\}$ is bounded in $L^2(S_\rho)$, and
\[
-\Delta u_n = \alpha_n \left( \lambda_+ (u_n^+)^{q-1} -\lambda_- (u_n^-)^{q-1} \right) \quad \text{in the open set $\{u_n \neq 0\}$}.
\]
Hence $-\Delta (u_n \mp 1)^{\pm} \le (\sup_n \alpha_n) \lambda_{\pm}$ in $B_\rho$.
\end{proof}

Let $\alpha \in (0,1)$, let us fix a compact set $K \subset B_\rho$, and let $\eta \in C^\infty_c(B_\rho)$ with $\eta \equiv 1$ on $K$, $0 \le \eta \le 1$. If we show that the sequence $\{(\eta u_n)\}$ is bounded in $C^{0,\alpha}(\overline{B_\rho})$, then, using the fact that $\eta \equiv 1$ on $K$, we infer that $\{u_n\}$ is bounded in $C^{0,\alpha}(K)$, which is the desired result. Thus, from now on we focus on the boundedness of $\{(\eta u_n)\}$. Suppose by contradiction that $\{(\eta u_n)\}$ is not bounded in $C^{0,\alpha}(\overline{B_\rho})$. Since $\eta u_n$ is sufficiently regular for each $n$ fixed, we have that in this case there exist $x_n, y_n \in B_\rho$ such that
\[
L_n:= \sup_{\substack{x \neq y, \\ x, y \in \overline{B_\rho}}} \frac{|(\eta u_n)(x)-(\eta u_n)(y)|}{|x-y|^\alpha} = \frac{|(\eta u_n)(x_n)-(\eta u_n)(y_n)|}{|x_n-y_n|^\alpha} \to +\infty.
\]
Notice that $r_n:= |x_n-y_n| \to 0$ as $n \to \infty$, since $\{(\eta u_n)\}$ is bounded in $L^\infty(B_\rho)$. 

In order to reach a contradiction, we introduce two blow-up sequences:
\[
\tilde v_n(x):= \frac{1}{L_n r_n^\alpha} (\eta u_n)(x_n+r_n x), \qquad \tilde w_n(x):= \frac{\eta(x_n)}{L_n r_n^\alpha} u_n(x_n+r_n x), 
\]
both defined on the scaled domains $(B_\rho-x_n)/r_n =: \Omega_n$. Notice that, since $x_n \in \supp \, \eta \subset B_\rho$, as $n \to \infty$ there holds that $\Omega_n$ exhausts $\R^N$\footnote{It can be convenient to observe that not only $x_n \in \supp\,\eta$, but we can suppose $\eta(x_n)>0$ for every $n$ (otherwise we exchange the role of $x_n$ and $y_n$).}. In what follows we collect some basic properties of $\tilde v_n$ and $\tilde w_n$. We anticipate that, by definition, $\{\tilde v_n\}$ will have good compactness properties. On the other hand, each $\tilde v_n$ does not satisfy a nice equation, and this is why we also consider $\tilde w_n$. The comparison between $\tilde v_n$ and $\tilde w_n$ will ultimately lead to a contradiction. By definition, we have that
\[
[\tilde v_n]_{C^{0,\alpha}(\overline{\Omega_n})} = \sup_{\substack{x \neq y, \\ x, y \in \overline{\Omega_n}}} \frac{|\tilde v_n(x)-\tilde v_n(y)|}{|x-y|^\alpha} = \frac{\left|\tilde v_n(0)-\tilde v_n\left(\frac{y_n-x_n}{r_n}\right)\right|}{\left|\frac{y_n-x_n}{r_n}\right|^\alpha} = 1,
\]
that is, $\tilde v_n$ is globally H\"older continuous in $\overline{\Omega_n}$, with seminorm exactly equal to $1$. In case (h1) holds, letting 
\[
\tilde f_n(x):= -\frac{r_n^{2-\alpha} \eta(x_n)}{L_n} f_n(x_n+r_nx),
\]
\[
k_n:=r_n^{2-\alpha(2-q)} \left( \frac{\eta(x_n)}{L_n}\right)^{2-q}, \qquad \tilde \eps_n := \frac{\eta(x_n) \eps_n}{L_n r_n^{\alpha} },
\]
and
\[
\tilde g_n(\tilde w_n):= \lambda_+ \frac{\tilde w_n^+}{\left({\tilde \eps_n}^2 + \tilde w_n^2 \right){\frac{2-q}2}} - \lambda_- \frac{\tilde w_n^-}{\left( {\tilde \eps_n}^2 + \tilde w_n^2 \right)^{\frac{2-q}2}},
\]
the equation of $\tilde w_n$ can be written in the form
\begin{subequations}\label{eq w tilde}
\begin{equation}\label{eq w tilde 1}
-\Delta \tilde w_n = \tilde f_n + k_n \tilde g_n(\tilde w_n).
\end{equation}

In case (h2) holds, the equation of $u$ is satisfied only in $\{u \neq 0\}$, and hence 
\begin{equation}\label{eq w tilde 2}
-\Delta \tilde w_n = k_n \tilde g_n(\tilde w_n) \qquad \text{in $\Omega_n \setminus \{\tilde w_n = 0\}$},
\end{equation}
\end{subequations}
with $k_n$ as above and
\[
\tilde g_n(\tilde w_n):= \alpha_n \left( \lambda_+ (\tilde w_n^+)^{q-1} - \lambda_- (\tilde w_n^-)^{q-1} \right).
\]

It is convenient to observe that, since $r_n \to 0^+$, $L_n \to \infty$, $\alpha \in (0,1)$, and $\{f_n\}_n$ is uniformly bounded, there holds 
\[
\|\tilde f_n\|_{L^\infty(\Omega_n)} \to 0, \quad \text{and} \quad k_n \to 0^+ \quad \text{as $n \to \infty$}.
\]

We aim at passing to the limit in the equations \eqref{eq w tilde}, but the problem is that, contrarily to $\tilde v_n$, we do not have a priori a precise control on the growth of $\tilde w_n$. This obstruction can be overcome with the following observation.

\begin{lemma}\label{lem: osc}
It results that $\tilde v_n-\tilde w_n \to 0$ as $n \to \infty$, uniformly on compact sets of $\R^N$. Moreover, $\{\tilde w_n\}$ is equicontinuous and has uniformly bounded oscillation on any compact set $K \subset \R^N$.
\end{lemma}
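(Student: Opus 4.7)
The plan is to reduce everything to the elementary identity
\[
\tilde v_n(x)-\tilde w_n(x)=\frac{\eta(x_n+r_n x)-\eta(x_n)}{L_n r_n^\alpha}\,u_n(x_n+r_n x),
\]
which follows straight from the definitions of $\tilde v_n$ and $\tilde w_n$. The right-hand side is ripe for explicit estimates: since $\eta\in C^\infty_c(B_\rho)$, the mean value theorem yields $|\eta(x_n+r_n x)-\eta(x_n)|\le \|\nabla\eta\|_\infty\, r_n\,|x|$, and Lemma \ref{lem: unif l infty} provides a uniform bound $\|u_n\|_{L^\infty(K')}\le C$ on any compact set $K'\subset B_\rho$ containing the translated points $x_n+r_n x$ for $x$ in a prescribed compact set of $\R^N$ and $n$ large.

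First I would combine these two bounds to obtain, for every compact $K\subset\R^N$,
\[
\sup_{x\in K}\bigl|\tilde v_n(x)-\tilde w_n(x)\bigr|\;\le\; \frac{C\,\|\nabla\eta\|_\infty\,\mathrm{diam}(K)}{L_n}\,r_n^{\,1-\alpha}.
\]
Since $r_n\to 0^+$, $L_n\to+\infty$ and $\alpha\in(0,1)$, the right-hand side tends to $0$, yielding the first claim $\tilde v_n-\tilde w_n\to 0$ uniformly on compact sets. Next, recall that by construction $[\tilde v_n]_{C^{0,\alpha}(\overline{\Omega_n})}=1$, so the family $\{\tilde v_n\}$ is uniformly (in fact globally) equicontinuous. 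Writing
\[
\bigl|\tilde w_n(x)-\tilde w_n(y)\bigr|\le \bigl|\tilde v_n(x)-\tilde v_n(y)\bigr|+\bigl|(\tilde w_n-\tilde v_n)(x)\bigr|+\bigl|(\tilde w_n-\tilde v_n)(y)\bigr|\le |x-y|^\alpha+o(1),
\]
with $o(1)$ uniform on $K$, one gets equicontinuity of $\{\tilde w_n\}$ on any compact set. The same inequality, specialized to $x,y\in K$, yields
\[
\mathrm{osc}_K \tilde w_n\;\le\;\mathrm{diam}(K)^\alpha+2\sup_{z\in K}|\tilde v_n(z)-\tilde w_n(z)|,
\]
which is uniformly bounded in $n$, giving the second claim.

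No genuine obstacle is expected here; the only subtlety is to guarantee that $\eta(x_n)$ does not enter the quantitative estimate in a harmful way. This is what the footnote exploits: the assumption $\eta(x_n)>0$ (which we may enforce by swapping $x_n$ and $y_n$ if necessary) is used only to make the definition of $\tilde w_n$ meaningful, whereas the factor $\eta(x_n)$ cancels out neatly in the numerator $\eta(x_n+r_n x)-\eta(x_n)$ of the decisive identity above. Consequently all the estimates depend on $\|\nabla\eta\|_\infty$ and the uniform $L^\infty$ bound on $u_n$ only, and not on the possibly small value $\eta(x_n)$.
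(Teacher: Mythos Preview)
Your proposal is correct and follows essentially the same approach as the paper: both arguments rest on the identity $\tilde v_n-\tilde w_n=(L_n r_n^\alpha)^{-1}\bigl(\eta(x_n+r_n x)-\eta(x_n)\bigr)u_n(x_n+r_n x)$, the Lipschitz bound on $\eta$, the uniform $L^\infty_{\loc}$ bound on $u_n$ from Lemma~\ref{lem: unif l infty}, and then the triangle inequality combined with $[\tilde v_n]_{C^{0,\alpha}}=1$ to transfer equicontinuity and bounded oscillation to $\tilde w_n$. Your write-up is slightly more explicit about the oscillation bound and the role of the footnote, but there is no substantive difference.
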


\begin{proof}
Let $K \subset \R^N$ be compact, and let $n$ so large so that $K \subset \Omega_n$. Then, by Lemma \ref{lem: unif l infty} and the Lipschitz continuity of $\eta$, we have  
\[
\sup_{x \in K} | \tilde v_n-\tilde w_n| = \sup_{x \in K} \frac{|u_n(x_n+r_n x)|}{L_n r_n^\alpha} |\eta(x_n+r_n x) - \eta(x_n)| \le \frac{C r_n^{1-\alpha}}{L_n} \sup_{x \in K}|x| \to 0
\]
as $n \to \infty$, as desired.

Regarding the equicontinuity of $\{\tilde w_n\}$, it follows easily by the same property for $\{\tilde v_n\}$ and by local uniform convergence. Indeed, let $K \subset \R^N$ be compact and $x,y \in K$. Then 
\begin{align*}
 |\tilde w_n(x)-\tilde w_n(y)| &\le  |\tilde w_n(x)-\tilde v_n(x)| + |\tilde v_n(x)-\tilde v_n(y)| + |\tilde v_n(y)-\tilde w_n(y)| \\
& \le o(1) +  |x-y|^\alpha,
\end{align*}
with $o(1) \to 0$ uniformly on $K$.
\end{proof}
As a consequence:
\begin{lemma}
The sequence $\{\tilde w_n(0)\}$ is bounded in $\R$.
\end{lemma}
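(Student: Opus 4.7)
The plan is to argue by contradiction. Suppose, along a subsequence, that $|\tilde w_n(0)| \to \infty$; by symmetry I may assume $\tilde w_n(0) \to +\infty$. The uniform oscillation bound from Lemma \ref{lem: osc} then forces $\tilde w_n \to +\infty$ uniformly on every compact set of $\R^N$.

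Next I would re-center, setting $\bar w_n := \tilde w_n - \tilde w_n(0)$. This leaves the Laplacian unchanged, so $\bar w_n$ satisfies \eqref{eq w tilde 1} (or \eqref{eq w tilde 2}) with $\tilde w_n$ on the right-hand side. The equicontinuity estimate proved in Lemma \ref{lem: osc} combined with $\bar w_n(0) = 0$ yields, on each compact $K \subset \R^N$,
\[
|\bar w_n(x)| \le |x|^\alpha + o(1),
\]
so $\{\bar w_n\}$ is locally equibounded and equicontinuous. By Arzel\`a-Ascoli, along a further subsequence, $\bar w_n \to \bar w$ locally uniformly on $\R^N$, with the growth bound $|\bar w(x)| \le |x|^\alpha$ preserved in the limit.

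The next step is to pass to the limit in the PDE. Since $|\tilde g_n(s)| \le C |s|^{q-1}$ for $|s|$ bounded below (regardless of $\tilde\eps_n$), and since $\tilde w_n \to +\infty$ on compact sets, $\tilde g_n(\tilde w_n) \to 0$ uniformly there. Combining this with $\tilde f_n \to 0$ and $k_n \to 0^+$ in case (h1), and with the boundedness of $\{\alpha_n\}$ in case (h2), the right-hand sides of \eqref{eq w tilde 1}-\eqref{eq w tilde 2} vanish uniformly on compact sets. In case (h2) the restriction to $\{\tilde w_n \neq 0\}$ is not an obstacle: for $n$ large this set coincides with the full compact under consideration, since $\tilde w_n$ is large and positive there. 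Hence $\bar w$ is an entire harmonic function of strictly sublinear growth, so Liouville's theorem (through the standard gradient estimate on balls of increasing radius) forces $\bar w$ to be constant; together with $\bar w(0)=0$, this gives $\bar w \equiv 0$.

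Finally, along a further subsequence I may assume $z_n := (y_n - x_n)/r_n \to z \in \S^{N-1}$. Using once more that $\tilde v_n - \tilde w_n \to 0$ uniformly on $\overline{B_1(0)}$,
\[
|\bar w_n(z_n)| = |\tilde v_n(z_n) - \tilde v_n(0)| + o(1) = 1 + o(1),
\]
by the very definition of $L_n$, $x_n$ and $y_n$. Passing to the limit then gives $|\bar w(z)| = 1$, contradicting $\bar w \equiv 0$. The main obstacle in this argument is the passage to the limit in the nonlinear term of the equation: although $\tilde g_n$ is singular near zero, the key feature is its decay like $|s|^{q-1} \to 0$ as $|s|\to\infty$, made possible by the assumption $q<1$; it is precisely this decay that turns the shifted blow-up $\bar w_n$ into an asymptotically harmonic sequence and unlocks Liouville.
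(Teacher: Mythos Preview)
Your argument is correct and follows essentially the same route as the paper's proof: contradict by assuming $\tilde w_n(0)\to+\infty$, use the oscillation bound to force $\tilde w_n\to+\infty$ locally uniformly, recenter by subtracting $\tilde w_n(0)$, pass to a locally uniform limit which is entire harmonic (since the right-hand side of \eqref{eq w tilde} vanishes on compacta) with strictly sublinear growth, and contradict Liouville via the normalization $|\tilde v_n(z_n)-\tilde v_n(0)|=1$. The only cosmetic difference is that the paper simultaneously tracks the recentered $\tilde V_n:=\tilde v_n-\tilde v_n(0)$ alongside $\tilde W_n:=\tilde w_n-\tilde w_n(0)$, using the global $\alpha$-H\"older bound on $\tilde v_n$ directly to get the sublinear growth of the limit, whereas you extract the growth bound from the oscillation estimate on $\tilde w_n$; both lead to the same Liouville contradiction.
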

\begin{proof}
Suppose by contradiction that $\{\tilde w_n(0)\}$ is unbounded. Without loss of generality, we can assume that $\tilde w_n(0) \to +\infty$. By Lemma \ref{lem: osc}, we have that $\tilde w_n \to +\infty$ uniformly on compact sets. In particular, this implies that if $K$ is compact, then $\{w_n =0\} \cap K = \emptyset$ for $n$ large. Let now 
\[
\tilde V_n(x) := \tilde v_n(x)-\tilde v_n(0), \quad \tilde W_n(x):= \tilde w_n(x)-\tilde w_n(0).
\]
These sequences are bounded in $x=0$, and hence by definition of $\tilde v_n$ and Lemma \ref{lem: osc} are locally uniformly convergent, up to a subsequence, to the same limit $V$. Notice that $\Delta \tilde W_n= \Delta \tilde w_n$, and since $\tilde w_n \to +\infty$ locally uniformly we can pass to the limit in equation \eqref{eq w tilde}, deducing that the limit $V$ is harmonic in $\R^N$. By convergence, $V$ is globally $\alpha$-H\"older continuous (with H\"older constant equal to $1$), and this implies that necessarily $V$ is constant by the Liouville theorem. On the other hand, up to a subsequence $(y_n-x_n)/r_n \to z \in S_1$, and using once again the uniform convergence we deduce that
\[
V(z)- V(0) = \lim_{n \to \infty} \left( \tilde V_n\left(\frac{y_n-x_n}{r_n}\right) - \tilde V_n(0) \right) = 1;
\]
therefore, $V$ cannot be constant, and we reached the desired contradiction.
\end{proof}

The previous lemma implies that  $\tilde v_n$ and $\tilde w_n$ are bounded in $0$, and hence, using also Lemma \ref{lem: osc}, they are both uniformly convergent on any compact set of $\R^N$ to the same limit function $\tilde v$. Directly by convergence, the function $\tilde v$ is globally $\alpha$-H\"older continuous, with H\"older constant equal to $1$, and is non-constant.

\begin{lemma}
In case (h1) holds, we have that $\tilde \eps_n \to 0$ as $n \to \infty$.
\end{lemma}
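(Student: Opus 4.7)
The plan is to argue by contradiction: assume, up to a subsequence, that $\tilde \eps_n \to \tilde \eps_\infty \in (0,+\infty]$, and then show that the limit $\tilde v$ of $\tilde w_n$ must be a globally $\alpha$-H\"older continuous harmonic function on $\R^N$, forcing $\tilde v$ to be constant by Liouville's theorem, in contradiction with the non-constancy of $\tilde v$ established above.

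First I would observe that if $\tilde \eps_n$ stays bounded away from $0$ (possibly going to $+\infty$), then the pointwise estimate
\[
|\tilde g_n(\tilde w_n)| \le (\lambda_++\lambda_-)\,\frac{|\tilde w_n|}{(\tilde \eps_n^2+\tilde w_n^2)^{\frac{2-q}2}} \le \frac{(\lambda_++\lambda_-)}{\tilde \eps_n^{2-q}}\,|\tilde w_n|
\]
combined with the local uniform boundedness of $\tilde w_n$ (which follows from $\tilde w_n \to \tilde v$ locally uniformly) gives that $\tilde g_n(\tilde w_n)$ is bounded on every compact subset of $\R^N$, uniformly in $n$. Since $k_n \to 0^+$ and $\|\tilde f_n\|_{L^\infty(\Omega_n)} \to 0$, we deduce that the right-hand side of \eqref{eq w tilde 1} converges to $0$ locally uniformly.

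Next, I would pass to the limit in \eqref{eq w tilde 1} in the distributional sense. By the $H^1_{\loc}$-weak and locally uniform convergence $\tilde w_n \to \tilde v$, and the local uniform vanishing of the right-hand side, one obtains $-\Delta \tilde v = 0$ in $\mathcal{D}'(\R^N)$, so $\tilde v$ is harmonic in $\R^N$ by Weyl's lemma. Since $\tilde v$ is globally $\alpha$-H\"older continuous with seminorm at most $1$ and $\alpha \in (0,1)$, it grows at most sub-linearly at infinity, so Liouville's theorem forces $\tilde v$ to be a constant.

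The desired contradiction follows from the non-constancy of $\tilde v$ already established in the preceding analysis: passing to a subsequence so that $(y_n-x_n)/r_n \to z \in S_1$, the uniform convergence $\tilde v_n \to \tilde v$ on compact sets together with Lemma \ref{lem: osc} yields
\[
|\tilde v(z) - \tilde v(0)| = \lim_{n\to\infty} \frac{|(\eta u_n)(y_n) - (\eta u_n)(x_n)|}{L_n r_n^\alpha} = 1,
\]
which is incompatible with $\tilde v \equiv \text{const}$. The only delicate point is handling the case $\tilde \eps_\infty = +\infty$, but there the bound on $\tilde g_n(\tilde w_n)$ only improves. Hence $\tilde \eps_n \to 0$, as claimed.
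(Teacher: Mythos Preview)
Your proposal is correct and follows essentially the same approach as the paper: assume by contradiction that $\tilde\eps_n \ge \bar\eps > 0$ along a subsequence, use the bound $|\tilde g_n(\tilde w_n)| \le C|\tilde w_n|/\bar\eps^{2-q}$ together with $k_n \to 0$, $\|\tilde f_n\|_{L^\infty}\to 0$, and the local boundedness of $\tilde w_n$ to conclude that $\tilde v$ is harmonic, and then invoke Liouville against the already established non-constancy of $\tilde v$. Your separate treatment of $\tilde\eps_\infty = +\infty$ and your re-derivation of the non-constancy of $\tilde v$ are superfluous (both facts are already available at this point in the argument), but harmless.
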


\begin{proof}
If up to a subsequence $\tilde \eps_n \ge \bar \eps>0$, using \eqref{eq w tilde 1} we obtain
\[
|\Delta \tilde w_n| \le  |\tilde f_n|+ \frac{k_n}{\bar \eps^{2-q}} |\tilde w_n| \quad \text{in $\Omega_n$}.
\]
Recalling that $k_n \to 0$, $\|\tilde f_n\|_{L^\infty(\Omega_n)} \to 0$, and using the local boundedness of $\tilde w_n$, we deduce that $\tilde v$ is harmonic in $\R^N$. The Liouville theorem implies then that $\tilde v$ must be constant, a contradiction.
\end{proof}

\begin{lemma}\label{lem: ar non 0}
The limit function $\tilde v$ is harmonic in the open set $\{\tilde v \neq 0\}$. 
\end{lemma}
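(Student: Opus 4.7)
Plan: The idea is that in the open region $\{\tilde v \neq 0\}$, the sequence $\tilde w_n$ stays uniformly bounded away from zero on compact subsets, so the nonlinear term in the equation for $\tilde w_n$ stays uniformly bounded (it is only singular near $\tilde w_n = 0$); combined with the vanishing factor $k_n \to 0^+$, the right-hand side tends to $0$ uniformly, leaving a harmonic limit.

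First, fix $x_0 \in \{\tilde v \neq 0\}$; without loss of generality $\tilde v(x_0) > 0$. By continuity of $\tilde v$, there exists $r>0$ and $c>0$ such that $\tilde v \ge 2c$ on $\overline{B_{r}(x_0)}$. Since $\tilde w_n \to \tilde v$ locally uniformly, for $n$ large enough we have $\tilde w_n \ge c>0$ on $\overline{B_r(x_0)}$, and in particular $B_r(x_0) \subset \Omega_n \setminus \{\tilde w_n = 0\}$ for $n$ large. Hence in both cases (h1) and (h2) the equation \eqref{eq w tilde} is satisfied pointwise on $B_r(x_0)$.

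Next, I estimate the right-hand side of \eqref{eq w tilde}. Under (h1),
\[
|k_n \tilde g_n(\tilde w_n)| \le k_n \lambda_+ \frac{\tilde w_n}{(\tilde\eps_n^2 + \tilde w_n^2)^{(2-q)/2}} \le k_n \frac{\lambda_+}{\tilde w_n^{1-q}} \le k_n \frac{\lambda_+}{c^{1-q}},
\]
(with the analogous bound for the negative part already irrelevant since $\tilde w_n > 0$), and $\|\tilde f_n\|_{L^\infty} \to 0$. Under (h2), using the boundedness of $\{\alpha_n\}$,
\[
|k_n \tilde g_n(\tilde w_n)| \le k_n\alpha_n\lambda_+ \tilde w_n^{q-1} \le k_n \alpha_n \lambda_+ c^{q-1}.
\]
Since $k_n \to 0^+$ in both situations, we conclude
\[
\|\Delta \tilde w_n\|_{L^\infty(B_r(x_0))} \longrightarrow 0 \qquad \text{as } n \to \infty.
\]

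Finally, I pass to the limit. Using $L^\infty$ bounds on $\tilde w_n$ (Lemma \ref{lem: unif l infty} rescales to give local uniform bounds) together with $L^\infty$ bounds on $\Delta \tilde w_n$, standard interior Calderón--Zygmund estimates give that $\{\tilde w_n\}$ is bounded in $W^{2,p}(B_{r/2}(x_0))$ for every $p<\infty$, and hence also in $C^{1,\alpha}(B_{r/2}(x_0))$. The uniform limit $\tilde v$ therefore coincides with the $W^{2,p}$/$C^{1,\alpha}$ limit, and we may pass to the limit in the weak formulation
\[
\int \nabla \tilde w_n \cdot \nabla \varphi \, dx = \int (\tilde f_n + k_n\tilde g_n(\tilde w_n))\,\varphi \, dx \qquad \forall\, \varphi \in C^\infty_c(B_{r/2}(x_0)),
\]
obtaining $\int \nabla \tilde v \cdot \nabla \varphi \, dx = 0$ for every such $\varphi$. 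Thus $\tilde v$ is harmonic in $B_{r/2}(x_0)$, and since $x_0$ was arbitrary in $\{\tilde v \neq 0\}$, the claim follows. The only subtle point is making sure the nonlinearity is genuinely controlled independently of $\tilde\eps_n$ in case (h1); this is immediate from the lower bound $\tilde w_n \ge c$, which makes the $\tilde\eps_n^2$ in the denominator irrelevant.
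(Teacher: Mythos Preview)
Your proof is correct and follows essentially the same approach as the paper: bound $\tilde w_n$ away from zero on compact subsets of $\{\tilde v\neq 0\}$ via uniform convergence, so that the nonlinear term stays bounded and $k_n\to 0$ kills the right-hand side. The paper's argument is more terse (it simply says the equation ``passes to the limit'' once the right-hand side tends to $0$ uniformly), whereas you spell out the passage via Calder\'on--Zygmund estimates and the weak formulation; this extra detail is fine but not needed. One small remark: the local $L^\infty$ bound on $\tilde w_n$ you invoke follows directly from the locally uniform convergence $\tilde w_n\to\tilde v$ already established just before the lemma, rather than from a rescaling of Lemma~\ref{lem: unif l infty}.
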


\begin{proof}
The fact that $\{\tilde v \neq 0\}$ is open follows from the continuity of $\tilde v$. Let now $\omega$ be compactly contained in $\{\tilde v \neq 0\}$. For concreteness, let us suppose that $\tilde v>\delta$ in $\overline{\omega}$. Then, by uniform convergence, $\tilde w_n \ge \delta/2$ in $\overline{\omega}$, and therefore equation \eqref{eq w tilde} pass to the limit, with the right hand side which tends to $0$ uniformly in $\overline{\omega}$. That is, $\tilde v$ is harmonic in $\omega$. 
\end{proof}

The behavior of $\tilde v$ on $\{\tilde v=0\}$ will be controlled using a suitable variation of domain formula. To this purpose, we shall prove that $\tilde w_n$ converges to $\tilde v$ strongly in $H^1_{\loc}(\R^N)$. A first step in this direction is the following:

\begin{lemma}\label{lem: weak conv Hol1}
Up to a subsequence, we have that $\tilde w_n \weak \tilde v$ weakly in $H^1_{\loc}(\R^N)$.
\end{lemma}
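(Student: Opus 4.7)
The plan is to establish a uniform $H^1_{\loc}(\R^N)$ bound on $\{\tilde w_n\}$ via a Caccioppoli-type estimate, and then combine it with the already-established uniform convergence $\tilde w_n \to \tilde v$ on compact sets to identify the weak limit.

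First I would fix a compact set $K \subset \R^N$ and a cutoff $\varphi \in C^\infty_c(\R^N)$ with $\varphi \equiv 1$ on $K$, so that $\supp \varphi \subset \Omega_n$ for $n$ large. The aim is to test the equation \eqref{eq w tilde} against $\varphi^2 \tilde w_n$: under (h1) this is just the weak formulation of \eqref{eq w tilde 1}, whose right-hand side is bounded on the support of $\varphi$; under (h2) the equation only holds on $\{\tilde w_n \ne 0\}$, so I would use the truncation $T_\delta(s) = (|s|-\delta)^+ \sign s$, test against $\varphi^2 T_\delta(\tilde w_n)$, and pass to $\delta \to 0^+$, exploiting that $\tilde w_n \in C^{1,\alpha}_{\loc}$ and that $\tilde w_n \tilde g_n(\tilde w_n)$ extends continuously by $0$ on the nodal set and is locally bounded. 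Either way one arrives at
\[
\int_{\R^N} \varphi^2 |\nabla \tilde w_n|^2\, dx = -2 \int_{\R^N} \varphi\, \tilde w_n\, \nabla \tilde w_n \cdot \nabla \varphi\, dx + \int_{\R^N} \varphi^2 \tilde w_n \bigl( \tilde f_n + k_n \tilde g_n(\tilde w_n) \bigr)\, dx,
\]
with the convention $\tilde f_n \equiv 0$ in case (h2).

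Next I would bound the right-hand side. The decisive estimate is that in both cases
\[
|s\, \tilde g_n(s)| \le (\lambda_+ + \lambda_-)\, |s|^q
\]
(in (h1) directly from the regularization, since $s\tilde g_n(s) = (\lambda_+ (s^+)^2 + \lambda_-(s^-)^2)/(\tilde\eps_n^2+s^2)^{(2-q)/2}$; in (h2) from the identity $s \tilde g_n(s) = \alpha_n(\lambda_+(s^+)^q + \lambda_-(s^-)^q)$ with $\{\alpha_n\}$ bounded by hypothesis). Thanks to the uniform $L^\infty_{\loc}$ control on $\tilde w_n$ inherited from the uniform convergence $\tilde w_n \to \tilde v$, this quantity is locally bounded. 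Combining with $k_n \to 0$, $\|\tilde f_n\|_{L^\infty} \to 0$, and Young's inequality
\[
2\, |\varphi\, \tilde w_n\, \nabla \tilde w_n \cdot \nabla \varphi| \le \tfrac12 \varphi^2 |\nabla \tilde w_n|^2 + 2\, \tilde w_n^2\, |\nabla \varphi|^2,
\]
I absorb half of the gradient term on the left and obtain $\|\nabla \tilde w_n\|_{L^2(K)} \le C_K$ uniformly in $n$.

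Finally, by weak compactness a subsequence satisfies $\tilde w_n \weak w$ in $H^1_{\loc}(\R^N)$; by Rellich and the uniform convergence already proved, $\tilde w_n \to \tilde v$ strongly in $L^2_{\loc}$, so $w = \tilde v$ by uniqueness of the limit, concluding the proof. I expect the only genuine obstacle to lie in case (h2), where the singularity of $\tilde g_n$ on $\{\tilde w_n = 0\}$ forbids a naive weak formulation; the truncation procedure described above, combined with the integrability of the product $\tilde w_n \tilde g_n(\tilde w_n)$ coming from the local $L^\infty$ bound, is the standard device to circumvent this difficulty.
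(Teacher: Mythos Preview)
Your proposal is correct and follows essentially the same approach as the paper: a Caccioppoli-type estimate obtained by testing the equation against $\varphi^2 \tilde w_n$, the key pointwise bound $|s\,\tilde g_n(s)| \le C|s|^q$, and identification of the weak limit via the already-established locally uniform convergence. The only cosmetic difference is in the truncation device used in case (h2): the paper passes to $|\tilde w_n|$ and tests with $|\tilde w_n|\varphi^2\psi_\delta(|\tilde w_n|)$ for a smooth cutoff $\psi_\delta$ (dropping a nonnegative term to obtain an inequality), whereas you test directly with $\varphi^2 T_\delta(\tilde w_n)$ and recover an equality in the limit $\delta\to 0^+$ via Stampacchia's lemma.
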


\begin{proof}
We show that $\{\tilde w_n\}$ is bounded in $H^1(K)$, where $K$ is an arbitrary compact set of $\R^N$. We test equation \eqref{eq w tilde} with $\tilde w_n \varphi^2$, where $\varphi \in C^\infty_c(\R^N)$ is a cut-off function with $\varphi \equiv 1$ in $K$. In case (h1) holds, we have by \eqref{eq w tilde 1}
\begin{align*}
\int_{\Omega_n} |\nabla \tilde w_n|^2 \varphi^2 + 2 \tilde w_n \varphi\langle \nabla \tilde w_n,  \nabla \varphi \rangle&= \int_{\Omega_n} \tilde f_n \tilde w_n \varphi^2 + k_n \tilde g_n(\tilde w_n) w_n \varphi^2 \\
& \le C \|\tilde f_n\|_{L^\infty(\Omega_n)} + C k_n \int_{\Omega_n} \frac{\tilde w_n^2 \varphi^2 }{\left( \tilde \eps_n^2 + \tilde w_n^2 \right)^\frac{2-q}2} \\
& \le C \|\tilde f_n\|_{L^\infty(\Omega_n)} + C k_n \int_{\supp \, \varphi} |\tilde w_n|^{q} \to 0 
\end{align*}
as $n \to \infty$, where we used the facts that $k_n\to 0$, and $\{\tilde w_n\}$ is locally bounded. Therefore, 
\[
\int_{K} |\nabla \tilde w_n|^2 \le \int_{\Omega_n} |\nabla (\tilde w_n \varphi)|^2 \le o(1) + \int_{\supp \, \varphi} \tilde w_n^2 |\nabla \varphi|^2 \le C,
\]
as desired. If (h2) holds, we have to face the additional difficulty that the equation for $\tilde w_n$ is satisfied only in $\{ \tilde w_n \neq 0\}$. We observe at first that
\[
-\Delta |\tilde w_n| = \alpha_n \left( \lambda_+ (\tilde w_n^+)^{q-1} + \lambda_- (\tilde w_n^-)^{q-1} \right) \qquad \text{in $\{|\tilde w_n|>0\}$}.
\]
Then we take a family of smooth functions $\psi_\delta: [0,+\infty) \to [0,1]$, with $\delta>0$ small, such that
\[
\psi_\delta(t) = 0 \quad \text{if $0 \le t \le \delta$}, \quad \psi_\delta(t) = 1 \quad \text{if $t \ge 2\delta$}, \quad \psi_\delta'(t) \ge 0 \quad \text{for every $t$}.
\] 
If $\varphi \in C^\infty_c(\R^N)$ with $\varphi \equiv 1$ on $K$, then $|\tilde w_n| \varphi^2 \psi_\delta(|\tilde w_n|)  \in H_0^1(\{|\tilde w_n|>0\})$ is an admissible nonnegative test function for the equation of $|\tilde w_n|$, and hence we obtain
\begin{multline*}
\int_{\{|\tilde w_n| >0\}} \psi_\delta(|\tilde w_n|) \varphi^2 |\nabla |\tilde w_n||^2 + 2 |\tilde w_n|\psi_\delta(|\tilde w_n|) \varphi \langle \nabla |\tilde w_n|, \varphi \rangle + |\tilde w_n| \varphi^2 \psi_\delta'(|\tilde w_n|) |\nabla |\tilde w_n||^2 \\
\le C k_n \int_{\{|\tilde w_n|>0\}} \psi_\delta(|\tilde w_n|) |\tilde w_n|^{q} \varphi^2.
\end{multline*}
Since the last term on the left hand side is nonnegative, we deduce that
\[
\int_{\{|\tilde w_n| >0\}} \psi_\delta(|\tilde w_n|) \varphi^2 |\nabla |\tilde w_n||^2 + 2|\tilde w_n|\psi_\delta(|\tilde w_n|) \varphi \langle \nabla |\tilde w_n|, \varphi \rangle
\le C k_n \int_{\{|\tilde w_n| >0\}} \psi_\delta(|\tilde w_n|) |\tilde w_n|^{q} \varphi^2,
\]
and, passing to the limit as $\delta \to 0^+$, we obtain
\[
\int_{\Omega_n} \varphi^2 |\nabla |\tilde w_n||^2  + 2 |\tilde w_n| \varphi\langle \nabla |\tilde w_n|,  \nabla \varphi \rangle \le C k_n \int_{\supp \, \varphi}  |\tilde w_n|^{q}.
\]
Therefore, also assuming (h2) we have that
\[
\int_{K} |\nabla \tilde w_n|^2 = \int_{K} |\nabla |\tilde w_n||^2  \le \int_{\Omega_n} |\nabla (|\tilde w_n| \varphi)|^2 \le o(1) + \int_{\supp \, \varphi} |\tilde w_n|^2 |\nabla \varphi|^2 \le C,
\]
and the thesis follows.
\end{proof}

Let us now introduce, for $w \in H^1_{\loc}(\R^N)$, 
\[
H(w,x_0,r) := \int_{S_r(x_0)} w^2.
\]

\begin{lemma}\label{lem: der H 2}
Let $w \in H^1_{\loc}(\R^N) \cap C^0(\R^N)$ such that
\[
-\Delta w = 0 \quad \text{in $\R^N \setminus \{w=0\}$}.
\]
Then the function $H(w,x_0,\,\cdot)$ is absolutely continuous, and 
\[
\frac{d}{dr} \left( \frac{H(w,x_0,r)}{r^{N-1}} \right) =  \frac{2}{r^{N-1}} \int_{B_r(x_0)} |\nabla w|^2.
\]
for every $x_0 \in \R^N$ and $r>0$.
\end{lemma}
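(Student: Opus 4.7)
Translating so that $x_0 = 0$, the plan splits into two independent pieces: (A) show that $r \mapsto H(w,0,r)/r^{N-1}$ is absolutely continuous with $\frac{d}{dr}(H/r^{N-1}) = \frac{2}{r^{N-1}}\int_{S_r} w\,\partial_\nu w\, d\sigma$ at a.e.\ $r > 0$, and (B) prove the Rellich-type identity $\int_{S_r} w\,\partial_\nu w\, d\sigma = \int_{B_r} |\nabla w|^2\, dx$ for a.e.\ $r > 0$. Combining (A) and (B), and noting that the right-hand side of the stated formula is continuous in $r$, yields the derivative identity, while absolute continuity of $H$ itself follows from $H = r^{N-1}(H/r^{N-1})$. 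Part (A) is standard for $w \in H^1_{\loc}$: writing $H(w,0,r)/r^{N-1} = \int_{S_1} w(r\theta)^2\, d\sigma(\theta)$, a Fubini argument applied to $|\nabla w|^2$ shows that for $\mathcal{H}^{N-1}$-a.e.\ $\theta \in S_1$ the map $r \mapsto w(r\theta)$ is locally absolutely continuous with derivative $\nabla w(r\theta)\cdot\theta \in L^2_{\loc}$; differentiating under the integral and returning to Cartesian coordinates gives the formula in (A).

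The core of the proof is (B). My strategy is to first establish the integrated Pohozaev-type identity
\[
\int_{B_r} (r - |x|)\, |\nabla w|^2\, dx \,=\, \int_{B_r} w\, \nabla w \cdot \tfrac{x}{|x|}\, dx \qquad \text{for every } r > 0,
\]
and then to differentiate both sides in $r$ (the left side differentiates to $\int_{B_r} |\nabla w|^2$, the right to $\int_{S_r} w\,\partial_\nu w$), which yields (B). To prove the integrated identity, the key observation is that $-\Delta w = 0$ in the open set $\{w \ne 0\}$, where $w$ is classically harmonic and hence smooth by elliptic regularity. I plan to test this equation against
\[
\eta_\delta(x) \,:=\, w(x)\, \phi(x)\, \psi_\delta(|w(x)|), \qquad \phi(x) = (r - |x|)_+,
\]
where $\psi_\delta : [0,\infty) \to [0,1]$ is a smooth cutoff with $\psi_\delta \equiv 0$ on $[0,\delta]$, $\psi_\delta \equiv 1$ on $[2\delta,\infty)$, $\psi_\delta' \ge 0$, and $t\,\psi_\delta'(t)$ bounded uniformly in $\delta$. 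The function $\eta_\delta$ is Lipschitz with compact support inside $\{|w| \ge \delta\} \cap \overline{B_r}$, which lies at positive distance from $\{w = 0\}$ and is contained in the open region where $w$ is smooth; it is therefore an admissible test function. Using $\nabla w \cdot \nabla|w| = \sign(w)\,|\nabla w|^2$ on $\{w \neq 0\}$ and $\nabla \phi = -(x/|x|)\,\chi_{B_r}$, the identity $\int \nabla w \cdot \nabla \eta_\delta = 0$ expands into three terms:
\[
\int \phi\,\psi_\delta(|w|)\,|\nabla w|^2 \,-\, \int_{B_r} \psi_\delta(|w|)\, w\, \nabla w \cdot \tfrac{x}{|x|} \,+\, \int \phi\, |w|\,\psi_\delta'(|w|)\,|\nabla w|^2 \,=\, 0.
\]

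The main obstacle is the passage to the limit $\delta \to 0^+$. The first two terms converge to $\int_{B_r}(r-|x|)|\nabla w|^2$ and to $-\int_{B_r} w\,\nabla w \cdot x/|x|$ respectively by monotone and dominated convergence, using that $\nabla w = 0$ a.e.\ on $\{w=0\}$ (Stampacchia) and that $w\,\nabla w \in L^1_{\loc}$. The delicate third term is a priori singular because $\psi_\delta' \sim 1/\delta$; however, $|w|\,\psi_\delta'(|w|)$ is bounded uniformly in $\delta$ and is supported on the shrinking set $\{\delta \le |w| \le 2\delta\}$. Therefore the third term is dominated by a constant times $\int_{\{|w| \le 2\delta\} \cap B_r} (r - |x|)\,|\nabla w|^2$, which vanishes as $\delta \to 0$ by dominated convergence (again using $\nabla w = 0$ a.e.\ on $\{w=0\}$). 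This yields the integrated identity and, via differentiation in $r$, part (B), completing the proof.
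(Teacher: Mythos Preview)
Your argument is correct. Both your proof and the paper's hinge on cutting off near the zero set $\{w=0\}$ and exploiting the interior smoothness of $w$ on $\{w\neq 0\}$, but the implementations differ. The paper truncates the \emph{function} itself, working with $w_\eps^+:=(w-\eps)^+$ (and analogously $w_\eps^-$): since $w$ is $C^2$ on $\{w\neq 0\}$, the function $(w_\eps^+)^2$ is $C^1$, so one may compute directly $\frac{d}{dr}\big(H(w_\eps^+,x_0,r)/r^{N-1}\big)=\frac{2}{r^{N-1}}\int_{B_r}|\nabla w_\eps^+|^2$, integrate in $r$, send $\eps\to 0$ by monotone convergence, and finally add the $w^+$ and $w^-$ contributions. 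Your route instead truncates inside the \emph{test function}, via the cutoff $\psi_\delta(|w|)$ combined with the spatial cutoff $\phi=(r-|x|)_+$, to first obtain an integrated (in $r$) identity which you then differentiate. The paper's approach is a bit shorter and avoids the two-step ``integrate then differentiate'' manoeuvre, while yours separates more transparently the purely $H^1$ part (your (A)) from the part that genuinely uses harmonicity in $\{w\neq 0\}$ (your (B)); your method also treats $w^+$ and $w^-$ simultaneously and would adapt more readily to situations where the equation on $\{w\neq 0\}$ is merely weak rather than classical.
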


\begin{proof}
By regularity, $w \in C^2(\R^N \setminus \{w=0\})$. Thus, if we define $w_\eps^+:= (w-\eps)^+$, it is not difficult to check that $(w_\eps^+)^2$ is of class $C^1(\R^N)$ for every $\eps>0$. Notice also that
\[
\nabla w_\eps^+ = \begin{cases} \nabla w & \text{in $\{w>\eps\}$} \\ 0 & \text{in $\{w \le \eps\}$}, \end{cases}  
\quad \Delta w_\eps^+ = \begin{cases} \Delta w & \text{in $\{w>\eps\}$} \\ 0 & \text{a.e. in $\{w \le \eps\}$}, \end{cases}.
\]
Hence the derivative of $H(w_\eps,x_0,\,\cdot)$ can be computed directly, and we have
\begin{align*}
\frac{d}{dr} \left( \frac{H(w_\eps^+,x_0,r)}{r^{N-1}} \right) &= \frac{2}{r^{N-1}} \int_{S_r(x_0)} w_\eps^+ \pa_\nu w_\eps^+  = \frac{2}{r^{N-1}} \int_{B_r(x_0)} |\nabla w_\eps^+|^2.
\end{align*}
Integrating in $(r_1,r_2)$, with $r_1$ and $r_2$ arbitrarily chosen, we infer that
\[
\frac{H(w_\eps^+,x_0,r_2)}{r_2^{N-1}}- \frac{H(w_\eps^+,x_0,r_1)}{r_1^{N-1}} = \int_{r_1}^{r_2} \left(\frac{2}{r^{N-1}} \int_{B_r(x_0)} |\nabla w_\eps^+|^2\right) \,dr,
\]
and passing to the limit as $\eps \to 0$ we deduce by monotone convergence that the same formula holds with $w^+$ instead of $w_\eps^+$. In the same way, we can prove that an analogue formula holds for $w^-$, and adding term by term the thesis follows.
\end{proof}

A similar statement holds for the functions $\tilde w_n$ under assumption (h2).

\begin{lemma}\label{lem: der H (h2)}
Assume that (h2) holds. Then
\[
\frac{d}{dr} H(\tilde w_n,x_0,r) =  \frac{N-1}{r}H(\tilde w_n,x_0,r) + 2 \int_{B_r(x_0)} \left(|\nabla \tilde w_n|^2 -k_n \tilde w_n \tilde g_n(\tilde w_n) \right),
\]
\end{lemma}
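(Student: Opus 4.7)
The proof I envision follows the template of Lemma \ref{lem: der H 2}, adapted to account for the nonzero right-hand side $-\Delta\tilde w_n = k_n\tilde g_n(\tilde w_n)$ that holds classically on $\{\tilde w_n\neq 0\}$. First, since $u\in C^{1,\alpha}(B_1)$ by hypothesis (h2), the rescaled function $\tilde w_n$ lies in $C^{1,\alpha}_{\loc}(\Omega_n)$. Writing $H(\tilde w_n,x_0,r) = r^{N-1}\int_{S_1}\tilde w_n(x_0+r\sigma)^2\,d\sigma$ and differentiating under the integral sign yields
\[
\frac{d}{dr}H(\tilde w_n,x_0,r) = \frac{N-1}{r}H(\tilde w_n,x_0,r) + 2\int_{S_r(x_0)}\tilde w_n\,\pa_\nu\tilde w_n\,d\sigma,
\]
so the lemma reduces to the Green-type identity
\[
\int_{S_r(x_0)}\tilde w_n\,\pa_\nu\tilde w_n\,d\sigma = \int_{B_r(x_0)}\left(|\nabla\tilde w_n|^2 - k_n\tilde w_n\tilde g_n(\tilde w_n)\right)dx.
\]

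Since $\tilde w_n$ is only $C^{1,\alpha}$ globally while the PDE holds classically only off $\{\tilde w_n=0\}$ (with a singular right-hand side there), I would mimic the truncation argument of Lemma \ref{lem: der H 2}: introduce $(\tilde w_n-\eps)^+$ and $(\tilde w_n+\eps)^-$ for $\eps>0$. On $\{\tilde w_n>\eps\}$, $\tilde w_n\in C^2$ by elliptic regularity (since $\tilde g_n(\tilde w_n)$ is continuous there), and the function $((\tilde w_n-\eps)^+)^2$ is globally $C^1$ with gradient $2(\tilde w_n-\eps)^+\nabla\tilde w_n$; this gradient vanishes continuously across $\{\tilde w_n=\eps\}$, so no singular distribution arises on that set, and the distributional Laplacian is represented by the bounded function $2|\nabla\tilde w_n|^2\chi_{\{\tilde w_n>\eps\}} - 2k_n(\tilde w_n-\eps)^+\tilde g_n(\tilde w_n)$. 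The divergence theorem then delivers
\[
\int_{S_r(x_0)}(\tilde w_n-\eps)^+\pa_\nu\tilde w_n\,d\sigma = \int_{B_r(x_0)\cap\{\tilde w_n>\eps\}}\!\left(|\nabla\tilde w_n|^2 - k_n(\tilde w_n-\eps)\tilde g_n(\tilde w_n)\right)dx,
\]
with an analogous identity for $(\tilde w_n+\eps)^-$. Summing the two and letting $\eps\to 0^+$, dominated convergence applies because $|\nabla\tilde w_n|^2$ is locally bounded by the $C^{1,\alpha}$ estimate and $\tilde w_n\tilde g_n(\tilde w_n) = \alpha_n\bigl(\lambda_+(\tilde w_n^+)^q + \lambda_-(\tilde w_n^-)^q\bigr)$ extends continuously across the zero set as $0$, hence is locally bounded. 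Combined with the pointwise identity $\tilde w_n\,\pa_\nu\tilde w_n = \tilde w_n^+\,\pa_\nu\tilde w_n^+ + \tilde w_n^-\,\pa_\nu\tilde w_n^-$, this yields the required Green identity.

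The main technical point is the rigorous justification of the divergence theorem for the merely $C^1$ function $((\tilde w_n-\eps)^+)^2$. I would handle this by a standard mollification of the scalar profile $t\mapsto((t-\eps)^+)^2$, or equivalently by first integrating on the smoother open set $\{\tilde w_n>\eps+\delta\}$, where $\tilde w_n\in C^2$, and sending $\delta\to 0^+$: the boundary terms over $\{\tilde w_n=\eps+\delta\}$ vanish because $(\tilde w_n-\eps)\to\delta\to 0$ there, leaving only the interior contributions claimed above.
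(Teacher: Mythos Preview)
Your proof is correct and essentially follows the template of Lemma~\ref{lem: der H 2}, as you say. The paper takes a slightly different and more economical route: it simply forward-references Proposition~\ref{lem: def H 1}, observing that under (h2) the function $\tilde w_n$ is a constant multiple and rescaling of the stationary $C^{1,\alpha}$ solution $u$, hence is itself a stationary $C^{1,\alpha}$ solution of an equation of type~\eqref{eq mu} with coefficients $\mu_\pm = k_n\alpha_n\lambda_\pm$. Proposition~\ref{lem: def H 1} then yields the formula for $H'$ directly, and its proof handles the Green identity in one stroke by invoking a general divergence theorem (from \cite{HoMiTa}) for the continuous vector field $v\nabla v$ with $L^1$ distributional divergence, bypassing the $\eps$-truncation altogether.

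What your approach buys is self-containment within Section~\ref{sec: H bounds} and the avoidance of the external divergence-theorem reference: the argument stays at the level of classical calculus on the $C^2$ region $\{\tilde w_n>\eps\}$ plus dominated convergence. What the paper's approach buys is brevity and a unified treatment of all such identities (the same proposition handles both $H'$ and $D_t'$, the latter via the domain-variation formula). Both are perfectly rigorous; the paper's choice reflects the fact that the machinery of Section~\ref{sub: weiss} will be needed anyway.
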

\begin{proof}
We refer to the forthcoming Proposition \ref{lem: def H 1}, which contains the thesis as a particular case. 
\end{proof}

The importance of the previous lemmas stays in the fact that they yields strong $H^1$ convergence of $\tilde w_n$ to $\tilde v$.

\begin{lemma}\label{lem: strong Hol1}
Up to a subsequence, $\tilde w_n \to \tilde v$ strongly in $H^1_{\loc}(\R^N)$.
\end{lemma}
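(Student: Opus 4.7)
The plan is to upgrade the weak convergence from Lemma \ref{lem: weak conv Hol1} by establishing the convergence of the $L^2$ norms of the gradients. The main tool is the derivative formula for the spherical mean $H$: by uniform convergence on compact sets, $H(\tilde w_n,x_0,r)\to H(\tilde v,x_0,r)$ for every $x_0\in\R^N$ and $r>0$.

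First I would rewrite Lemma \ref{lem: der H (h2)} in the normalized form
\[
\frac{d}{dr}\left(\frac{H(\tilde w_n,x_0,r)}{r^{N-1}}\right)=\frac{2}{r^{N-1}}\int_{B_r(x_0)}\left(|\nabla \tilde w_n|^2-k_n\tilde w_n\tilde g_n(\tilde w_n)\right),
\]
with the analogous identity (carrying an additional term $\tilde f_n\tilde w_n$) holding under (h1), where $\tilde w_n$ is smooth. The key point is that the ``error'' integrand is negligible: we have $|\tilde w_n\tilde g_n(\tilde w_n)|\le C|\tilde w_n|^{q}$ uniformly on compacta, either from the explicit form of the regularized nonlinearity in case (h1), or from the uniform bound on $\alpha_n$ in case (h2); since $k_n\to 0^+$ and $\|\tilde f_n\|_{L^\infty(\Omega_n)}\to 0$, and $\{\tilde w_n\}$ is locally uniformly bounded, the corresponding integrals vanish in the limit on every compact set.

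I would then integrate the normalized identity on an arbitrary interval $(r_1,r_2)\subset(0,\infty)$ and pass to the limit using the pointwise convergence of $H$. Comparing with Lemma \ref{lem: der H 2} applied to the harmonic function $\tilde v$ (which is licit by Lemma \ref{lem: ar non 0}), this gives
\[
\int_{r_1}^{r_2}\frac{1}{r^{N-1}}\int_{B_r(x_0)}|\nabla \tilde w_n|^2\,dr\;\longrightarrow\;\int_{r_1}^{r_2}\frac{1}{r^{N-1}}\int_{B_r(x_0)}|\nabla \tilde v|^2\,dr.
\]
Since $(r_1,r_2)$ is arbitrary, differentiation of absolutely continuous functions gives the pointwise convergence $\int_{B_r(x_0)}|\nabla \tilde w_n|^2\to\int_{B_r(x_0)}|\nabla \tilde v|^2$ for a.e. $r>0$. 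Combined with the weak $H^1_{\loc}$ convergence of Lemma \ref{lem: weak conv Hol1} and Radon's theorem (or the Hilbert-space identity $\|a_n-a\|^2=\|a_n\|^2-2\langle a_n,a\rangle+\|a\|^2$), this upgrades to strong $L^2$ convergence of $\nabla \tilde w_n$ on such balls, and hence to strong $H^1_{\loc}(\R^N)$ convergence after extracting a diagonal subsequence along a sequence of radii filling $\R^N$.

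The main obstacle lies in case (h2), where the pointwise equation for $\tilde w_n$ only holds on $\{\tilde w_n\ne 0\}$; the derivative formula on which everything rests cannot be deduced from a naive integration by parts but must be obtained from the variation-of-domain identity that makes $\tilde w_n$ a stationary solution of the rescaled problem. This is precisely what Lemma \ref{lem: der H (h2)} (via the forthcoming Proposition \ref{lem: def H 1}) provides, and it is the only nontrivial input needed for the argument to go through uniformly in both (h1) and (h2).
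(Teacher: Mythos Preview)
Your overall strategy matches the paper's proof: integrate the derivative identity for $H(\tilde w_n,x_0,r)/r^{N-1}$ over $(r_1,r_2)$, pass to the limit using uniform convergence and $k_n,\|\tilde f_n\|_\infty\to 0$, compare with Lemma~\ref{lem: der H 2} for $\tilde v$, and then upgrade weak to strong $H^1$ convergence via norm convergence on balls. The handling of case (h2) via Lemma~\ref{lem: der H (h2)} is also exactly what the paper does.

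There is, however, a genuine gap in the step where you pass from
\[
\int_{r_1}^{r_2}\frac{1}{r^{N-1}}\int_{B_r(x_0)}|\nabla \tilde w_n|^2\,dr\;\longrightarrow\;\int_{r_1}^{r_2}\frac{1}{r^{N-1}}\int_{B_r(x_0)}|\nabla \tilde v|^2\,dr
\]
for all $(r_1,r_2)$ to the pointwise statement $\int_{B_r(x_0)}|\nabla \tilde w_n|^2\to\int_{B_r(x_0)}|\nabla \tilde v|^2$ for a.e.\ $r$. Convergence of all interval integrals only gives weak-$*$ convergence of the integrands as measures, not a.e.\ convergence; ``differentiation of absolutely continuous functions'' does not bridge this, since pointwise convergence of primitives $F_n\to F$ says nothing about pointwise convergence of $F_n'$. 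The paper closes this gap by combining the integral identity with the pointwise inequality coming from weak lower semicontinuity,
\[
\int_{B_r(x_0)}|\nabla \tilde v|^2\le \liminf_{n\to\infty}\int_{B_r(x_0)}|\nabla \tilde w_n|^2\qquad\text{for every }r,
\]
and then applying Fatou's lemma to deduce that the nonpositive integrand $\int_{B_r}|\nabla\tilde v|^2-\liminf_n\int_{B_r}|\nabla\tilde w_n|^2$ has zero integral over every $(r_1,r_2)$, hence vanishes a.e. This yields $\liminf_n\int_{B_r}|\nabla\tilde w_n|^2=\int_{B_r}|\nabla\tilde v|^2$ for a.e.\ $r$, and only then does one extract a subsequence realizing the $\liminf$. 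You invoke the weak convergence only afterwards, for the Hilbert-space identity, but it is needed earlier, precisely to supply the sign that makes Fatou bite.
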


\begin{proof}
We suppose at first that (h1) holds. Let $x_0 \in \R^N$, and let $0<r_1<r_2$. It is not difficult to compute the derivative of $H(\tilde w_n,x_0,r)$ using \eqref{eq w tilde 1} and the smoothness of $\tilde w_n$, deducing that
\[
\frac{H(\tilde w_n,x_0,r_2)}{r_2^{N-1}}- \frac{H(\tilde w_n,x_0,r_1)}{r_1^{N-1}} = \int_{r_1}^{r_2} \left(\frac{2}{r^{N-1}} \int_{B_r(x_0)} |\nabla \tilde w_n|^2 - k_n \tilde w_n \tilde  g_n(\tilde w_n) - \tilde f_n \tilde w_n\right)dr. 
\]
We pass to the limit as $n \to \infty$: recalling that $k_n \to 0$, that $\|\tilde f_n\|_{L^\infty(\Omega_n)} \to 0$, and that $\tilde w_n \to \tilde v$ locally uniformly, we have by definition ot $\tilde g_n$ that
\begin{equation}\label{18 1 10}
\frac{H(\tilde v,x_0,r_2)}{r_2^{N-1}}- \frac{H(\tilde v,x_0,r_1)}{r_1^{N-1}} = \lim_{n \to \infty} \int_{r_1}^{r_2} \left(\frac{2}{r^{N-1}} \int_{B_r(x_0)} |\nabla \tilde w_n|^2 \right)dr.
\end{equation}
On the other hand, by Lemmas \ref{lem: ar non 0} and \ref{lem: weak conv Hol1} the function $\tilde v$ fulfills the assumptions of Lemma \ref{lem: der H 2}, whence
\begin{equation}\label{18 1 11}
\frac{H(\tilde v,x_0,r_2)}{r_2^{N-1}}- \frac{H(\tilde v,x_0,r_1)}{r_1^{N-1}} = \int_{r_1}^{r_2} \left(\frac{2}{r^{N-1}} \int_{B_r(x_0)} |\nabla \tilde v|^2 \right)dr.
\end{equation}
Comparing \eqref{18 1 10} and \eqref{18 1 11}, we deduce that
\[
\lim_{n \to \infty} \int_{r_1}^{r_2} \left(\frac{2}{r^{N-1}} \int_{B_r(x_0)} |\nabla \tilde w_n|^2 \right)dr = \int_{r_1}^{r_2} \left(\frac{2}{r^{N-1}} \int_{B_r(x_0)} |\nabla \tilde v|^2 \right)dr.
\]
On the other hand, by weak convergence and using Fatou's lemma
\begin{align*}
\int_{r_1}^{r_2} \left(\frac{2}{r^{N-1}} \int_{B_r(x_0)} |\nabla \tilde v|^2 \right)dr &\le \int_{r_1}^{r_2} \left(\frac{2}{r^{N-1}} \liminf_{n \to \infty} \int_{B_r(x_0)} |\nabla \tilde w_n|^2 \right)dr \\
& = \int_{r_1}^{r_2} \liminf_{n \to \infty} \left(\frac{2}{r^{N-1}} \int_{B_r(x_0)} |\nabla \tilde w_n|^2  \right)dr \\
& \le \liminf_{n \to \infty} \int_{r_1}^{r_2} \left(\frac{2}{r^{N-1}} \int_{B_r(x_0)} |\nabla \tilde w_n|^2 \right)dr.
\end{align*}
It follows that all the previous inequalities must be equalities, and in particular
\[
\int_{r_1}^{r_2} \frac{2}{r^{N-1}}\left( \int_{B_r(x_0)} |\nabla \tilde v|^2- \liminf_{n \to \infty} \int_{B_r(x_0)} |\nabla \tilde w_n|^2 \right)dr = 0.
\]
Since the term inside the bracket is nonpositive for every $r \in (r_1,r_2)$, we finally deduce that for almost every $r \in (r_1,r_2)$
\[
\int_{B_r(x_0)} |\nabla \tilde v|^2 = \liminf_{n \to \infty} \int_{B_r(x_0)} |\nabla \tilde w_n|^2.
\]
If necessary extracting a further subsequence, we infer that $\|\tilde w_n\|_{H^1(B_r(x_0))} \to \|\tilde v\|_{H^1(B_r(x_0))}$ for almost every $r \in (r_1,r_2)$, for every $0<r_1<r_2$. Together with the local weak $H^1$ convergence and Sobolev embedding, this yields local strong $H^1$ convergence.

In case (h2) is in force, the expression of the derivative of $H(\tilde w_n,x_0, \cdot)$ is given by Lemma \ref{lem: der H (h2)}. From this, it is not difficult to derive \eqref{18 1 10}, and appealing to Lemma \ref{lem: der H 2} we also have \eqref{18 1 11}. At this point we can proceed as under assumption (h1).
\end{proof}

We are finally ready for the:

\begin{proof}[Conclusion of the proof of Proposition \ref{prop: Hol 1}]
Let (h1) holds, and let $Y \in C^\infty_c(\R^N,\R^N)$. For any $n$ sufficiently large, so that $\supp \, Y \subset \Omega_n$, we multiply the equation \eqref{eq w tilde} by $\langle\nabla \tilde w_n, Y\rangle$ and, integrating by parts, we find: 

\begin{equation}\label{var dom n}
\int_{\Omega_n} \langle dY [\nabla \tilde w_n], \nabla \tilde w_n\rangle - \tilde f_n\langle \nabla\tilde w_n,Y\rangle- \div Y \left[ \frac12 |\nabla \tilde w_n|^2 - k_n \tilde G_n(\tilde w_n) \right] = 0,
\end{equation}

where $\tilde G_n$ is a primitive of $\tilde g_n$:
\[
\tilde G_n(w):= \lambda_+\frac{\left( \tilde \eps_n^2 + (w^+)^2\right)^\frac{q}2}{q}+ \lambda_-\frac{\left( \tilde \eps_n^2 + (w^-)^2\right)^\frac{q}2}{q},
\]

In other words, the domain variation formula holds true for $\tilde w_n$.
Taking the limit as $n \to \infty$, we deduce by local uniform and strong $H^1_{\loc}$ convergence that
\begin{equation}\label{var dom limit}
\int_{\R^N} \langle dY [\nabla \tilde v], \nabla \tilde v\rangle- \frac12 \div Y   |\nabla \tilde v|^2 = 0,
\end{equation}
where we also used the fact that $\tilde f_n, k_n \to 0$. By density, \eqref{var dom limit} holds also for every compactly supported $Y \in \mathrm{Lip}(\R^N,\R^N)$.

If we are under assumption (h2), the validity of a formula of type \eqref{var dom n} follows by scaling \eqref{var dom} (which holds by assumption), and by local uniform and strong $H^1$ convergence we still obtain \eqref{var dom limit}. 

At this point, both under (h1) and (h2), let us fix $x_0 \in \R^N$ and $r>0$. For any $\delta>0$, we define $\varphi_{\delta}: [0,+\infty) \to [0,1]$ and $\eta_\delta: \R^N \to \R$ by
\[
\varphi_{\delta}(t):=     \begin{cases} 1 & \text{if $0 \le t \le r$} \\ 0 & \text{if $t \ge r+\delta$} \\ 1-\frac{1}{\delta}(t-r) & \text{if $r<t<r+\delta$}, \end{cases} \quad \text{and} \quad \eta_\delta(x):= \varphi_\delta(|x-x_0|).
\] 
Taking $Y=(x-x_0) \eta_{\delta}$ in \eqref{var dom limit}, we infer that
\[
\int_{B_{r+\delta}(x_0)} \frac{2-N}2|\nabla \tilde v|^2 \eta_\delta + \langle \nabla \eta_\delta, \nabla \tilde v \rangle \langle x-x_0, \nabla \tilde v \rangle - \frac{1}{2} |\nabla \tilde v|^2 \langle \nabla \eta_\delta, x-x_0 \rangle  =0.
\]
Passing to the limit as $\delta \to 0^+$, we finally obtain
\[
\begin{split}
\int_{S_r(x_0)} |\nabla \tilde v|^2 & = \frac{N-2}{r} \int_{B_r(x_0)} |\nabla \tilde v|^2 + 2\int_{S_r(x_0)} (\pa_\nu \tilde v)^2,
\end{split}
\]
for every $x_0 \in \R^N$, for every $r>0$. Recalling that $\tilde v$ is harmonic in $\{\tilde v \neq 0\}$, it results that $(\tilde v^+,\tilde v^-)$ fulfills the assumptions of \cite[Theorem 1.1]{TavTer} in $\R^N$, whence it follows that $\tilde v$ is harmonic in the whole space $\R^N$. But $\tilde v$ is also globally $\alpha$-H\"older continuous and non-constant, ad this is in contradiction with the Liouville theorem for harmonic functions.
\end{proof}

\section{H\"older estimates for the derivatives}

In this section we complete the proof of Theorem \ref{prop: bounds}. The strategy is similar to the one used for Proposition \ref{prop: Hol 1}. Let $\alpha \in (0,q/(2-q))$, let us fix a compact set $K \subset B_\rho$, and let $\eta \in C^\infty_c(B_\rho)$ with $\eta \equiv 1$ on $K$, $0 \le \eta \le 1$. If we show that the sequence $\{(\eta u_n): n \in \N\}$ is bounded in $C^{1,\alpha}(\overline{B_\rho})$, then, using the fact that $\eta \equiv 1$ on $K$, we infer that $\{u_n\}$ is bounded in $C^{1,\alpha}(K)$; since $K$ has been arbitrarily chosen, Theorem \ref{prop: bounds} follows. Thus, from now on we focus on the boundedness of $\{(\eta u_n)\}$. Let $\bar x \in B_\rho \setminus \supp\,\eta$; we have $(\eta u_n)(\bar x) = |\nabla (\eta u_n)(\bar x)| = 0$ for every $n$. Therefore, if we show that $[\pa_{x_i}(\eta u_n)]_{C^{0,\alpha}(\overline{B_\rho})}$ is bounded for every $i$, we obtain boundedness in $C^{1,\alpha}$, as desired. Suppose then by contradiction that 
\begin{align*}
L_n& := \max_{i =1,\dots, N} \, \sup_{\substack{x,y \in \overline{B_\rho} \\ x \neq y}} \frac{|\pa_{x_i} (\eta u_n)(x) - \pa_{x_i}(\eta u_n)(y)|}{|x-y|^\alpha}  \\
& =  \frac{|\pa_{x_{i_n}} (\eta u_n)(x_n) - \pa_{x_{i_n}}(\eta u_n)(y_n)|}{|x_n-y_n|^\alpha} \to +\infty.
\end{align*}
Of course, the sequences $\{x_n\}$ and $\{y_n\}$ are different than those of the previous section. Up to a subsequence and a relabelling, we can assume that $i_n=1$ for every $n$, and moreover that $\eta(x_n)>0$ for every $n$. We also set $r_n = |x_n-y_n|$. We have $0< r_n < \textrm{diam} B_\rho$, and hence up to a subsequence $r_n \to \bar r \ge 0$.

In order to reach a contradiction, we introduce again two blow-up sequences:
\[
\begin{split}
& v_n(x):= \frac{\eta(x_n+r_n x)}{L_n r_n^{1+\alpha}} \left(u_n(x_n+r_n x)-u_n(x_n)\right), \\
& w_n(x):= \frac{\eta(x_n)}{L_n r_n^{1+\alpha}} \left(u_n(x_n+r_n x)-u_n(x_n)\right),
\end{split}
\]
both defined on the closure of the scaled domains $(B_\rho-x_n)/r_n =: \Omega_n$. Notice that, since $x_n \in \supp \, \eta \Subset B_\rho$, it results that $\Omega_n$ converges to a limit domain $\Omega_\infty$, which contains a ball $B_{R}=B_R(0)$ for some $R>0$. Moreover, if we know that $r_n \to 0$, we can infer that $\Omega_\infty=\R^N$. 

In what follows we collect the basic properties of $v_n$ and $w_n$. Let $D$ be any compact set of $\Omega_\infty$, and let $x,y \in D$. Then $x,y \in \Omega_n$ for sufficiently large $n$, and
\begin{align*}
|\pa_{x_i} v_n(x) - \pa_{x_i} v_n(y)| &\le \frac{1}{L_n r_n^\alpha} |\pa_{x_i}(\eta u_n)(x_n+r_nx) - \pa_{x_i}(\eta u_n)(x_n + r_n y)| \\
& + \left| \frac{u_n(x_n)}{L_n r_n^\alpha}\right| |\pa_{x_i} \eta(x_n+r_n x) - \pa_{x_i} \eta(x_n+r_n y)| \\
& \le |x-y|^\alpha + \left| \frac{u_n(x_n)}{L_n}\right| r_n^{1-\alpha} \ell |x-y|.
\end{align*}
where we used that $\eta$ is Lipschitz continuous with constant $\ell$. Since $D$ is compact, $L_n \to +\infty$, $\{u_n\}$ is bounded in $L^\infty(\supp \, \eta)$ (Lemma \ref{lem: unif l infty}), and $0 < r_n \le \textrm{diam} B_\rho$ for every $n$, there exists $\bar n=\bar n(D)$ such that, if $n > \bar n$, we have
\[
\left| \frac{u_n(x_n)}{L_n}\right| r_n^{1-\alpha} \ell \sup_{x,y \in D} |x-y|^{1-\alpha} \le 1;
\]
thus, we proved that 
\begin{equation}\label{unif const}
\forall D \Subset \Omega_\infty \ \exists \bar n= \bar n(D): \ n > \bar n \quad \implies \quad \sup_{\substack{x,y \in D \\ x \neq y}} \frac{|\pa_{x_i} v_n(x) - \pa_{x_i} v_n(y)|}{|x-y|^\alpha} \le 2.
\end{equation}

Furthermore, we also note that
\begin{equation}\label{grad non cos}
\begin{split}
\left| \pa_{x_1} v_n(0)  - \pa_{x_1} v_n\left(\frac{y_n-x_n}{r_n}\right)  \right|&= \left|\frac{1}{L_n r_n^\alpha} \left( \pa_{x_1} (\eta u_n)(x_n) - \pa_{x_1} (\eta u_n)(y_n) \right) \right.\\
& \qquad + \left. \frac{u_n(x_n)}{L_n r_n^\alpha} \left( \pa_{x_1} \eta(x_n) - \pa_{x_1} \eta(y_n) \right) \right|\\ 
& = 1+ O \left( \frac{\ell |u_n(x_n)| r_n^{1-\alpha}}{L_n}\right) =  \left| \frac{y_n-x_n}{r_n} \right|^\alpha + o(1)
\end{split}
\end{equation}
as $n \to \infty$. Hence, on any compact set $D$ of $\Omega_\infty$ the quantities $[\pa_{x_i} v_n]_{C^{0,\alpha}(D)}$ are uniformly bounded from above, and moreover $[\pa_{x_1} v_n]_{C^{0,\alpha}(\Omega_n)}$ is uniformly bounded also from below. Let us then define
\begin{equation}\label{def bar v}
\bar v_n(x):= v_n(x)- \langle\nabla v_n(0), x\rangle, \quad x \in \Omega_n.
\end{equation}
For every $n$ it results that $\bar v_n(0) = |\nabla \bar v_n(0)| = 0$, and $[\pa_{x_i} v_n]_{C^{0,\alpha}(D)} = [\pa_{x_i} \bar v_n]_{C^{0,\alpha}(D)}$ for every $D \subset \Omega_n$ compact, for every $i=1,\dots,N$. Therefore, by Ascoli-Arzel\'a theorem $\bar v_n \to \bar v$ in $C^{1,\gamma}_{\loc}(\Omega_\infty)$ as $n \to \infty$ up to a subsequence, for $0<\gamma<\alpha$. Passing to the limit inside \eqref{unif const}, we have that $\bar v$ is of class $C^{1,\alpha}(\Omega_\infty)$, with $[\pa_{x_i} v]_{C^{0,\alpha}(D)} \le 2$ for every $D \Subset \Omega_\infty$. Notice also that $(y_n-x_n)/r_n \in \supp\, \eta(x_n+r_n \cdot\,) \cap S_1$, and hence up to a further subsequence $(y_n-x_n)/r_n \to z \in \Omega_\infty$. By local $C^{1}$ convergence, \eqref{grad non cos} entails then
\[
\pa_{x_1} \bar v(0) - \pa_{x_1} \bar v(z) = 1,
\]
that is, $\bar v$ has non-constant gradient.

\begin{lemma}
It results that $r_n \to 0$ as $n \to \infty$, so that $\Omega_\infty=\R^N$.
\end{lemma}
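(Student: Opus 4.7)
I argue by contradiction and assume that, along the chosen subsequence, $r_n \to \bar r > 0$. The goal is to combine the scaling imbalance between $L_n \to +\infty$ and a bounded $r_n$ with the $L^\infty$ control already proved in Lemma~\ref{lem: unif l infty} to force the limit profile $\bar v$ to be trivial, which will contradict the non-constancy of its gradient established just above.

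Set $M_n := L_n r_n^{1+\alpha}$. Since $L_n \to +\infty$ and $r_n^{1+\alpha} \to \bar r^{\,1+\alpha} > 0$, one has $M_n \to +\infty$. On the other hand, by Lemma~\ref{lem: unif l infty} the sequence $\{u_n\}$ is uniformly bounded on any compact set containing $\supp\,\eta$, so the numerator in
\[
v_n(x) \;=\; \frac{\eta(x_n + r_n x)}{M_n}\bigl(u_n(x_n+r_n x) - u_n(x_n)\bigr)
\]
stays uniformly bounded on any compact $D \Subset \Omega_\infty$, while $M_n \to +\infty$. Therefore $v_n \to 0$ uniformly on compact subsets of $\Omega_\infty$.

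Recalling the definition \eqref{def bar v}, this gives, locally uniformly on $\Omega_\infty$,
\[
\langle \nabla v_n(0), x\rangle \;=\; v_n(x) - \bar v_n(x) \;\longrightarrow\; -\bar v(x),
\]
since $\bar v_n \to \bar v$ in $C^{1,\gamma}_{\loc}(\Omega_\infty)$. Evaluating on a basis, the vectors $\nabla v_n(0)$ converge to some $-p \in \R^N$, and $\bar v(x) = -\langle p, x\rangle$ on $\Omega_\infty$. However, by construction $\nabla \bar v_n(0) = \nabla v_n(0) - \nabla v_n(0) = 0$ for every $n$, hence $\nabla \bar v(0) = 0$, which forces $p = 0$ and $\bar v \equiv 0$. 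This contradicts the fact, proved in \eqref{grad non cos} and its consequence, that
\[
\pa_{x_1} \bar v(0) - \pa_{x_1} \bar v(z) = 1,
\]
where $z = \lim (y_n - x_n)/r_n \in \Omega_\infty$. Hence $\bar r = 0$, which yields $r_n \to 0$ and $\Omega_\infty = \R^N$.

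The only delicate point is verifying that $v_n$ tends to $0$ uniformly on compacts, which hinges precisely on the previously established uniform $L^\infty$ bound of Lemma~\ref{lem: unif l infty}; once this is in place, the rest of the argument is a direct reading of the decomposition $v_n = \bar v_n + \langle \nabla v_n(0), \cdot\rangle$ against the non-trivial limit behavior forced by \eqref{grad non cos}.
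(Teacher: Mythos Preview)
Your argument is correct and follows essentially the same route as the paper: assume $r_n\to\bar r>0$, use the uniform $L^\infty$ bound on $\{u_n\}$ from Lemma~\ref{lem: unif l infty} to force $v_n\to 0$ locally uniformly, deduce from $\bar v_n=v_n-\langle\nabla v_n(0),\cdot\rangle$ that $\bar v$ is linear, and contradict the non-constancy of $\nabla\bar v$ coming from \eqref{grad non cos}. Your extra observation that $\nabla\bar v(0)=0$ forces $\bar v\equiv 0$ is valid but unnecessary (linearity already gives constant gradient), and there is a harmless sign slip in writing $\bar v(x)=-\langle p,x\rangle$; neither affects the conclusion.
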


\begin{proof}
If $r_n \to \bar r > 0$, then by Lemma \ref{lem: unif l infty}
\[
\sup_{x \in \Omega_n} |v_n(x)| \le \frac{2 \|\eta\|_{L^\infty(B_\rho)} \|u_n\|_{L^\infty(\supp \eta)}}{ r_n^{1+\alpha} L_n} \le \frac{C}{ \bar r^{1+\alpha} L_n} \to 0
\]
as $n \to \infty$. In particular, $v_n \to 0$ uniformly on compact sets in $\Omega_\infty$. By definition \eqref{def bar v}, and taking the limit, we deduce that 
\begin{equation}\label{bar v uguale}
\bar v(x) = \lim_{n \to \infty} \langle\nabla v_n(0),x\rangle \qquad \text{for every $x \in \Omega_\infty$}.
\end{equation}
Recall now that $\Omega_\infty$ compactly contains a ball $B_R$ centered in the origin. If one component $\{\pa_{x_i} v_n(0)\}$ were unbounded, we would deduce from \eqref{bar v uguale} that
\[
|\bar v( R e_i )| = R \lim_{n \to \infty} |\langle\nabla v_n(0),e_i\rangle| = +\infty,
\] 
in contradiction with the fact that $\bar v \in C^{1,\alpha}(\overline{B_R})$. Therefore, $\{\nabla v_n(0)\}$ is bounded and up to a subsequence converges to a vector $\nu \in \R^N$, so that $\bar v(x) = \langle\nu, x\rangle$. This is in turn in contradiction with the fact that the gradient of $\bar v$ is not constant, and completes the proof.
\end{proof}

The main consequence is that $\Omega_\infty=\R^N$. Then, to sum up, so far we showed that $\bar v_n \to \bar v$ in $C^{1,\gamma}_{\loc}(\R^N)$ for $0<\gamma<\alpha$, and 
\begin{equation}\label{prop bar v}
[\pa_{x_i} \bar v]_{C^{0,\alpha}(\R^N)} \le 2 \quad \forall i, \text{ and } \exists z \in S_1 \text{ such that } \pa_{x_1} \bar v(0) - \pa_{x_1} \bar v(z) =1.
\end{equation}
Now we consider a new sequence
\[
\bar w_n(x):= w_n(x)- \langle\nabla w_n(0), x\rangle.
\]

\begin{lemma}
We have $\bar w_n \to \bar v$ locally uniformly in $\R^N$.
\end{lemma}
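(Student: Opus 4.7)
The main point is that $\bar w_n$ and $\bar v_n$ actually agree up to $w_n-v_n$, because $\nabla w_n(0)=\nabla v_n(0)$: indeed a direct differentiation gives
\[
\nabla v_n(0)=\frac{\nabla\eta(x_n)}{L_n r_n^{\alpha}}\bigl(u_n(x_n)-u_n(x_n)\bigr)+\frac{\eta(x_n)}{L_n r_n^{\alpha}}\nabla u_n(x_n)=\frac{\eta(x_n)}{L_n r_n^{\alpha}}\nabla u_n(x_n)=\nabla w_n(0),
\]
the first term being zero. Consequently $\bar w_n-\bar v_n=w_n-v_n$, and it suffices to show $w_n-v_n\to 0$ locally uniformly. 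Since $\bar v_n\to \bar v$ in $C^{1,\gamma}_{\loc}(\R^N)$ has already been established, this will immediately yield the lemma.

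To estimate $w_n-v_n$, I would exploit the Lipschitz continuity of $\eta$ together with the uniform H\"older bound from Proposition~\ref{prop: Hol 1}. Writing
\[
w_n(x)-v_n(x)=\frac{\eta(x_n)-\eta(x_n+r_n x)}{L_n r_n^{1+\alpha}}\bigl(u_n(x_n+r_n x)-u_n(x_n)\bigr),
\]
I would fix a compact $K\subset\R^N$ and pick $\beta\in(\alpha,1)$, which is possible because $\alpha<q/(2-q)<1$. Proposition~\ref{prop: Hol 1} yields $|u_n(x_n+r_nx)-u_n(x_n)|\le C r_n^{\beta}|x|^{\beta}$ for $x\in K$ and $n$ large, while $|\eta(x_n)-\eta(x_n+r_nx)|\le \ell r_n|x|$. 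Hence for $x\in K$,
\[
|w_n(x)-v_n(x)|\le \frac{\ell r_n|x|\cdot Cr_n^{\beta}|x|^{\beta}}{L_n r_n^{1+\alpha}}=\frac{C\ell}{L_n}\,r_n^{\beta-\alpha}|x|^{1+\beta}.
\]

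Because $\beta>\alpha$ and $r_n\to 0^+$, while $L_n\to\infty$, the right-hand side tends to $0$ uniformly on $K$. Combined with the identity $\bar w_n-\bar v_n=w_n-v_n$ and the known convergence $\bar v_n\to \bar v$ locally uniformly, this proves $\bar w_n\to \bar v$ locally uniformly in $\R^N$. No serious obstacle is expected: the only subtle point is to verify the cancellation $\nabla w_n(0)=\nabla v_n(0)$, which is what makes the two blow-up profiles align in the limit.
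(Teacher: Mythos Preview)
Your proof is correct and follows essentially the same approach as the paper: both use the cancellation $\nabla w_n(0)=\nabla v_n(0)$ to reduce to estimating $w_n-v_n$, then combine the Lipschitz bound on $\eta$ with the H\"older bound from Proposition~\ref{prop: Hol 1}. The only difference is that the paper simply takes $\beta=\alpha$, obtaining $|w_n(x)-v_n(x)|\le \tfrac{C}{L_n}|x|^{1+\alpha}\to 0$, so the extra margin $\beta>\alpha$ is not needed since $L_n\to\infty$ already does the job.
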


\begin{proof}
This follows from the fact that $\|\bar w_n-\bar v_n\|_{L^\infty(D)} \to 0$ as $n \to \infty$, for every $D \subset \R^N$ compact. Indeed, we have that
\[
\nabla v_n(0) = \frac{\eta(x_n)}{L_n r_n^\alpha} \nabla u_n(x_n) = \nabla w_n(0);
\]
hence, using the boundedness of $\{u_n\}$ in $C^{0,\alpha}(\supp \, \eta)$ (Proposition \ref{prop: Hol 1}), we infer that
\begin{align*}
|\bar w_n(x) - \bar v_n(x)| & = |w_n(x)-v_n(x)| \\
& = \frac{1}{L_n r_n^{1+\alpha}} |\eta(x_n+r_n x)-\eta(x_n)| |u_n(x_n+r_n x)- u_n(x_n)| \\
& \le \frac{C}{L_n r_n^{1+\alpha}}\cdot r_n |x| \cdot r_n^\alpha |x|^\alpha. 
\end{align*}
This quantity tends to $0$, as $n \to \infty$, uniformly on compact sets.
\end{proof}

The advantage of working with $\bar w_n$ instead of $\bar v_n$ stays in the fact that $\bar w_n$ solves a sufficiently nice equation. In case (h1) holds, let 
\[
a_n:= \frac{\eta(x_n) u_n(x_n)}{L_n r_n^{1+\alpha}}, \qquad b_n:= \nabla w_n(0), 
\] 
\[
\bar \eps_n:=  \frac{\eta(x_n) \eps_n}{L_n r_n^{1+\alpha}}, \qquad k_n:=   r_n^{q-\alpha(2-q)}  \left(\frac{\eta(x_n)}{L_n}\right)^{2-q},
\] 
\[
\bar g_n(\bar w_n) := \lambda_+ \frac{\left( \bar w_n + a_n + \langle b_n,x\rangle\right)^+}{\left( \bar \eps_n^2 + \left( \bar w_n + a_n + \langle b_n,x\rangle\right)^2\right)^\frac{2-q}2}-\lambda_- \frac{\left( \bar w_n + a_n + \langle b_n,x\rangle\right)^-}{\left( \bar \eps_n^2 + \left( \bar w_n + a_n + \langle b_n,x\rangle\right)^2\right)^\frac{2-q}2},
\]
\[
\bar f_n(x)= \frac{r_n^{1-\alpha} \eta(x_n)}{L_n} f_n(x_n+r_n x) .
\]
It is immediate to check that 
\begin{subequations}\label{eq bar w}
\begin{equation}\label{eq bar w 1}
-\Delta \bar w_n + \bar f_n= k_n \bar g_n(\bar w_n) \qquad \text{in $\Omega_n$}.
\end{equation} 
Analogously, in case (h2) holds, letting $a_n, b_n, k_n$ as above and
\[
\bar g_n(\bar w_n) = \alpha_n \left(\lambda_+ \left(\left( \bar w_n + a_n +\langle b_n,x\rangle\right)^+\right)^{q-1} -\lambda_-\left(\left( \bar w_n + a_n + \langle b_n,x\rangle\right)^-\right)^{q-1}\right),
\]
it results that
\begin{equation}\label{eq bar w 2}
-\Delta \bar w_n = k_n \bar g_n(\bar w_n) \qquad \text{in }\Omega_n \setminus \{\bar w_n + a_n + \langle b_n,x\rangle=0\}.
\end{equation}
\end{subequations}
In both cases, in order to reach a contradiction we have to discuss several possibilities related to the asymptotic behavior of the quantities $\bar \eps_n, a_n, b_n$. This is the content of the next lemmas, for which it will be useful to observe that, as $0<\alpha<q/(2-q)$, we have $k_n \to 0$, and $\|\bar f_n\|_{L^\infty(\Omega_n)} \to 0$, as $n \to \infty$.

\begin{lemma}\label{lem: partial divergence}
Both $\{a_n\}$ and $\{b_n\}$ are bounded sequences.
\end{lemma}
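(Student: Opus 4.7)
The plan is to argue by contradiction, assuming that along a subsequence $|a_n|+|b_n|\to+\infty$, and to deduce that $\bar v$ would have to be harmonic on all of $\R^N$, yielding a contradiction via the Liouville theorem. The crucial identity is
\[
\bar w_n(x)+\phi_n(x) \;=\; \frac{\eta(x_n)\,u_n(x_n+r_n x)}{L_n\,r_n^{1+\alpha}},\qquad \phi_n(x):=a_n+\langle b_n,x\rangle,
\]
showing that the argument of the nonlinearity in \eqref{eq bar w 1}--\eqref{eq bar w 2} is a spatial rescaling of $u_n$ up to a scalar factor. I would distinguish two cases: \textbf{(A)} $\{b_n\}$ is bounded with $|a_n|\to+\infty$, so that $\phi_n\to\pm\infty$ uniformly on compact subsets of $\R^N$; \textbf{(B)} $|b_n|\to+\infty$, so that after extracting $b_n/|b_n|\to e\in S^{N-1}$, either $|a_n|/|b_n|\to+\infty$ (reducing to case (A) on every compact set), or up to a subsequence $a_n/|b_n|\to -c$ and $\phi_n$ diverges locally uniformly on $\R^N\setminus H$, with $H:=\{\langle e,x\rangle=c\}$.

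Next I would exploit the condition $q-1<0$. For any $\varphi\in C^\infty_c(\R^N\setminus H)$, on $\supp\,\varphi$ one has $|\phi_n|\geq C_\varphi>0$ for all large $n$; combined with the local boundedness of $\bar w_n$ (granted by $\bar w_n\to\bar v$ locally uniformly), this gives $|\bar w_n+\phi_n|\geq C_\varphi/2$, and hence in case (h1) the bound $|\bar g_n(\bar w_n)|\leq\max(\lambda_+,\lambda_-)\,|\bar w_n+\phi_n|^{q-1}$ is uniform on $\supp\,\varphi$, and similarly in case (h2) with the bounded extra factor $\alpha_n$. Since $k_n\to 0$ and $\|\bar f_n\|_\infty\to 0$, the forcing $-\bar f_n+k_n\bar g_n(\bar w_n)$ goes to zero uniformly on $\supp\,\varphi$. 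A weak $H^1_{\loc}$ bound for $\bar w_n$, obtained by testing the equation against $\bar w_n\psi^2$ exactly as in Lemma \ref{lem: weak conv Hol1}, combined with the local uniform convergence $\bar w_n\to\bar v$, yields $\nabla\bar w_n\rightharpoonup\nabla\bar v$ weakly in $L^2_{\loc}$; passing to the limit in the weak formulation then gives $-\Delta\bar v=0$ on $\R^N\setminus H$ (with $H$ empty in case (A) and in the $|a_n|/|b_n|\to+\infty$ subcase of (B)).

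When $H$ is empty, $\bar v$ is already harmonic on $\R^N$. Otherwise the $C^{1,\gamma}_{\loc}$ convergence $\bar v_n\to\bar v$ together with $\bar w_n-\bar v_n\to 0$ locally uniformly ensures $\bar v\in C^{1,\gamma}_{\loc}(\R^N)$, so that $\nabla\bar v$ is continuous across $H$; integrating by parts on the two half-spaces separated by $H$ then cancels all boundary contributions, yielding $\Delta\bar v=0$ on $\R^N$ distributionally, hence classically. Since the identities $\bar v_n(0)=|\nabla\bar v_n(0)|=0$ pass to the limit and \eqref{prop bar v} gives $[\partial_{x_i}\bar v]_{C^{0,\alpha}(\R^N)}\leq 2$ for every $i$, we obtain $|\nabla\bar v(x)|\leq 2\sqrt{N}\,|x|^\alpha$ with $\alpha<1$; thus each component of $\nabla\bar v$ is a harmonic function of sublinear growth on $\R^N$, and the Liouville theorem forces $\nabla\bar v$ to be constant, and then identically zero, contradicting $\partial_{x_1}\bar v(0)-\partial_{x_1}\bar v(z)=1$ from \eqref{prop bar v}. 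The main obstacle I foresee is the removable-singularity step in case (B): one must rule out any concentrated contribution to $\Delta\bar v$ supported on $H$, which is precisely what the $C^{1,\gamma}$-matching of $\bar v$ across $H$, inherited from the Ascoli--Arzel\`a step earlier in this section, accomplishes.
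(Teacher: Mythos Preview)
Your proposal is correct and follows essentially the same strategy as the paper: a contradiction argument showing that if $|a_n|+|b_n|\to\infty$ then $\bar v$ is harmonic on $\R^N$ (possibly after a removable-singularity step across a hyperplane), which contradicts \eqref{prop bar v} via Liouville. The case split is equivalent to the paper's (the paper splits directly on whether $|a_n|/|b_n|\to\infty$ or stays bounded), and the removable-singularity argument across $H$ using the $C^1$ regularity of $\bar v$ is exactly what the paper does.

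One simplification: you do not need the intermediate weak $H^1_{\loc}$ bound for $\bar w_n$. On any compact set where $|\bar w_n+\phi_n|\geq C_\varphi/2$, the equations \eqref{eq bar w 1}--\eqref{eq bar w 2} give $|\Delta \bar w_n|\to 0$ \emph{uniformly}; combined with the already established \emph{uniform} convergence $\bar w_n\to\bar v$, this yields $\int \bar v\,\Delta\varphi=\lim_n\int \bar w_n\,\Delta\varphi=\lim_n\int(\Delta\bar w_n)\varphi=0$ directly, without passing through gradients. This is how the paper proceeds.
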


\begin{proof} 
For the sake of contradiction, assume not; then either $|a_n| \to +\infty$ and $|a_n|/|b_n| \to +\infty$, or $|b_n| \to +\infty$ and $|a_n|/|b_n| \le C$.

If $|a_n| \to +\infty$ and $|a_n|/|b_n| \to +\infty$, then $|a_n + \langle b_n,x\rangle| \to +\infty$ uniformly on compact sets of $\R^N$, and the nodal set $\{a_n + \langle b_n,x\rangle=0\}$ escapes at infinity. For concreteness, assume that $a_n + \langle b_n,x\rangle\to +\infty$ locally uniformly. Let $D$ be a compact set of $\R^N$. Since $\bar w_n$ is locally uniformly convergent, we have that $\bar w_n + a_n + \langle b_n,x\rangle>0$ definitely on $D$, and hence by \eqref{eq bar w} we deduce that
\[
|\Delta \bar w_n| \le k_n \lambda_+  \left(\bar w_n + a_n + \langle b_n,x\rangle\right)^{q-1} + o(1) \to 0
\]
uniformly on $D$. Since $D$ has been arbitrarily chosen, the limit $\bar v$ of $\bar w_n$ is a harmonic function in $\R^N$, of class $C^{1,\alpha}$, with bounded $[\nabla \bar v]_{C^{0,\alpha}(\R^N)}$ seminorm (thus $\bar v$ has strictly subquadratic growth at infinity), and with non-constant gradient. This is in contradiction with the Liouville theorem for harmonic functions.

We suppose now that $|b_n| \to +\infty$ and $|a_n|/|b_n| \le C$, and analyze the convergence of the nodal set $a_n +\langle b_n,x\rangle = 0$. We have
\[
a_n + \langle b_n,x\rangle=0 \quad \iff \quad \frac{a_n}{|b_n|} + \langle\frac{b_n}{|b_n|},x\rangle = 0,
\]
and passing to the limit the nodal set is described by $\mu + \langle\nu, x\rangle=0$ for some $\mu \in \R$, and $\nu \in S_1$. This means that $a_n+ \langle b_n,x\rangle \to +\infty$ on one of the half spaces $\Sigma_1:= \{\mu + \langle \nu,x\rangle >0\}$ or $\Sigma_2:=\{\mu + \langle \nu,x\rangle <0\}$, and $a_n+ \langle b_n,x\rangle \to -\infty$ on the other. 

Let $x \in \Sigma_1 \cup \Sigma_2$. Then $|\bar w_n(x) + a_n+ \langle b_n,x\rangle| \to +\infty$, and hence, passing to the limit in \eqref{eq bar w} (as we did in Lemma \ref{lem: ar non 0}), we deduce that $\bar v$ is harmonic in $\{\mu + \langle \nu,x\rangle \neq 0\}$. That is, $\bar v$ is a harmonic function in $\R^N$ minus a hyperplane, and moreover is of class $C^1$ in $\R^N$. It follows that $\bar v$ is harmonic in $\R^N$, and a contradiction can be reached as above.
\end{proof}

Lemma \ref{lem: partial divergence} shows that, up to a subsequence, $a_n \to a \in \R$ and $b_n \to b \in \R^N$. 

\begin{lemma}
In case (h1) holds, it results that $\bar \eps_n \to 0$ as $n \to \infty$.
\end{lemma}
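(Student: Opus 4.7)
The plan is to argue by contradiction, essentially mirroring the analogous lemma preceding Lemma~\ref{lem: ar non 0}, but taking advantage of the boundedness of $\{a_n\}$ and $\{b_n\}$ established in Lemma~\ref{lem: partial divergence}. Suppose that, up to a subsequence, $\bar\eps_n \ge \bar\eps > 0$ for all $n$.

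The key observation is that, since the map
\[
s \mapsto \frac{s^\pm}{(\bar\eps^2 + s^2)^{(2-q)/2}}
\]
is bounded in absolute value by $|s|/\bar\eps^{2-q}$, and since $a_n \to a$, $b_n \to b$ while $\{\bar w_n\}$ is locally uniformly convergent (hence locally bounded), on any compact set $K \subset \R^N$ the nonlinearity admits the estimate
\[
|\bar g_n(\bar w_n)| \le \frac{\lambda_+ + \lambda_-}{\bar\eps^{2-q}}\bigl(|\bar w_n| + |a_n| + |b_n||x|\bigr) \le C_K
\]
for all $n$ large enough. Plugging this into \eqref{eq bar w 1} and recalling that $k_n \to 0$ and $\|\bar f_n\|_{L^\infty(\Omega_n)} \to 0$, we obtain
\[
\|\Delta \bar w_n\|_{L^\infty(K)} \le \|\bar f_n\|_{L^\infty(\Omega_n)} + k_n C_K \longrightarrow 0.
\]

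Passing to the limit (in the distributional sense, which is legitimate thanks to the local uniform convergence of $\bar w_n$ to $\bar v$), we conclude that $\Delta \bar v = 0$ in $\R^N$, namely $\bar v$ is harmonic. On the other hand, by \eqref{prop bar v}, $\bar v \in C^{1,\alpha}(\R^N)$ with $[\partial_{x_i} \bar v]_{C^{0,\alpha}(\R^N)} \le 2$; this forces $|\bar v(x)| = O(|x|^{1+\alpha})$ as $|x| \to \infty$, i.e.\ $\bar v$ has strictly subquadratic growth. The classical Liouville theorem for harmonic functions then implies that $\bar v$ is an affine function, so that $\nabla \bar v$ is constant throughout $\R^N$. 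This contradicts the second part of \eqref{prop bar v}, which guarantees the existence of $z \in S_1$ with $\partial_{x_1}\bar v(0) - \partial_{x_1}\bar v(z) = 1$, and completes the proof.

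I do not anticipate any genuine obstacle here: all the ingredients (boundedness of $a_n, b_n$, local boundedness of $\bar w_n$, $k_n \to 0$, $\bar f_n \to 0$, and the properties of $\bar v$ from \eqref{prop bar v}) are already in place, and the structural features of the approximating nonlinearity in (h1)—in particular the Lipschitz bound by $|s|/\bar\eps^{2-q}$ when the regularization parameter is bounded away from zero—are exactly what make the argument work. Note that, in contrast to the preceding lemma, it is not enough to conclude that $\bar v$ is constant; one must use the non-triviality of $\nabla \bar v$, but this is already encoded in \eqref{prop bar v}.
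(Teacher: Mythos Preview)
Your proof is correct and follows essentially the same line as the paper's: contradiction via $\bar\eps_n\ge\bar\eps>0$, the bound $|\bar g_n(\bar w_n)|\le C|\bar w_n+a_n+\langle b_n,x\rangle|/\bar\eps^{2-q}$ combined with $k_n\to 0$ and $\bar f_n\to 0$ to force $\Delta\bar v=0$, and then Liouville against the non-constancy of $\nabla\bar v$ encoded in \eqref{prop bar v}. The paper simply compresses the contradiction step into ``exactly as in the previous cases'', referring back to the argument in Lemma~\ref{lem: partial divergence}, which you have spelled out in full.
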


\begin{proof}
If up to a subsequence $\bar \eps_n \to \bar \eps >0$, then by \eqref{eq bar w 1} and the boundedness of $\bar w_n$, $a_n$, and $b_n$, we have
\[
|\Delta w_n| \le C\frac{k_n}{\bar \eps_n^{2-q}} |\bar w_n + a_n + \langle b_n,x\rangle| + o(1) \to 0
\]
as $n \to \infty$, uniformly on compact sets. Therefore, $\bar v$ is harmonic in $\R^N$, and exactly as in the previous cases we reach a contradiction.
\end{proof}

We showed that 
\[
w_n = \bar w_n  + \langle b_n,x\rangle \to \bar v + \langle b,x\rangle =: w
\]
(recall the definition of $\bar w_n$ and of $b_n$) locally uniformly on compact sets of $\R^N$, and $\bar \eps_n \to 0$. Moreover,
\[
-\Delta w_n = -\Delta \bar w_n = o(1) + k_n \bar g_n(\bar w_n) \qquad \text{in }\Omega_n \setminus \{w_n +a_n = 0\}.
\]
Since $w$ is continuous and the convergence takes place locally uniformly, and recalling that $k_n \to 0$, passing to the limit as $n \to \infty$ as in Lemma \ref{lem: ar non 0} we obtain that $w$ is a harmonic function in the open set $\R^N \setminus \{w + a=0\}$. 

\begin{lemma}
Up to a subsequence, we have that $w_n \to w$ strongly in $H^1_{\loc}(\R^N)$.
\end{lemma}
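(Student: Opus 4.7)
The plan is to mimic Lemma \ref{lem: strong Hol1}, after translating the sequence so that the limit becomes harmonic off its zero set. Set $\widetilde w_n := w_n + a_n$ and $\widetilde w := w + a$; since $\{a_n\}$ is a sequence of constants converging to $a$, strong $H^1_{\loc}$ convergence of $w_n$ to $w$ is equivalent to that of $\widetilde w_n$ to $\widetilde w$. The virtue of working with $\widetilde w_n$ is that the affine function $a_n + \langle b_n, x\rangle$ is harmonic, so the equations \eqref{eq bar w} yield
\[
-\Delta \widetilde w_n = -\bar f_n + k_n \bar g_n(\bar w_n) \ \text{in } \Omega_n \quad \text{(case (h1))}, \qquad -\Delta \widetilde w_n = k_n \bar g_n(\bar w_n) \ \text{on } \{\widetilde w_n \neq 0\} \quad \text{(case (h2))},
\]
while $\widetilde w$ is continuous and harmonic on the open set $\{\widetilde w \neq 0\}$ (as already established in the paragraph preceding the lemma).

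First I would establish weak $H^1_{\loc}$ convergence $\widetilde w_n \weak \widetilde w$ (up to a subsequence) by adapting Lemma \ref{lem: weak conv Hol1}: testing the equation with $\widetilde w_n \varphi^2$ in case (h1), and with $|\widetilde w_n| \varphi^2 \psi_\delta(|\widetilde w_n|)$ followed by $\delta \to 0^+$ in case (h2), using that the singular integrand $k_n \widetilde w_n \bar g_n(\bar w_n)$ is dominated on compacts by $C k_n |\widetilde w_n|^q \to 0$ thanks to $k_n \to 0$ and the local $L^\infty$ bound on $\widetilde w_n$. To upgrade to strong convergence, I would apply Lemma \ref{lem: der H 2} to $\widetilde w$, yielding
\[
\frac{H(\widetilde w, x_0, r_2)}{r_2^{N-1}} - \frac{H(\widetilde w, x_0, r_1)}{r_1^{N-1}} = \int_{r_1}^{r_2} \frac{2}{r^{N-1}} \int_{B_r(x_0)} |\nabla \widetilde w|^2 \, dr,
\]
and the analogous identity for $\widetilde w_n$, which in case (h1) is obtained by a direct differentiation using the equation and in case (h2) by invoking the forthcoming Proposition \ref{lem: def H 1}, giving
\[
\frac{H(\widetilde w_n, x_0, r_2)}{r_2^{N-1}} - \frac{H(\widetilde w_n, x_0, r_1)}{r_1^{N-1}} = \int_{r_1}^{r_2} \frac{2}{r^{N-1}} \int_{B_r(x_0)} \bigl[|\nabla \widetilde w_n|^2 - k_n \widetilde w_n \bar g_n(\bar w_n) + \bar f_n \widetilde w_n\bigr] dr.
\]
By local uniform convergence on $S_r$, and using $k_n \to 0$, $\|\bar f_n\|_{L^\infty} \to 0$, and the bound $|\widetilde w_n \bar g_n(\bar w_n)| \le C|\widetilde w_n|^q$, both the boundary quantities and the two subleading volume integrands pass to the limit; comparison of the two expressions gives
\[
\int_{r_1}^{r_2} \frac{2}{r^{N-1}} \int_{B_r(x_0)} |\nabla \widetilde w_n|^2 \, dr \longrightarrow \int_{r_1}^{r_2} \frac{2}{r^{N-1}} \int_{B_r(x_0)} |\nabla \widetilde w|^2 \, dr.
\]
Combining this with the Fatou chain of inequalities already exploited in Lemma \ref{lem: strong Hol1}, I would conclude that $\|\nabla \widetilde w_n\|_{L^2(B_r(x_0))} \to \|\nabla \widetilde w\|_{L^2(B_r(x_0))}$ for a.e. $r$, which together with the weak $H^1$ convergence yields strong convergence on arbitrary balls, hence locally.

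The main obstacle is the $H$-derivative formula for $\widetilde w_n$ in case (h2), where the equation only holds off the zero set $\{\widetilde w_n = 0\}$. This is precisely what the forthcoming Proposition \ref{lem: def H 1} is designed to handle, via an approximation argument of the same flavour as the proof of Lemma \ref{lem: der H 2} (truncating $\widetilde w_n$ by $(\widetilde w_n - \eps)^{\pm}$ to avoid the singular locus and then sending $\eps \to 0^+$ by monotone convergence); granted that identity, all the other ingredients are entirely parallel to Lemma \ref{lem: strong Hol1}.
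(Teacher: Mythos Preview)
Your proposal is correct and follows essentially the same approach as the paper, which simply says to proceed as in Lemmas~\ref{lem: weak conv Hol1} and~\ref{lem: strong Hol1}. The translation $\widetilde w_n := w_n + a_n$ is precisely the detail needed to make those earlier arguments apply verbatim, since here the limit $w$ is harmonic off $\{w+a=0\}$ rather than off its own zero set; after the shift, $\widetilde w_n$ solves an equation of the exact form~\eqref{eq mu} (with $\mu_\pm = \alpha_n k_n \lambda_\pm$ in case~(h2)), so Lemma~\ref{lem: der H 2} and Proposition~\ref{lem: def H 1} apply directly.
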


\begin{proof}
It is possible to proceed exactly as in Lemma \ref{lem: weak conv Hol1}, obtaining weak $H^1_{\loc}$ convergence. At this point, for the strong convergence, we can proceed as in Lemmas \ref{lem: strong Hol1}.
\end{proof}

\begin{proof}[Conclusion of the proof of Theorem \ref{prop: bounds}]
The conclusion is very similar to the one of Proposition \ref{prop: Hol 1}. We write down the domain variation formula for $w_n$, take $Y = (x-x_0) \eta_\delta$, and pass to the limit in $n$. By strong $H^1_{\loc}$ convergence, we deduce that both under (h1) and (h2)
\[
\begin{split}
\int_{S_r(x_0)} |\nabla w|^2 & = \frac{N-2}{r} \int_{B_r(x_0)} |\nabla w|^2 + 2\int_{S_r(x_0)} (\pa_\nu w)^2,
\end{split}
\]
for every $x_0 \in \R^N$ and $r>0$. Since $w$ is harmonic in $\R^N \setminus \{w + a=0\}$, Theorem 1.1 in \cite{TavTer} implies that $w$ is harmonic in the whole space $\R^N$. But recalling \eqref{prop bar v} and the definition of $w$, we have that $w$ is a harmonic function in $\R^N$, of class $C^{1,\alpha}$, with bounded $[\nabla w]_{C^{0,\alpha}(\R^N)}$ seminorm (thus $w$ has strictly subquadratic growth at infinity), and with non-constant gradient. This is once again in contradiction with the Liouville theorem for harmonic functions.
\end{proof}

As a straightforward corollary:

\begin{proof}[Conclusion of the proof of Theorem \ref{thm: reg good}]
It remains only to show that good solutions satisfy the domain variation formula \eqref{var dom}. This can be obtained simply writing the analogue identity for the approximating functions $u_n$, as in the conclusion of the proof of Proposition \ref{prop: Hol 1}, and taking the limit as $n \to \infty$.
\end{proof}

\section{Study of the nodal set}\label{sec: changes}

As already announced, we refer to \cite{SoTe2018} for most of the results, emphasizing here only the necessary modifications.

Before proceeding, we mention that we shall prove slightly stronger results than Theorems \ref{thm: very strong V}, \ref{thm: blow-up} and \ref{thm: Hausdorff}. Indeed, we shall prove the validity of such theorems for $C^{1,{\frac{q}{2-q}}-}$ stationary solutions to \eqref{eq}, rather than for good solutions. Of course, the above statements follow since good solutions are regular stationary solutions, by Theorem \ref{thm: reg good}. 

\subsection{Preliminaries}

The following statement corresponds to \cite[Proposition 2.1]{SoTe2018}, and follows as direct consequence of the regularity.
\begin{proposition}\label{thm: blow-up pre}
Let $u$ be a stationary solution of \eqref{eq}, of class $C^{1,{\frac{q}{2-q}}-}$, $x_0 \in Z(u)$, and $R  \in (0, \dist(x_0,\pa B_1))$. Then the following alternative holds:
\begin{itemize}
\item[($i$)] either $\nabla u(x_0)\neq 0$, and therefore there exists a function $\Gamma_{x_0}$ such that  
\[
u(x) = \langle\nabla u(x_0),x-x_0\rangle + \Gamma_{x_0}(x) \qquad \text{in $B_R(x_0)$},
\]
with 
\[
\begin{cases} \Gamma_{x_0}(x)| \le C |x-x_0|^{1+\alpha}  \\ 
|\nabla \Gamma_{x_0}(x)| \le C |x-x_0|^{\alpha} \end{cases}
 \qquad \text{in $B_R(x_0)$}
\]
for a suitable constant $C$ depending on $\alpha \in (0,q/(2-q))$;
\item[($ii$)] or, for every $\eps>0$, there exists $C_\eps>0$ such that
\[
\begin{cases}
|u(x)| \le C_\eps |x-x_0|^{\frac2{2-q}-\eps} \\    |\nabla u(x)| \le C_\eps |x-x_0|^{\frac{q}{2-q}-\eps}\end{cases} \qquad \text{in $B_R(x_0)$}.
\]
\end{itemize}
\end{proposition}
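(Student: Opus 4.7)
The plan is to observe that Proposition \ref{thm: blow-up pre} is essentially an immediate unpacking of the $C^{1,\alpha}_{\loc}$ regularity guaranteed by Theorem \ref{thm: reg good} (or, in the slightly stronger form stated here, by the assumption that $u$ is of class $C^{1,{q/(2-q)}-}$). The dichotomy is just the trivial dichotomy $\nabla u(x_0)\neq 0$ or $\nabla u(x_0)=0$; no blow-up analysis, monotonicity formula, or compactness is needed at this stage. The main obstacle is simply to be precise about which $\alpha$ one picks in terms of the given $\eps$, and to handle the fact that the remainder must be controlled in the $L^\infty$ norm as well as in its gradient.

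For case $(i)$, fix any $\alpha\in(0,q/(2-q))$ and set $\Gamma_{x_0}(x):=u(x)-\langle\nabla u(x_0),x-x_0\rangle$. Since $u(x_0)=0$, $\Gamma_{x_0}$ is precisely the first order Taylor remainder of $u$ at $x_0$. By the $C^{1,\alpha}$ regularity of $u$,
\[
|\nabla\Gamma_{x_0}(x)|=|\nabla u(x)-\nabla u(x_0)|\le [\nabla u]_{C^{0,\alpha}(B_R(x_0))}\,|x-x_0|^\alpha,
\]
and by integrating along the segment joining $x_0$ and $x$,
\[
|\Gamma_{x_0}(x)|=\Bigl|\int_0^1\bigl\langle\nabla u(x_0+t(x-x_0))-\nabla u(x_0),x-x_0\bigr\rangle\,dt\Bigr|\le C|x-x_0|^{1+\alpha},
\]
with $C$ depending on $\alpha$ through $[\nabla u]_{C^{0,\alpha}(B_R(x_0))}$.

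For case $(ii)$, when $\nabla u(x_0)=0$, fix $\eps>0$ arbitrary. Since $u\in C^{1,\alpha}_{\loc}$ for every $\alpha<q/(2-q)$, we may pick (for $\eps$ small enough) $\alpha_\eps:=\tfrac{q}{2-q}-\eps\in(0,q/(2-q))$. Then $\nabla u(x_0)=0$ and the Hölder estimate give
\[
|\nabla u(x)|=|\nabla u(x)-\nabla u(x_0)|\le [\nabla u]_{C^{0,\alpha_\eps}(B_R(x_0))}\,|x-x_0|^{\frac{q}{2-q}-\eps},
\]
and integrating along segments as above,
\[
|u(x)|=|u(x)-u(x_0)|\le C_\eps|x-x_0|^{1+\alpha_\eps}=C_\eps|x-x_0|^{\frac{2}{2-q}-\eps},
\]
using $1+q/(2-q)=2/(2-q)$. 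This yields the two desired bounds with $C_\eps$ depending on the Hölder seminorm of $\nabla u$ at exponent $\alpha_\eps$, which is finite by hypothesis; for $\eps\ge q/(2-q)$ the conclusion is trivial from the bound for smaller $\eps$.

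In short, the only real content is the straightforward arithmetic identity $1+\tfrac{q}{2-q}=\tfrac{2}{2-q}$; all the analytical work has already been carried out in Theorem \ref{thm: reg good}. There is no serious obstacle to overcome.
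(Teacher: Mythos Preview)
Your proof is correct and matches the paper's approach exactly: the paper simply states that the proposition ``follows as direct consequence of the regularity'' (referring to the $C^{1,\frac{q}{2-q}-}$ hypothesis), and you have merely written out the elementary Taylor-remainder computation that this entails. There is nothing to add.
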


\subsection{Almgren and Weiss type functionals}\label{sub: weiss}

Let $\Omega \subset \R^N$ be a domain, $\mu_+, \mu_-> 0$, and let $v$ be a stationary solution of class $C^{1,\alpha}$ (for some $0< \alpha <q/(2-q)$) to 
\begin{equation}\label{eq mu}
-\Delta v = \mu_+ (v^+)^{q-1} -\mu_- (v^-)^{q-1} \quad \text{in $\Omega$}, 
\end{equation}
with $q \in (0,1)$. Let 
\[
F_{\mu_+, \mu_-}(v):= \mu_+ (v^+)^{q} +\mu_- (v^-)^{q}.
\]
For $x_0 \in \Omega$, $0<r<\dist(x_0,\pa \Omega)$, and for $\gamma,t>0$, we consider the functionals:
\begin{align*}
H(v,x_0, r) &:= \int_{S_r(x_0)} v^2,  \\
D_t(v,x_0,r) &:= \int_{B_r(x_0)}\left(|\nabla v|^2 - \frac{t}q F_{\mu_+, \mu_-}(v)\right), \\
N_t(v,x_0,r) &:= \frac{r D_t(v,x_0,r)}{H(v,x_0,r)}, \quad \text{defined provided that $H(v,x_0,r) \neq 0$},\\
W_{\gamma,t}(v,x_0,r)&:= \frac{1}{r^{N-2+2\gamma}} D_t(v,x_0,r) - \frac{\gamma}{r^{N-1+2\gamma}}H(v,x_0,r).
\end{align*}
In what follows, we fix $v$, solution to \eqref{eq mu}, and we derive several relations involving the above quantities and their derivatives. 

By definition and using the divergence theorem\footnote{Since $q \in (0,1)$ the classical divergence theorem is not applicable, but by $C^{1,\alpha}$ regularity we can appeal to more general versions such as \cite[Proposition 2.7]{HoMiTa}.}, we have
\[
D_t(v,x_0,r) = \int_{S_r(x_0)} v \, \pa_\nu v - \frac{t-q}{q} \int_{B_r(x_0)} F_{\mu_+, \mu_-}(v),
\]
\[
W_{\gamma,t}(v,x_0,r) = \frac{H(v,x_0,r)}{r^{N-1+2\gamma}} \Big( N_t(v,x_0,r)-\gamma \Big)
\]
whenever the right hand side makes sense.

\begin{proposition}\label{lem: def H 1}
Denoting with $\prime$ the derivative with respect to $r$, we have
\begin{equation}\label{der H}
\begin{split}
H'(v,x_0,r) &=  \frac{N-1}{r}H(v,x_0,r) + 2 \int_{S_r(x_0)} v \, \pa_\nu v \\
& =  \frac{N-1}{r}H(v,x_0,r) + 2 D_q(v,x_0,r),
\end{split}
\end{equation}
and 
\begin{equation}\label{der D}
\begin{split}
D_t'(v,x_0,r) & = \frac{N-2}{r} D_t(v,x_0,r) - \frac{2N-(N-2)t }{q r} \int_{B_r(x_0)} F_{\mu_+, \mu_-}(v) \\
& + \int_{S_r(x_0)}\left(2 v_\nu^2 + \left(\frac{2-t}{q}\right) F_{\mu_+, \mu_-}(v)\right).
\end{split}
\end{equation}
\end{proposition}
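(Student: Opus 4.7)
The proof of Proposition \ref{lem: def H 1} is a computation in two steps, each for one of the identities. I would proceed as follows.

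\textbf{Step 1: the formula for $H'$.} I parametrize the sphere and write
\[
H(v,x_0,r) = r^{N-1}\int_{S_1} v(x_0+r\omega)^2\,d\sigma(\omega).
\]
Since $v \in C^{1,\alpha}$, the integrand is $C^1$ in $r$ and I can differentiate under the integral, obtaining
\[
H'(v,x_0,r) = \frac{N-1}{r}H(v,x_0,r) + 2\int_{S_r(x_0)} v\,\partial_\nu v\,d\sigma,
\]
which is the first equality. For the second equality, I apply the divergence theorem to the vector field $v\nabla v \in C^{0,\alpha}(B_r(x_0))$ on $B_r(x_0)$ (justified by \cite[Prop.~2.7]{HoMiTa}, as the paper indicates), giving
\[
\int_{S_r(x_0)} v\,\partial_\nu v = \int_{B_r(x_0)}\bigl(|\nabla v|^2 + v\,\Delta v\bigr).
\]
Since $v$ solves \eqref{eq mu} pointwise on the open set $\{v\neq 0\}$, a direct cancellation gives $v\,\Delta v = -F_{\mu_+,\mu_-}(v)$ there, and this extends trivially to the nodal set (which contributes $0$ to the integral, as $|v\cdot(v^\pm)^{q-1}|\le C|v|^q$ is bounded). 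Thus $\int_{S_r(x_0)} v\,\partial_\nu v = D_q(v,x_0,r)$, which yields the claimed identity.

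\textbf{Step 2: the Pohozaev identity needed for $D_t'$.} By direct differentiation in $r$ (the integrand $|\nabla v|^2 - (t/q)F_{\mu_+,\mu_-}(v)$ is continuous),
\[
D_t'(v,x_0,r) = \int_{S_r(x_0)}\!\Bigl(|\nabla v|^2 - \tfrac{t}{q}F_{\mu_+,\mu_-}(v)\Bigr).
\]
To convert this boundary integral into the form of the statement, I derive the Pohozaev identity by plugging the test field $Y(x) = (x-x_0)\eta_\delta(x)$ into the domain variation formula \eqref{var dom} (valid for $v$ with $\mu_\pm$ replacing $\lambda_\pm$, by the stationarity assumption), where $\eta_\delta(x) = \varphi_\delta(|x-x_0|)$ is the standard radial cutoff of $B_r(x_0)$ used e.g.\ in the conclusion of the proof of Proposition~\ref{prop: Hol 1}. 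A direct computation gives $\mathrm{div}\,Y = N\eta_\delta + \langle x-x_0,\nabla\eta_\delta\rangle$ and $\langle dY[\nabla v],\nabla v\rangle = \eta_\delta|\nabla v|^2 + \langle x-x_0,\nabla v\rangle\langle\nabla\eta_\delta,\nabla v\rangle$. Letting $\delta\to 0^+$, the $\nabla\eta_\delta$ terms concentrate on $S_r(x_0)$ and produce, using $\langle x-x_0,\nabla v\rangle = r\partial_\nu v$ there,
\[
r\!\int_{S_r(x_0)}|\nabla v|^2 - 2r\!\int_{S_r(x_0)}v_\nu^2 = (N-2)\!\int_{B_r(x_0)}|\nabla v|^2 - \frac{2N}{q}\!\int_{B_r(x_0)}F_{\mu_+,\mu_-}(v) + \frac{2r}{q}\!\int_{S_r(x_0)}F_{\mu_+,\mu_-}(v).
\]

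\textbf{Step 3: assembly.} Substituting the expression for $r\int_{S_r(x_0)}|\nabla v|^2$ from Step~2 into the formula for $D_t'(v,x_0,r)$ and reorganising the $\int_{B_r(x_0)}F$ coefficient as $-(N-2)t/(qr) - (2N-(N-2)t)/(qr) = -2N/(qr)$, the stated identity \eqref{der D} falls out. The main technical subtlety throughout is the use of the divergence theorem and of the domain variation formula \eqref{var dom} in the singular setting: these are exactly the tools whose existence and applicability were secured by the $C^{1,q/(2-q)-}$ regularity and by the stationarity assumption, with the classical extension to Hölder vector fields of bounded divergence provided by \cite[Prop.~2.7]{HoMiTa}.
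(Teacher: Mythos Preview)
Your proof is correct and follows essentially the same approach as the paper: the first identity is obtained from the $C^{1,\alpha}$ regularity together with the generalized divergence theorem of \cite{HoMiTa} applied to $v\nabla v$, and the second by deriving the local Pohozaev identity from the domain variation formula with the radial cutoff field $Y=(x-x_0)\eta_\delta$ and letting $\delta\to 0^+$, then reorganising algebraically. The paper's own argument is terser but invokes exactly these ingredients in the same order.
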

\begin{proof}
With respect to the case $q \in [1,2)$, dealing with $q \in (0,1)$ requires some extra care. The validity of the first equality in \eqref{der H} is obvious, since $v \in C^{1,\alpha}(\overline{B_r(x_0)})$ by Theorem \ref{thm: reg good}. The second equality follows from the divergence theorem in \cite[Proposition 2.7]{HoMiTa}, which is applicable since $v \pa_\nu v \in C(\overline{B_r(x_0)})$ and $\div(v \nabla v) = |\nabla v|^2 + F_{\mu_+,\mu_-}(v) \in L^1(B_r(x_0))$. 

Let us focus now on \eqref{der D}. We proceed exactly as in the conclusion of the proof of Proposition \ref{prop: Hol 1}. Taking $Y=(x-x_0) \eta_{\delta}$ in the variation of domains formula \eqref{var dom}, and passing to the limit as $\delta \to 0^+$, we obtain
\begin{equation}\label{loc poh}
\begin{split}
\int_{S_r(x_0)} |\nabla v|^2 & = \frac{N-2}{r} \int_{B_r(x_0)} |\nabla v|^2 - \frac{ 2N}{q r} \int_{B_r(x_0)} F_{\mu_+, \mu_-}(v) \\
& \qquad + \int_{S_r(x_0)}\left(2 (\pa_\nu v)^2 + \frac{2}{q} F_{\mu_+, \mu_-}(v)\right).
\end{split}
\end{equation}
At this point \eqref{der D} follows easily.
\end{proof}

With \eqref{der H} and \eqref{der D} in our hands the expressions of the derivative $W_{\gamma,t}$ can be established exactly as in \cite[Proposition 2.3 and Corollary 2.4]{SoTe2018}.

%
%

\subsection{A unique continuation property} We conclude this section proving the validity of the classical unique continuation property for good solutions to \eqref{eq}. We follow the same strategy introduced in \cite{SoWe}, which concern unique continuation for sublinear equations.

\begin{theorem}\label{thm: unique cont class}
Let $q \in (0,1)$, $\lambda_+,\lambda_->0$, and let $u$ be a stationary solution to \eqref{eq} in $B_1$, of class $C^{1,{\frac{q}{2-q}}-}(B_1)$. If $u \equiv 0$ in $B_r(x_0)$ for some $x_0 \in B_1$ and $r >0$, then $u \equiv 0$ in $B_1$. 
\end{theorem}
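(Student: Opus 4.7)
The plan is a contradiction argument via blow-up, following the strategy of \cite{SoWe}. Suppose, for contradiction, that $u \not\equiv 0$ on $B_1$. Consider the open set
\[
A := \{x \in B_1 : u \equiv 0 \text{ in a neighborhood of } x\},
\]
which is nonempty by hypothesis since $B_r(x_0) \subset A$. I aim to show $A = B_1$, which contradicts $u \not\equiv 0$. If $A \neq B_1$, by connectedness of $B_1$ one has $\partial A \cap B_1 \neq \emptyset$; I then pick $y_0 \in \partial A \cap B_1$. Since $u \equiv 0$ (hence also $\nabla u \equiv 0$) on $A$, continuity of $u$ and $\nabla u$ given by Theorem \ref{thm: reg good} yields $u(y_0) = |\nabla u(y_0)| = 0$, so $y_0 \in \Sigma(u)$. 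To guarantee that the rescaling below sees a macroscopic half-space of vanishing, I would in fact pick $y_0$ more carefully, namely on the boundary of a maximal ball $B_R(\tilde x_0) \subset A$ (with $y_0 \in \partial B_R(\tilde x_0) \cap \partial A$), so that a full tangent half-space at $y_0$ is filled by $A$ after rescaling.

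Next I would blow up at $y_0$: for a sequence $\rho_n \to 0^+$, set
\[
u_n(x) := \frac{u(y_0+\rho_n x)}{\|u\|_{y_0,\rho_n}}.
\]
The Weiss-type monotonicity of Section \ref{sub: weiss}, together with the compactness Theorem \ref{prop: bounds} applied under assumption (h2), produce -- up to a subsequence -- convergence of $u_n$ in $C^{1,\alpha}_{\mathrm{loc}}(\mathbb R^N)$ to a nontrivial $2/(2-q)$-homogeneous stationary solution $\bar u$ of a scaled version of \eqref{eq}. The rescaled image of $B_R(\tilde x_0)$ under $x\mapsto(x-y_0)/\rho_n$ is the ball $B_{R/\rho_n}((\tilde x_0-y_0)/\rho_n)$, which as $\rho_n\to 0^+$ fills the open half-space $H := \{\langle x,\nu\rangle<0\}$, with $\nu := (y_0-\tilde x_0)/R$. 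Passing the identity $u_n\equiv 0$ on those balls to the limit gives $\bar u \equiv 0$ on $H$, and by $C^{1}$-regularity also $\nabla\bar u \equiv 0$ on $\partial H$.

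The proof is then completed by showing that no nontrivial $2/(2-q)$-homogeneous stationary solution of the blow-up equation can vanish identically on a half-space. Writing $\bar u(r\theta)=r^{2/(2-q)}\psi(\theta)$ in spherical coordinates, $\psi \in C^{1,\alpha}(S^{N-1})$ is supported in a closed hemisphere, satisfies both $\psi=0$ and $\partial_\nu\psi=0$ along the bounding equator, and solves a semilinear elliptic equation on each of the open sets $\{\psi>0\}$ and $\{\psi<0\}$ on $S^{N-1}$. The overdetermined Cauchy data on the equator, combined with standard unique continuation on smooth manifolds applied component by component, forces $\psi\equiv 0$, contradicting the nontriviality of $\bar u$.

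The main obstacle is precisely the last rigidity step: as in \cite{SoWe}, it requires exploiting the overdetermined boundary data $\psi = \partial_\nu \psi = 0$ on the equator in a framework compatible with the singular nonlinearity $(\bar u^\pm)^{q-1}$ near $\{\bar u=0\}$. The delicate point is that the classical unique continuation machinery (Carleman estimates, Aronszajn-type results) applies directly only on each sign-component, so one has to glue the information along the free boundary $\{\psi=0\}$ using the $C^{1,\alpha}$ regularity and the variation-of-domain formula \eqref{var dom} inherited by $\bar u$; this is exactly the content of the adaptation of the \cite{SoWe} strategy to the singular range $q \in (0,1)$.
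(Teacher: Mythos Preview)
Your approach diverges substantially from the paper's, which is far more direct: the paper observes that, once the derivative identities \eqref{der H} and \eqref{der D} for $H$ and $D_q$ are in hand (Proposition~\ref{lem: def H 1}), the Almgren-type monotonicity argument of \cite[Theorem~1.1]{SoWe} goes through word for word. That argument is a frequency/doubling argument, not a blow-up: one shows that if $H(u,y_0,\bar r)>0$ then the frequency $N_q(u,y_0,r)$ stays bounded on $(0,\bar r)$, forcing $H(u,y_0,r)>0$ for all $r\in(0,\bar r)$ and precluding vanishing on any smaller ball. No blow-up limits or classification of homogeneous profiles enter.

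Your blow-up route has a genuine gap in the final rigidity step. You propose to rule out a nontrivial $2/(2-q)$-homogeneous stationary solution vanishing on a half-space by writing $\bar u = r^{2/(2-q)}\psi(\theta)$ and invoking ``standard unique continuation on smooth manifolds applied component by component'' from the Cauchy data $\psi=\partial_\nu\psi=0$ on the equator. But the spherical equation for $\psi$ carries the singular term $(\psi^\pm)^{q-1}$, which blows up near $\{\psi=0\}$; the Lipschitz-type hypotheses behind Aronszajn/Carleman unique continuation fail precisely there, so that machinery does not propagate across the free boundary $\{\psi=0\}$. Your last sentence defers this difficulty to ``the adaptation of the \cite{SoWe} strategy'', yet the \cite{SoWe} strategy \emph{is} the direct frequency argument that proves the theorem outright without any blow-up --- so the proposal becomes circular. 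A genuine rescue of the rigidity step would require something like Federer dimension reduction to $N=1$ followed by the Hamiltonian argument from the proof of Theorem~\ref{thm: Hausdorff}; but that in turn leans on the non-degeneracy and blow-up convergence of Theorems~\ref{thm: very strong} and~\ref{thm: blow-up '}, which in the paper's logical order are established \emph{after} Theorem~\ref{thm: unique cont class}, so you would also have to check carefully that no circular dependence is introduced.
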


\begin{proof}
The derivatives of $H(u,x_0,\cdot)$ and $D_q(v,x_0,\cdot)$ are given by \eqref{der H} and \eqref{der D}, respectively. As a consequence, we can repeat the proof of \cite[Theorem 1.1]{SoWe} essentially word by word, deducing the thesis.
\end{proof}

\subsection{Universal bound on the vanishing order and non-degeneracy}\label{sec: strong and non-deg}

At a first stage, it is convenient to work with a different notion of vanishing order, which involves the norm $\|\cdot\|_{x_0,r}$ defined by \eqref{def norm}.

\begin{definition}\label{def: order}
For a solution $u$ to \eqref{eq}, let $x_0 \in Z(u)$. The $H^1$-\emph{vanishing order of $u$ in $x_0$} is defined as the number $\cO(u,x_0) \in \R^+$ with the property that 
\[
\limsup_{r \to 0^+} \frac1{r^{2\beta}} \|u\|_{x_0,r}^2  = \begin{cases} 0 & \text{if $0 <\beta< \cO(u,x_0)$} \\
+\infty  & \text{if $\beta > \cO(u,x_0)$}.
\end{cases}
\]  
If no such number exists, then 
\[
\limsup_{r \to 0^+} \frac1{r^{2 \beta}} \|u\|_{x_0,r}^2 = 0 \quad \text{for any $\beta>0$},
\]
and we set $\cO(u,x_0)=+\infty$. 
\end{definition}
As in \cite{SoTe2018}, we prove the following variant of Theorems \ref{thm: very strong V}.

\begin{theorem}\label{thm: very strong}
Let $0<  q <1$, $\lambda_+,\lambda_->0$, $0 \not \equiv u$ be stationary solution to \eqref{eq}, of class $C^{1,{\frac{q}{2-q}}-}$, and let $x_0 \in Z(u)$. Then 
\[
\cO(u,x_0) \in \left\{1,\frac2{2-q} \right\}, \quad \text{and} \quad \liminf_{r \to 0^+} \frac{\|u\|_{x_0,r}^2}{r^{2 \cO(u,x_0)}}  >0.
\]
\end{theorem}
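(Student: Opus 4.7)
The plan is to adapt the scheme of \cite[Section 3]{SoTe2018}, exploiting the $C^{1, q/(2-q)-}$ regularity and the domain variation formula \eqref{var dom} in place of the sublinear estimates used there. Throughout, write $\gamma^* := 2/(2-q) \in (1, 2)$. The first step is to set up a pair of monotonicity formulas. Since $u \in C^{1,\alpha}_{\loc}$ for every $\alpha < q/(2-q)$, we have $|u(x)| \leq C|x - x_0|$ and $|\nabla u(x)| \leq C$ near $x_0$, which immediately give $\|u\|_{x_0, r}^2 \leq C r^2$ and hence $\cO(u, x_0) \geq 1$. Combining \eqref{der H}--\eqref{der D} with $t = 2$ and the local Pohozaev identity \eqref{loc poh} (a consequence of \eqref{var dom} with $Y = (x - x_0)\eta_\delta$, as at the end of the proof of Proposition \ref{prop: Hol 1}), one computes the critical Weiss monotonicity
\[
\frac{d}{dr} W_{\gamma^*, 2}(u, x_0, r) \;=\; \frac{2}{r^{N + 2\gamma^*}} \int_{S_r(x_0)} (r \pa_\nu u - \gamma^* u)^2 \;\geq\; 0.
\]
In parallel, for $t = q$ and $\gamma \in [1, \gamma^*)$, the sub-critical Weiss functional $W_{\gamma, q}$ and the Almgren quotient $N_q(u, x_0, \cdot)$ are monotone up to lower-order corrections, exactly as in \cite[Propositions 3.1--3.4]{SoTe2018}. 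All these quantities thus admit a limit as $r \to 0^+$.

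Next, I perform a blow-up analysis. Set $u_r(x) := u(x_0 + rx)/\|u\|_{x_0, r}$ and $\alpha_r := (r^{\gamma^*}/\|u\|_{x_0, r})^{2 - q}$. By Definition \ref{def: order}, boundedness of $\alpha_r$ as $r \to 0^+$ is equivalent to $\cO(u, x_0) \leq \gamma^*$. Three regimes must be analysed. (i) If $\liminf_{r \to 0^+} \alpha_r > 0$ along some sequence $r_n \to 0^+$, Theorem \ref{prop: bounds}(h2) yields, up to a subsequence, a $C^{1,\beta}_{\loc}(\R^N)$ limit $\bar u$ with $\|\bar u\|_{0, 1} = 1$; passing to the limit in the rescaled equation and in \eqref{var dom} gives that $\bar u$ is a non-trivial $C^{1, q/(2-q)-}$ stationary solution of $-\Delta \bar u = \mu\bigl(\lambda_+(\bar u^+)^{q - 1} - \lambda_-(\bar u^-)^{q - 1}\bigr)$ with $\mu = \lim \alpha_{r_n} > 0$, and the scale invariance of the critical Weiss monotonicity forces $\bar u$ to be $\gamma^*$-homogeneous, so $\cO(u, x_0) = \gamma^*$. (ii) If $\alpha_{r_n} \to 0^+$ along some subsequence, the same compactness yields a non-trivial harmonic limit $\bar u$; the sub-critical Weiss/Almgren monotonicity forces $\bar u$ to be a homogeneous harmonic polynomial of degree $\cO(u, x_0) \in \N$, and since $1 \leq \cO(u, x_0) < \gamma^* < 2$, we conclude $\cO(u, x_0) = 1$. (iii) If $\alpha_{r_n} \to +\infty$ along some subsequence, then $\cO(u, x_0) > \gamma^*$; I contradict this by re-blowing up with the slower normalization $r^{\gamma^*}$, which produces a non-trivial harmonic homogeneous limit of integer degree $\geq 2 > \gamma^*$, forcing $W_{\gamma^*, 2}(u, x_0, 0^+) = -\infty$ and contradicting the monotonicity from Step 1 (and producing $u \equiv 0$ via the unique continuation Theorem \ref{thm: unique cont class}).

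Combining the three regimes yields $\cO(u, x_0) \in \{1, \gamma^*\}$. Moreover, the exclusion of (iii) along every subsequence is precisely the statement $\liminf_{r \to 0^+} \|u\|_{x_0, r}^2 / r^{2 \gamma^*} > 0$, giving non-degeneracy when $\cO(u, x_0) = \gamma^*$. When $\cO(u, x_0) = 1$, the non-degeneracy reduces to $\nabla u(x_0) \neq 0$, which follows from matching the Taylor expansion of $u$ at $x_0$ with the degree-$1$ harmonic limit $\bar u(x) = \langle a, x \rangle$, $a \neq 0$, produced in (ii). The main technical obstacle is that, since the $C^{1,\alpha}$ regularity fails at the critical exponent $\alpha = q/(2 - q)$, the compactness of the blow-up sequence $\{u_r\}$ cannot be obtained by direct elliptic estimates and must be extracted from Theorem \ref{prop: bounds}(h2); a secondary subtlety is the coupling between the critical functional $W_{\gamma^*, 2}$ and the sub-critical $W_{\gamma, q}$, both needed to rule out every vanishing order outside $\{1, \gamma^*\}$.
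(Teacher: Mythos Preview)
Your proposal inverts the logical order of the paper, and this creates a genuine circularity. In the paper, Theorem~\ref{thm: very strong} is proved \emph{first}, purely by monotonicity of the Weiss functionals $W_{\gamma,t}$ together with an ODE argument on $H$ (following \cite[Theorems~4.1 and~4.2]{SoTe2018}); the non-degeneracy so obtained is then used as an \emph{input} to the blow-up machinery (Theorem~\ref{prop: bounds}(h2), Lemma~\ref{lem: weak to strong}, Lemma~\ref{lem: v_r bdd}). You instead try to deduce the theorem from blow-up compactness. But the hypothesis ``$\{\alpha_n\}$ bounded'' in Theorem~\ref{prop: bounds}(h2) is precisely $\liminf_{r\to 0^+}\|u\|_{x_0,r}^2/r^{2\gamma^*}>0$, i.e.\ the non-degeneracy at order $\gamma^*$ that constitutes half of the statement. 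In your regimes (i)--(ii) this is harmless because $\alpha_{r_n}$ happens to be bounded along the chosen subsequence, but in regime (iii) you have no compactness to invoke. You also silently use $H^1(B_\rho)$-boundedness of $\{u_{r_n}\}$ for every $\rho>1$ in order to obtain a limit on all of $\R^N$; in the paper this is the separate Lemma~\ref{lem: v_r bdd}, proved only \emph{after} the present theorem.

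Your handling of regime (iii) is incorrect as written. First, $\alpha_{r_n}\to+\infty$ along a subsequence only says $\liminf_{r\to 0}\|u\|_{x_0,r}/r^{\gamma^*}=0$, not $\cO(u,x_0)>\gamma^*$. Second, under the renormalization $u(x_0+rx)/r^{\gamma^*}$ the coefficient becomes $\alpha_r\equiv 1$, so any limit solves the full nonlinear equation, not the Laplace equation; and if $\cO(u,x_0)>\gamma^*$ this sequence tends to $0$, so no non-trivial limit arises. Third, a direct estimate using only the $C^{1,\alpha}$ bound near $x_0$ shows that in this case $W_{\gamma^*,2}(u,x_0,0^+)=0$, not $-\infty$. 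The correct way to exclude $\cO(u,x_0)>\gamma^*$ is exactly the monotonicity-plus-ODE route: $W_{\gamma^*,2}(u,x_0,0^+)=0$ and monotonicity give $W_{\gamma^*,2}(u,x_0,r)\ge 0$ for small $r$, whence \eqref{der H} yields $H(u,x_0,r)\ge C r^{N-1+2\gamma^*}$, a contradiction. But that is the \cite{SoTe2018} argument the paper invokes, not a blow-up argument. (Incidentally, the non-degeneracy at $\cO=1$ is immediate from Proposition~\ref{thm: blow-up pre}: if $\nabla u(x_0)=0$ then $\cO(u,x_0)\ge \gamma^*-\eps$ for every $\eps>0$, so $\cO=1$ forces $\nabla u(x_0)\neq 0$; no blow-up is needed there either.)
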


For this result, we can proceed precisely as in the proof of \cite[Theorems 4.1 and 4.2]{SoTe2018} (see also the proof of \cite[Theorem 1.2]{SoTe2018}). Having in our hands \eqref{der H}, \eqref{der D}, and the derivatives of $W_{\gamma,t}$, the proofs do not require any change at all.

\subsection{Blow-up limits} 

In this subsection we prove Theorems \ref{thm: very strong V} and \ref{thm: blow-up}. As an intermediate step, we formulate a variant of Theorem \ref{thm: blow-up} in term of the vanishing order $\cO$.

\begin{theorem}\label{thm: blow-up '}
Let $0< q<1$, $\lambda_+,\lambda_->0$, and let $0 \not \equiv u \in C^{1,{\frac{q}{2-q}}-}_{\loc}(B_1)$ be a stationary solution to \eqref{eq}. If $x_0 \in Z(u)$ with $\cO(u,x_0) = 2/(2-q)$, then for every sequence $0<r_n \to 0^+$ we have, up to a subsequence,
\[
\frac{u(x_0+r_n x)}{\|u\|_{x_0,r_n}}
 \to \bar u, \qquad \text{in $C^{1,\alpha}_{\loc}(\R^N)$, for every }0<\alpha<\frac{q}{2-q},
\]
where $\bar u \in C^{1,{\frac{q}{2-q}}-}(\R^N)$ is a non-trivial $2/(2-q)$-homogeneous stationary solution to
\begin{equation}\label{eq limit 1 12}
-\Delta \bar u= \mu \left( \lambda_+ (\bar u^+)^{q-1} -\lambda_- (\bar u^-)^{q-1} \right) \quad \text{in $\R^N$}
\end{equation}
for some $\mu > 0$.
\end{theorem}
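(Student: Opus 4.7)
I would define $u_n(x) := u(x_0 + r_n x)/\|u\|_{x_0, r_n}$; a change of variables gives, for every $R > 0$ and $n$ large,
\[
\|u_n\|_{0, R}^2 = \frac{\|u\|_{x_0, R r_n}^2}{\|u\|_{x_0, r_n}^2}.
\]
The plan is to apply Theorem \ref{prop: bounds} under hypothesis (h2). The quantity
\[
\alpha_n = \left(\frac{r_n^{2/(2-q)}}{\|u\|_{x_0, r_n}}\right)^{2-q}
\]
is bounded since the non-degeneracy of Theorem \ref{thm: very strong} gives $\|u\|_{x_0, r}^2/r^{4/(2-q)} \ge c > 0$ for $r$ small. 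To obtain the $H^1$-boundedness of $\{u_n\}$ on each $B_R$, I would combine this non-degeneracy with monotonicity of the scale-invariant Weiss-type functional $W_{2/(2-q), 2}$ from Section \ref{sub: weiss}, whose monotonicity and existence of a finite limit at $r = 0$ follow, via \eqref{der H} and \eqref{der D}, by the arguments of \cite{SoTe2018}. This yields a uniform two-sided bound $c \le \|u\|_{x_0, R r_n}/\|u\|_{x_0, r_n} \le C$ for each fixed $R > 0$, hence the $H^1_{\loc}$ bound. Theorem \ref{prop: bounds} then provides, up to a subsequence, $u_n \to \bar u$ in $C^{1, \alpha}_{\loc}(\R^N)$ for every $0 < \alpha < q/(2-q)$. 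Nontriviality of $\bar u$ is immediate since $\|\bar u\|_{0, 1} = \lim_n \|u_n\|_{0, 1} = 1$.

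\textbf{Limit equation and stationarity.} By rescaling \eqref{eq}, each $u_n$ solves
\[
-\Delta u_n = \alpha_n \bigl(\lambda_+ (u_n^+)^{q-1} - \lambda_- (u_n^-)^{q-1}\bigr) \quad \text{pointwise in } \{u_n \neq 0\},
\]
and satisfies the analogue of the domain-variation formula \eqref{var dom} (with parameters $\alpha_n \lambda_\pm$) by scaling. Up to a further subsequence, $\alpha_n \to \mu \ge 0$. The $C^{1, \alpha}_{\loc}$ convergence passes the pointwise equation to $\bar u$ on $\{\bar u \neq 0\}$; the strong $H^1_{\loc}$ convergence (a byproduct of the $C^{1, \alpha}_{\loc}$ convergence) passes the domain-variation formula to the limit. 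Hence $\bar u$ is a stationary solution of \eqref{eq limit 1 12}.

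\textbf{Homogeneity and positivity of $\mu$.} For the $2/(2-q)$-homogeneity I would apply the standard Weiss-blow-up argument. By scale-invariance of $W_{2/(2-q), 2}$, one has $W_{2/(2-q), 2}(u_n, 0, \rho) = c_n\, W_{2/(2-q), 2}(u, x_0, \rho r_n)$ for a bounded factor $c_n$, so the right-hand side tends to a quantity independent of $\rho$ as $n \to \infty$. Passing to the limit on the left via $C^{1, \alpha}_{\loc}$ convergence yields $W_{2/(2-q), 2}(\bar u, 0, \rho)$ constant in $\rho$, and the explicit formula for its derivative (whose integrand is proportional to $(\partial_\nu \bar u - \tfrac{2}{(2-q)|x|} \bar u)^2$) forces $\partial_\nu \bar u = \tfrac{2}{(2-q)|x|}\bar u$, i.e.\ $\bar u$ is $2/(2-q)$-homogeneous. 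Finally, $\mu > 0$: otherwise $\bar u$ would be harmonic in $\R^N \setminus \{\bar u = 0\}$ and then, by \cite[Theorem 1.1]{TavTer} as used in Lemma \ref{lem: ar non 0}, harmonic throughout $\R^N$; but a nontrivial homogeneous harmonic function on $\R^N$ has integer degree, contradicting the value $2/(2-q) \in (1, 2)$.

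\textbf{Main obstacle.} The delicate step is the $H^1$-boundedness of the blow-ups, equivalently a uniform upper bound on $\|u\|_{x_0, R r_n}/\|u\|_{x_0, r_n}$: this requires the full force of the Weiss monotonicity at the critical scaling combined with the non-degeneracy just established in Theorem \ref{thm: very strong}. Once this is in hand, Theorem \ref{prop: bounds} and the standard blow-up plus Weiss scheme deliver the remaining claims.
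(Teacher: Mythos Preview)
Your overall strategy matches the paper's: bound $\alpha_n$ via the non-degeneracy of Theorem~\ref{thm: very strong}, obtain $C^{1,\alpha}_{\loc}$ compactness from Theorem~\ref{prop: bounds} under (h2), pass the equation and the domain-variation formula to the limit, deduce $\gamma_q$-homogeneity from constancy of $W_{\gamma_q,2}$, and rule out $\mu=0$. The substantive gap is in the doubling step.

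You assert that ``Weiss monotonicity plus non-degeneracy'' yields the upper bound $\|u\|_{x_0,Rr}/\|u\|_{x_0,r}\le C(R)$, but this does not follow directly. Monotonicity of $W_{\gamma_q,2}$ only controls the combination $r^{-(N-2+2\gamma_q)}D_2-\gamma_q r^{-(N-1+2\gamma_q)}H$, not $H$ or $\int|\nabla u|^2$ separately; to get, say, monotonicity of $H(r)/r^{N-1+2\gamma_q}$ from \eqref{der H} you would need $W_{\gamma_q,q}(r)\ge 0$, which in turn requires knowing $W_{\gamma_q,2}(0^+)\ge 0$, and neither the bare monotonicity nor Proposition~\ref{thm: blow-up pre}(ii) gives that. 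Neither \cite{SoTe2018} nor the present paper argues this way. Lemma~\ref{lem: v_r bdd} proceeds by contradiction: if the ratio diverged along a sequence, one extracts (on the unit ball, where $\|v_{\rho r_n}\|_{0,1}=1$) a $C^{1,\alpha}$ limit $\bar v$ that is nontrivial but vanishes on $B_{1/\rho}$, with $\alpha_{\rho r_n}\to 0$. In the sublinear range of \cite{SoTe2018} one reads off $\Delta\bar v=0$ from the limit equation and unique continuation finishes. In the singular range $q\in(0,1)$ this last step fails, because the equation holds only off the nodal set; the paper instead rescales the Pohozaev-type identity \eqref{loc poh}, passes it to the limit, and applies \cite[Theorem~1.1]{TavTer} to remove $\{\bar v=0\}$ and conclude harmonicity. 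You invoke Tavares--Terracini only later, for $\mu>0$, but the doubling step is precisely where the singular case demands this extra ingredient.

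On the positive side, your treatment of $\mu>0$ is a clean variant of the paper's. Having already established $\gamma_q$-homogeneity (the derivative of $W_{\gamma_q,2}$ reduces to the pure square regardless of the value of $\mu$), you exclude $\mu=0$ because a nontrivial harmonic function cannot be $\gamma_q$-homogeneous with $\gamma_q\in(1,2)$. The paper instead rules out $\alpha_n\to 0$ before proving homogeneity, by comparing two frequency bounds for the harmonic limit (at least $2$ from \cite[Lemma~4.1]{Wei01}, at most $\gamma_q$ from the Weiss limit); both routes are valid.
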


Throughout this section the value $2/(2-q)$ will be denoted by $\gamma_q$. For $0<r<R <\dist(x_0,\pa B_1)$, we consider
\[
v_r(x):= \frac{u(x_0+rx)}{\|u\|_{x_0,r}} \quad \implies \quad \|v_r\|_{0,1} = 1.
\]
Then $v_r$ is a good solution of
\[
-\Delta v_r = 
 \left( \frac{r^{\gamma_q}}{\|u\|_{x_0,r}} \right)^{\frac{2}{\gamma_q}} \Big( \lambda_+ (v_r^+)^{q-1} - \lambda_- (v_r^-)^{q-1} \Big)\quad \text{in } \frac{1}{r}B_R.
\]
Notice that the scaled domains exhaust $\R^N$ as $r \to 0^+$, and that there exists $C>0$ such that
\[
\alpha_r := \left( \frac{r^{\gamma_q}}{\|u\|_{x_0,r}} \right)^{\frac{2}{\gamma_q}} \le C \qquad \text{for every $0<r<R$},
\]
by non-degeneracy.

\begin{lemma}\label{lem: weak to strong}
Let $q \in (0,1)$ and $\rho>0$. Let $0<r_n \to 0^+$ be such that $v_{r_n} \weak v$ weakly in $H^1(B_\rho)$. Then, up to a subsequence, we have that $v_{r_n} \to v$ in $C^{1,\alpha}_{\loc}(B_\rho)$ for every $0<\alpha<\min\{q/(2-q),1\}$.
\end{lemma}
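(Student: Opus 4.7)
The plan is to apply Theorem \ref{prop: bounds} under hypothesis (h2) directly to the blow-up sequence $\{v_{r_n}\}$, and then to identify the resulting $C^{1,\alpha}_{\loc}$ limit with the weak $H^1$ limit $v$. Hypothesis (h2) is tailor-made for precisely this situation: $u$ is a stationary solution of class $C^{1,q/(2-q)-}$ by the standing assumption of this subsection, $v_{r_n}$ is a blow-up of the form \eqref{def blow up} with radii $\rho_n = r_n \to 0^+$, and $H^1(B_\rho)$-boundedness of the sequence is automatic from the weak convergence hypothesis.

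The only nontrivial hypothesis to verify is boundedness of
\[
\alpha_{r_n} = \left(\frac{r_n^{2/(2-q)}}{\|u\|_{x_0,r_n}}\right)^{2-q}.
\]
This is where the non-degeneracy result of Theorem \ref{thm: very strong} enters: since $\cO(u,x_0) \in \{1, 2/(2-q)\}$ and $r_n \to 0^+$, the lower bound $\|u\|_{x_0,r}^2 \ge c\, r^{2\cO(u,x_0)}$ combined with $\cO(u,x_0) \le 2/(2-q)$ yields $\|u\|_{x_0,r_n} \ge c'\, r_n^{2/(2-q)}$ for all $n$ large, whence $\{\alpha_{r_n}\}$ is bounded (in fact $\alpha_{r_n}\to 0$ in the case $\cO(u,x_0)=1$).

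With hypothesis (h2) verified, Theorem \ref{prop: bounds} provides, for any prescribed $\alpha' \in (0, q/(2-q))$, uniform $C^{1,\alpha'}_{\loc}(B_\rho)$ bounds for $\{v_{r_n}\}$. Given any $\alpha \in (0, q/(2-q))$, I would pick $\alpha' \in (\alpha, q/(2-q))$ and invoke Arzel\`a--Ascoli together with the compact embedding $C^{1,\alpha'}(K) \hookrightarrow C^{1,\alpha}(K)$ on compact sets $K \subset B_\rho$ to extract a subsequence converging in $C^{1,\alpha}_{\loc}(B_\rho)$ to some limit $\tilde v$. Since $C^{1,\alpha}_{\loc}$ convergence is strictly stronger than weak $H^1_{\loc}$ convergence, the uniqueness of weak limits forces $\tilde v = v$. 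A diagonal argument along a sequence $\alpha_k \uparrow q/(2-q)$ then produces a single subsequence for which convergence holds in $C^{1,\alpha}_{\loc}(B_\rho)$ for every admissible exponent.

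There is no serious obstacle: the heavy lifting has already been carried out in Theorem \ref{prop: bounds} (the blow-up compactness statement) and in Theorem \ref{thm: very strong} (the non-degeneracy that keeps $\alpha_{r_n}$ bounded); the argument here is essentially a bookkeeping verification followed by a standard diagonal extraction.
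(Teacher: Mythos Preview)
Your proposal is correct and follows essentially the same approach as the paper: the paper's proof is a one-line invocation of Theorem~\ref{prop: bounds} under (h2) together with Remark~\ref{rmk: on bounds (0,1)}, with the boundedness of $\alpha_{r_n}$ already noted (via non-degeneracy) in the paragraph preceding the lemma. You have simply unpacked the details---verification of (h2), Arzel\`a--Ascoli extraction, identification of limits, diagonal selection---that the paper leaves implicit.
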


\begin{proof}
The convergence follows directly by Theorem \ref{prop: bounds}, see also Remark \ref{rmk: on bounds (0,1)}.
\end{proof}

The next result corresponds to \cite[Lemma 6.3]{SoTe2018}.

\begin{lemma}\label{lem: v_r bdd}
Let $\rho>1$ be fixed. There exists $ r_\rho>0$ small enough such that the family $\{v_r: r \in (0,r_\rho)\}$ is bounded in $H^1(B_\rho)$.
\end{lemma}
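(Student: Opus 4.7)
The plan is to reduce the statement to a two-sided bound on the growth of $\|u\|_{x_0,\cdot}$ near $x_0$, controlling the denominator by non-degeneracy and the numerator by a Weiss-type monotonicity argument.

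First I would perform a direct change of variables in the definition of $v_r$ to rewrite
\[
\|v_r\|_{0,\rho}^2 = \frac{1}{\rho^{N-2}}\int_{B_\rho} |\nabla v_r|^2 + \frac{1}{\rho^{N-1}} \int_{S_\rho} v_r^2 = \frac{\|u\|_{x_0, r\rho}^2}{\|u\|_{x_0, r}^2}.
\]
Since $\|\cdot\|_{x_0,\rho}$ is equivalent to the standard $H^1(B_\rho)$ norm (for fixed $\rho>0$), boundedness of $\{v_r\}$ in $H^1(B_\rho)$ is equivalent to uniform boundedness of this ratio for $r \in (0, r_\rho)$.

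Second, I would control the denominator from below via Theorem \ref{thm: very strong}: since $\cO(u, x_0) = \gamma_q = 2/(2-q)$, there exist $c>0$ and $r_1>0$ such that $\|u\|_{x_0,r}^2 \ge c\, r^{2\gamma_q}$ for every $r \in (0,r_1)$. The harder task, and the main obstacle, is to obtain a matching upper bound on the numerator, namely $\|u\|_{x_0, s}^2 \le C\, s^{2\gamma_q}$ for $s$ small; the defining property of $\cO(u, x_0)$ only gives $\|u\|_{x_0,s}^2 = o(s^{2\beta})$ for $\beta < \gamma_q$, which is too weak at the critical exponent. To upgrade this, I would appeal to the Weiss-type functional $W_{\gamma_q, t}(u, x_0, \cdot)$ introduced in Subsection \ref{sub: weiss} and to its monotonicity (in the sense of \cite[Proposition 2.3]{SoTe2018}), which forces a finite limit at $0^+$ and, combined with the non-degeneracy, pins the Almgren-type quotient $N_t(u,x_0,r)$ to converge to $\gamma_q$ as $r \to 0^+$.

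Concretely, using \eqref{der H}, one has
\[
\frac{H'(u,x_0,r)}{H(u,x_0,r)} = \frac{N-1}{r} + \frac{2 D_q(u,x_0,r)}{H(u,x_0,r)} \le \frac{N-1 + 2\gamma_q + o(1)}{r}
\]
as $r \to 0^+$; integrating between $s = r\rho$ and a fixed $R_0 < \dist(x_0,\pa B_1)$ yields $H(u,x_0, s) \le C\, s^{N-1 + 2\gamma_q}$ for $s$ small. A parallel bound on $\int_{B_s(x_0)}|\nabla u|^2$ follows from $D_q = (r/2)(H'/r^{N-1} - (N-1)H/r^N)\cdot(\text{stuff})$ and the boundedness of $N_q$, giving $\|u\|_{x_0,s}^2 \le C'\, s^{2\gamma_q}$. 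Setting $r_\rho := r_1/\rho$ and combining the two sides,
\[
\|v_r\|_{0,\rho}^2 \le \frac{\|u\|_{x_0,r\rho}^2}{\|u\|_{x_0,r}^2} \le \frac{C'\,(r\rho)^{2\gamma_q}}{c\, r^{2\gamma_q}} = \frac{C'\,\rho^{2\gamma_q}}{c} \qquad \forall\, r \in (0, r_\rho),
\]
which delivers the uniform $H^1(B_\rho)$ bound.
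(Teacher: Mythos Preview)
Your reduction to the ratio $\|u\|_{x_0,r\rho}^2/\|u\|_{x_0,r}^2$ and the lower bound on the denominator via Theorem~\ref{thm: very strong} are both correct, and match the paper's opening move. The gap is in the upper bound on the numerator. You assert that Weiss monotonicity, combined with non-degeneracy, ``pins'' $N_t(u,x_0,r)\to\gamma_q$, and then integrate $H'/H$ to obtain $\|u\|_{x_0,s}^2\le C s^{2\gamma_q}$. But monotonicity of $W_{\gamma_q,t}$ only ensures that $W_{\gamma_q,t}(0^+)\in[-\infty,W_{\gamma_q,t}(R_0)]$; it does not by itself yield a finite limit, let alone the value $0$ that $N_t\to\gamma_q$ would require. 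The a priori information available (Proposition~\ref{thm: blow-up pre}) gives only $|W_{\gamma_q,t}(r)|\le C_\eps r^{-2\eps}$ for every $\eps>0$, which does not exclude, say, logarithmic divergence. Your integration of $H'/H$ also presupposes $H(u,x_0,r)>0$ for all small $r$; at this stage only the $\|\cdot\|_{x_0,r}$-nondegeneracy of Theorem~\ref{thm: very strong} is in hand, whereas the $H$-version (Theorem~\ref{thm: very strong V}) is deduced \emph{from} the blow-up analysis this lemma is meant to feed. In short, the sharp growth bound at the critical exponent $\gamma_q$ is precisely the missing piece, and the Weiss sketch does not supply it.

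The paper avoids this circularity by arguing by contradiction. Assuming $\|u\|_{x_0,\rho r_n}/\|u\|_{x_0,r_n}\to\infty$ along some $r_n\to 0^+$, one first shows (as in \cite[Lemma~6.3]{SoTe2018}) that $\|u\|_{x_0,\rho r_n}/(\rho r_n)^{\gamma_q}\to\infty$, so the rescaled coefficients $\alpha_{\rho r_n}\to 0$, and that the $H$-part of the norm stays bounded below along this sequence. The blow-ups at scale $\rho r_n$ then converge in $C^{1,\alpha}_{\loc}(B_1)$ (Lemma~\ref{lem: weak to strong}) to a nontrivial limit $\bar v$ that vanishes identically on $B_{1/\rho}$. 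Since the equation is singular one cannot simply pass to the limit in it; instead the paper passes to the limit in the scaled Pohozaev identity~\eqref{loc poh} and invokes \cite[Theorem~1.1]{TavTer} to conclude that $\bar v$ is harmonic in all of $B_1$, contradicting unique continuation. So the mechanism is a qualitative Liouville/unique-continuation argument on the limit profile, not a direct quantitative growth estimate.
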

\begin{proof}
It results that $\|v_r\|_{0,\rho} = \|u\|_{x_0,\rho r}/\|u\|_{x_0,r}$, and hence the thesis follows if there exist $r_\rho>0$ and a constant $C_\rho>0$ such that
\[
\frac{\|u\|_{x_0,\rho r}}{ \|u\|_{x_0,r}} \le C_\rho, \quad \text{for every $0<r<r_\rho$}.
\]
Let us suppose by contradiction that for a sequence $0<r_n \to 0^+$ it results 
\begin{equation}\label{30 11 1}
\frac{\|u\|_{x_0,\rho r_n}}{ \|u\|_{x_0,r_n}} \to +\infty \quad \text{as $n \to \infty$}.
\end{equation}
In such case, proceeding as in \cite[Lemma 6.3]{SoTe2018}, we can show that
\begin{equation}\label{30 11 2}
\frac{\|u\|_{x_0,\rho r_n}}{ (\rho r_n)^{\gamma_q}} \to +\infty \quad \text{as $n \to \infty$},
\end{equation}
and moreover
\[
\frac{H(u,x_0, \rho r_n)}{(\rho r_n)^{N-1}\|u\|_{x_0, \rho r_n}^2 } \ge \frac1{2(\gamma_q+1)} >0
\]
for every $n$ large. Now, by Lemma \ref{lem: weak to strong}, we have that $v_{r_n} \to \bar v$ in $C^{1,\alpha}_{\loc}(B_1)$, with $H(\bar v,0,1) >0$ and hence $\bar v \not \equiv 0$ in $B_1$. Moreover, $\bar v \equiv 0$ in $B_{1/\rho}$ by \eqref{30 11 1}, and $\alpha_{r_n} \to 0$ by \eqref{30 11 2}. If the limit $\bar v$ is harmonic, this is in contradiction with the unique continuation property. This is what happen in the sublinear case $q \in [1,2)$, see \cite{SoTe2018}. The problem is that, due to the singularity in the equations, the harmonicity of $\bar v$ is not evident, even with $\alpha_{r_n} \to 0$, since we cannot pass to the limit in the equation. We then appeal to the variation of domains formula \eqref{var dom}: we know that \eqref{loc poh} holds for $u$ (with $\mu_\pm=\lambda_\pm$, for every $r \in (0,1)$). Scaling it, we find that for every $r \in (0,1)$, and for every $n$,
\[
\begin{split}
\int_{S_r} |\nabla v_{r_n}|^2 & = \frac{N-2}{r} \int_{B_r} |\nabla v_{r_n}|^2 - \frac{ 2N \alpha_{r_n}}{q r} \int_{B_r} F_{\lambda_+, \lambda_-}(v_{r_n}) \\
& \qquad + \int_{S_r}\left(2 (\pa_\nu v_{r_n})^2 + \frac{2 \alpha_{r_n}}{q} F_{\lambda_+, \lambda_-}(v_{r_n})\right).
\end{split}
\]
Passing to the limit as $n \to \infty$, we obtain by $C^{1,\alpha}_{\loc}(B_1)$ convergence that
\[
\int_{S_r} |\nabla \bar v|^2 = \frac{N-2}{r} \int_{B_r} |\nabla \bar v|^2 + 2 \int_{S_r}(\pa_\nu \bar v)^2,
\]
for every $r \in (0,1)$. Moreover, by $C^{1,\alpha}$ convergence we have that $\bar v$ is harmonic in the open set $\{\bar v \neq 0\}$. Therefore, the pair $(\bar v^+, \bar v^-)$ fulfills the assumption of \cite[Theorem 1.1]{TavTer}, whence it follows that in the end, even if the equation is singular, the limit $\bar v$ must be harmonic in the whole $B_1$. As observed, this gives the desired contradiction.
\end{proof}

We can now complete the proof of Theorem \ref{thm: blow-up '}.

\begin{proof}[Proof of Theorem \ref{thm: blow-up '}]
Due to Lemma \ref{lem: v_r bdd}, the sequence $\{v_{r_n}\}$ is bounded in $H^1_{\loc}(\R^N)$; thus, compactness argument and Lemma \ref{lem: weak to strong}, together with a diagonal selection, imply that up to a subsequence $v_{r_n} \to v$ strongly in $C^{1,\alpha}_{\loc}(\R^N)$, for every $0<\alpha<q/(2-q)$. By non-degeneracy, up to a subsequence we have two possibilities:
\[
\text{either} \quad \frac{\|u\|_{x_0,r_n}}{r_n^{\gamma_q}} \to \ell  \in (0,+\infty), \quad \text{or} \quad \frac{\|u\|_{x_0,r_n}}{r_n^{\gamma_q}} \to +\infty.
\]
If $\|u\|_{r_n}/r_n^{\gamma_q} \to \ell$ finite, then by Theorem \ref{prop: bounds} and by convergence we have that $v$ is a $C^{1,{\frac{q}{2-q}}-}(\R^N)$ stationary solution to \eqref{eq limit 1 12}. The homogeneity can be proved following word by word the argument in \cite[Theorem 6.1]{SoTe2018}.

It remains to study the case $\|u\|_{r_n}/r_n^{\gamma_q} \to +\infty$. This means that $\alpha_{r_n} \to 0$ as $n \to \infty$, so that, proceeding as in Lemma \ref{lem: v_r bdd}, we can show that $v$ is harmonic: first, by scaling identity \eqref{loc poh} and passing to the limit, we deduce that
\[
\int_{S_r} |\nabla v|^2 = \frac{N-2}{r} \int_{B_r} |\nabla v|^2 + 2 \int_{S_r}(\pa_\nu v)^2,
\]
for every $r >0$. Also, $v$ is harmonic in $\R^N \setminus \{v = 0\}$, and hence $(v^+,v^-)$ fulfills the assumptions of \cite[Theorem 1.1]{TavTer}. It follows that $v$ is harmonic in $\R^N$. Moreover, $\cO(v,0) >1$ by $C^{1,\alpha}$ convergence, and hence by \cite[Lemma 4.1]{Wei01} (and the fact that harmonic functions always vanish with integer order) we have that
\[
\frac{1}{t^{N-2}}\int_{B_t} |\nabla v|^2 \ge \frac{2}{t^{N-1}} \int_{S_t} v^2
\]
for every $t >0$. On the other hand, arguing as in \cite[Theorem 6.1]{SoTe2018}, we also find
\[
\frac{1}{t^{N-2}}\int_{B_t} |\nabla v|^2 \le \frac{\gamma_q}{t^{N-1}} \int_{S_t} v^2
\]
for $t>0$, which is a contradiction since $\gamma_q<2$ and $v$ is nontrivial. This means that $\|u\|_{r_n}/r_n^{\gamma_q} \to +\infty$ can never take place.
\end{proof}

Theorems \ref{thm: very strong V} and \ref{thm: blow-up} follow by Theorem \ref{thm: blow-up '} exactly as in \cite{SoTe2018} Theorems 1.3, 1.4 and 1.6 follow by Theorem 6.1. 

\subsection{Hausdorff dimension of the singular set}

\begin{proof}[Proof of Theorem \ref{thm: Hausdorff}]
Let 
\[
\cF:=\left\{ v \in C^{1,{\frac{q}{2-q}}-}(\R^N)\left| \begin{array}{l} \text{$v$ is a stationary solution to} \\
-\Delta v = \mu \Big(\lambda_+ (v^+)^{q-1}- \lambda_-(v^-)^{q-1}\Big) \quad \text{in $B_\rho$} \\ \text{for some $\rho > 2$ and some $\mu > 0$}. \end{array}\right.\right\},
\]
endowed with $C^{1,\alpha}_{\loc}$ convergence (for some $0<\alpha<q/(2-q)$), let $\cC$ be the class of all the relatively closed subsets of $B_1$, and let us define the map $\Sigma: \cF \to \cC$ by 
\[
\Sigma(v) := \{x \in B_1: v(x) =  |\nabla v(x)| = 0\}.
\]
The family $\cF$ is closed under scalings and translations. The existence of a homogeneous blow-up within the class $\cF$ follows by Theorem \ref{thm: blow-up}, as well as the singular set assumption (here we use the $C^{1,\alpha}_{\loc}$ convergence of the blow-ups). Thus, the Federer reduction principle (for which we refer to \cite{Che,Sim,TavTer}) is applicable, and implies that there exists an integer $0 \le d \le N-1$ such that $\Sigma(v) \le d$ for every $v \in \cF$. Moreover, there exists a $d$-dimensional linear subspace $E \subset \R^N$, and a $\gamma$-homogeneous function $v \in \cF$ (for some $\gamma>0$) such that $\{v=|\nabla v|=0\}=E$, and 
\begin{equation}\label{d-dim}
\frac{v(x_0+\lambda x)}{\lambda^\gamma} = v(x) \qquad \text{for every $x_0 \in E$ and $\lambda>0$};
\end{equation}
identity \eqref{d-dim} means that $v$ is $\gamma$-homogeneous with respect to all the points in $E$, and hence, up to a rotation, it depends only on $N-d$ variables. Let us suppose by contradiction that $d=N-1$. Then, without loss of generality, we can suppose that $E=\{x_1=0\}$ and $v(x_1,\dots,x_N) = w(x_1)$ for a function $w \in C^{1,{\frac{q}{2-q}}-}(\R)$. It is clear that 
\[
-w'' = \mu \big(\lambda_+ (w^+)^{q-1}- \lambda_-(w^-)^{q-1}\big) \qquad \text{in $\R \setminus \{w=0\}$} 
\]
with $w(0) = w'(0)=0$. Moreover, it is not difficult to prove that $w$ satisfies the variation of domains formula
\[
\int_{\R} \varphi' \left( \frac{1}{2} (w')^2 - \frac{F_{\mu \lambda_+, \mu \lambda_-}(w)}{q} \right) = 0
\]
for every $\varphi \in C^\infty_c(\R)$, we refer to \cite[Lemma 5.9]{DaWaWe} for details. This means precisely that the Hamiltonian function  
\[
\cH(w,w') = \frac12 |w'|^2 + \frac{\mu \lambda_+}q (w^+)^q + \frac{\mu \lambda_-}{q}(w^-)^q
\]
is constant along $w$. Since the the only level curve of $\cH$ crossing the origin of the phase plane is the constant trajectory $0$, we deduce that necessarily $w \equiv 0$, in contradiction with the fact that $v \not \equiv 0$. The discreteness of $\Sigma(u)$ in dimension $N=2$ can be proved combining the above argument with the proof of \cite[Theorem 1.7 - $N=2$]{SoTe2018}. 
\end{proof}

\section{Local minimizers are good solutions}\label{sec: min are good}

The purpose of this final sections consists in showing that the class of good solutions is sufficiently large, and for instance includes local minimizers for the functional associated with \eqref{eq} with a fixed trace. Let then $u_0 \in H^1(B_1) \cap C^0(\overline{B_1})$, $\lambda_+,\lambda_->0$, and $q \in (0,1)$. We consider the minimization problem
\[
\inf\left\{J(u) : u \in \cK\right\},
\]
where $\cK:= \left\{ u \in H^1(B_1): u-u_0 \in H_0^1(B_1)\right\}$, and 
\[
J(u) := \int_{B_1} \left( \frac12|\nabla u|^2 - \frac{\lambda_+}{q} (u^+)^q - \frac{\lambda_-}{q} (u^-)^q \right).
\]

\begin{proposition}
There exists a minimizer $\bar u$ of $J$ on $\mathcal{K}$.
\end{proposition}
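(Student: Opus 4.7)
\begin{proof}[Proof sketch]
The plan is to apply the direct method of the calculus of variations. The two non-trivial checks are (a) that $J$ is coercive and bounded below on $\mathcal{K}$, and (b) that $J$ is weakly lower semicontinuous on $\mathcal{K}$. Both rely crucially on the sublinear range $q\in(0,1)$.

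\emph{Coercivity.} For any $u\in\mathcal{K}$ write $u=(u-u_0)+u_0$ with $u-u_0\in H_0^1(B_1)$. By Poincar\'e's inequality,
\[
\|u\|_{L^2(B_1)}\le \|u-u_0\|_{L^2(B_1)}+\|u_0\|_{L^2(B_1)}\le C\bigl(\|\nabla u\|_{L^2(B_1)}+\|u_0\|_{H^1(B_1)}\bigr).
\]
Since $q<2$, H\"older's inequality on $B_1$ gives $\int_{B_1}|u|^q\le C\|u\|_{L^2}^q$, hence
\[
\int_{B_1}(u^{\pm})^q\le C\bigl(\|\nabla u\|_{L^2}^q+1\bigr),
\]
with $C$ depending on $u_0,q,\lambda_\pm,N$. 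Young's inequality with exponent $2/q>1$ applied to the term $\|\nabla u\|_{L^2}^q$ then yields, for a suitable constant $C'$,
\[
J(u)\ge \frac14\|\nabla u\|_{L^2(B_1)}^2-C'.
\]
Thus $\inf_{\mathcal{K}}J>-\infty$, and every minimizing sequence is bounded in $H^1(B_1)$.

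\emph{Extraction of a weak limit.} Let $\{u_n\}\subset\mathcal{K}$ be a minimizing sequence. By reflexivity of $H^1(B_1)$, up to a subsequence $u_n\weak \bar u$ weakly in $H^1(B_1)$. Since $u_n-u_0\in H_0^1(B_1)$ and $H_0^1(B_1)$ is a closed linear subspace (hence weakly closed), we have $\bar u-u_0\in H_0^1(B_1)$, so $\bar u\in\mathcal{K}$.

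\emph{Lower semicontinuity.} The Dirichlet integral $u\mapsto\tfrac12\int_{B_1}|\nabla u|^2$ is weakly lower semicontinuous on $H^1(B_1)$. For the nonlinear part, the Rellich-Kondrachov theorem yields $u_n\to\bar u$ strongly in $L^2(B_1)$ and a.e.\ in $B_1$ (along a further subsequence). The map $s\mapsto (s^{\pm})^q$ is continuous and satisfies $|(s^{\pm})^q|\le 1+s^2$, so by the dominated convergence theorem,
\[
\int_{B_1}(u_n^{\pm})^q\longrightarrow\int_{B_1}(\bar u^{\pm})^q.
\]
Combining, $J(\bar u)\le\liminf_{n\to\infty}J(u_n)=\inf_{\mathcal{K}}J$, and $\bar u$ is a minimizer.
\end{proof}

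\textbf{Remark on the main difficulty.} There is no serious obstacle in the existence step: the sublinearity $q<1$ makes coercivity automatic and makes the nonlinear part of $J$ weakly continuous via compact embedding. The delicate issues (meaning of ``solution'' at zeros of $u$, regularity, approximation by Definition~\ref{def: good}) are separate points to be addressed afterwards, when one shows that this minimizer is in fact a good solution in the sense of Definition~\ref{def: good}.
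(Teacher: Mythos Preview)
Your proof is correct and follows essentially the same approach as the paper: coercivity via Poincar\'e and the sublinear exponent $q<2$, then the direct method. The only cosmetic difference is that the paper invokes the Poincar\'e inequality with a boundary trace term (using that $u|_{S_1}=u_0|_{S_1}$ is fixed), while you decompose $u=(u-u_0)+u_0$ and apply the $H_0^1$ Poincar\'e inequality; and you spell out the weak lower semicontinuity step (Rellich--Kondrachov plus dominated convergence for the nonlinear part), which the paper leaves implicit in the phrase ``direct method of the calculus of variations.''
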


\begin{proof}
If $u \in \mathcal{K}$, then by the Poincar\'e inequality
\begin{equation}\label{18 1 1}
\begin{split}
J(u) &\ge \int_{B_1} |\nabla u|^2 - C \int_{B_1} |u|^q  \ge \int_{B_1} |\nabla u|^2 - C\left(\int_{B_1} u^2\right)^{\frac{q}2} \\
& \ge \int_{B_1} |\nabla u|^2- C \left(\int_{S_1} u^2 + \int_{B_1} |\nabla u|^2 \right)^\frac{q}2 \\
& \ge \int_{B_1} |\nabla u|^2 - C \left(\int_{B_1} |\nabla u|^2 \right)^\frac{q}2 - C.
\end{split}
\end{equation}
Since $q < 2$, this shows that $J$ is coercive and bounded from below on the closed convex set $\cK$. Therefore, the existence of a minimizer is given by the direct method of the calculus of variations.
\end{proof}

Now we prove that:

\begin{proposition}\label{prop: min are good}
In the above setting, $\bar u$ is a good solution.
\end{proposition}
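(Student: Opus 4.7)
The plan is to construct the approximating sequence required by Definition \ref{def: good} by minimizing a regularized functional augmented with an $L^2$ penalty that pins the approximants to the prescribed $\bar u$. Concretely, for $\eps_n := 1/n$ let $G_n(s) := \frac{1}{q}\bigl[(\eps_n^2 + s^2)^{q/2} - \eps_n^q\bigr]$, so that $G_n'(s) = s/(\eps_n^2 + s^2)^{(2-q)/2}$, and set
\begin{equation*}
J_n(u) := \int_{B_1}\!\left(\tfrac12|\nabla u|^2 - \lambda_+ G_n(u^+) - \lambda_- G_n(u^-)\right), \qquad \tilde J_n(u) := J_n(u) + \tfrac12\!\int_{B_1}(u-\bar u)^2.
\end{equation*}
Thanks to the subadditivity $(\eps_n^2 + u^2)^{q/2} \le \eps_n^q + |u|^q$, the coercivity argument used in \eqref{18 1 1} applies verbatim (uniformly in $n$), and together with weak lower semicontinuity it yields a minimizer $u_n \in \cK$ of $\tilde J_n$. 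The associated Euler--Lagrange equation is exactly condition (ii) of Definition \ref{def: good} with $f_n := u_n - \bar u$, so I am reduced to checking the convergence requirements (i) of that definition together with the uniform $L^\infty$ bound on $f_n$.

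To verify the convergence I would plug $\bar u$ itself as a competitor, obtaining
\begin{equation*}
J_n(u_n) + \tfrac12\|u_n - \bar u\|_{L^2}^2 \le \tilde J_n(u_n) \le \tilde J_n(\bar u) = J_n(\bar u).
\end{equation*}
Coercivity furnishes a uniform $H^1$-bound on $\{u_n\}$, so up to a subsequence $u_n \rightharpoonup \tilde u$ in $H^1$ and strongly in $L^q$. Dominated convergence gives $J_n(\bar u) \to J(\bar u)$, while weak lower semicontinuity of the Dirichlet integral together with strong $L^q$ convergence yields $\liminf_n J_n(u_n) \ge J(\tilde u)$; combining,
\begin{equation*}
J(\tilde u) + \tfrac12\liminf_n \|u_n - \bar u\|_{L^2}^2 \le J(\bar u).
\end{equation*}
The minimality $J(\bar u) \le J(\tilde u)$ then forces $\tilde u = \bar u$ and $u_n \to \bar u$ in $L^2$, so $f_n = u_n - \bar u$ tends to $0$ in $L^2$ and almost everywhere along a further subsequence, and $u_n \rightharpoonup \bar u$ weakly in $H^1$.

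The main obstacle, and the reason for adopting the penalty term, is the uniform $L^\infty$ bound on $f_n$, which reduces to a uniform $L^\infty$ bound on $u_n$. Following the Kato-type argument of Lemma \ref{lem: unif l infty}, I would pick $M > \max\{1, \|\bar u\|_{L^\infty}\}$ and observe that on $\{u_n > M\}$ the nonlinear term is bounded by $\lambda_+ u_n^{q-1} \le \lambda_+$ (where $q<1$ is essential), while $-(u_n - \bar u) \le -(u_n - M)$; hence $w_n := (u_n - M)^+ \in H^1_0(B_1)$ satisfies $-\Delta w_n + w_n \le \lambda_+$, so the maximum principle yields $w_n \le \lambda_+$ uniformly in $n$, and the symmetric estimate on $(u_n + M)^-$ closes the bound. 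With $f_n \in L^\infty$ and bounded right-hand side, elliptic bootstrap then furnishes $u_n \in C^{1,\alpha}(\overline{B_1})$, as required in (i). What remains truly delicate is the preliminary bound $\bar u \in L^\infty(B_1)$: testing minimality against $\min\{\bar u, K\}$ for $K > \|u_0\|_{L^\infty}$ and invoking the concavity inequality $s^q - K^q \le q K^{q-1}(s-K)$ for $s > K$ yields $\tfrac12\|\nabla(\bar u-K)^+\|_{L^2}^2 \le \lambda_+ K^{q-1}\|(\bar u-K)^+\|_{L^1}$, which a standard Stampacchia iteration upgrades to $\bar u \in L^\infty$; this $L^\infty$-theory of minimizers of singular functionals can alternatively be invoked directly from \cite{AlPh, GiGi, LiSi}.
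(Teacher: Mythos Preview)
Your argument is correct and follows the same overall scheme as the paper---minimize a regularized, penalized functional and pass to the limit---but the choice of penalty is different, and this difference is the whole point. The paper penalizes with $\int_{B_1}\arctan\bigl((u-\bar u)^2\bigr)$ rather than $\tfrac12\int_{B_1}(u-\bar u)^2$; the resulting $f_n = 2(u_n-\bar u)/\bigl(1+(u_n-\bar u)^4\bigr)$ is then bounded in $L^\infty$ \emph{by construction}, with no information on $\bar u$ or $u_n$ beyond $H^1$. Your $L^2$ penalty forces you to first prove $\bar u\in L^\infty$ (via the Stampacchia truncation you sketch, or by citing \cite{AlPh,GiGi,LiSi}) and then bootstrap to a uniform bound on $u_n$ via Kato and the maximum principle. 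Both routes reach the goal, but the paper's bounded-penalty trick bypasses the entire $L^\infty$ detour and requires no regularity of $\bar u$ beyond what was assumed.

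Two small remarks on your write-up: Definition~\ref{def: good} only asks for $u_n\in C^0(\overline{B_1})$, not $C^{1,\alpha}$, and with merely $\bar u\in L^\infty$ that is all you actually get from bootstrapping (the right-hand side is in $L^\infty$, so $u_n\in W^{2,p}_{\loc}\hookrightarrow C^{1,\alpha}_{\loc}$, continuous up to the boundary by standard theory since $u_0\in C^0(\overline{B_1})$). Also, your convergence argument yields $u_n\to\bar u$ only along a subsequence, but this suffices since Definition~\ref{def: good} merely requires the existence of \emph{some} approximating sequence.
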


\begin{proof}
Let $0<\eps_n \to 0^+$, let 
\[
G_n(u):= \frac{\lambda_+}q\left( \eps_n^2+(u^+)^2\right)^\frac{q}{2} + \frac{\lambda_-}q \left(\eps_n^2 +(u^-)^2\right)^\frac{q}{2}, \quad G(u) := \frac{\lambda_+}q (u^+)^q - \frac{\lambda_-}q (u^-)^q,
\]
and let us consider the penalized functional
\[
J_n(u) := \int_{B_1} \left( \frac{1}{2}|\nabla u|^2- G_n(u)\right) + \int_{B_1} \arctan\left( (u - \bar u)^2\right).
\]
For every $n$, and every $u \in \cK$, we have as in \eqref{18 1 1}
\[
\begin{split}
J_n(u) & \ge \int_{B_1} |\nabla u|^2 - C \left(\int_{B_1} |\nabla u|^2 \right)^\frac{q}2 - C,
\end{split}
\]
with constants $C>0$ independent of $n$. Thus, each $J_n$ is coercive and bounded from below, and hence admits a minimizer $u_n$ on $\cK$. Each $u_n$ solves
\[
-\Delta u_n + f_n= \lambda_+ \frac{u_n^+}{\left( \eps_n^2 + u_n^2\right)^\frac{2-q}2}-\lambda_- \frac{u_n^-}{\left( \eps_n^2 + u_n^2\right)^\frac{2-q}2}
\]
where
\[
f_n(x)=\frac{2(u_n(x)-\bar u(x))}{1+(u_n(x)-\bar u(x))^4} 
\]
in $B_1$, and hence $u_n$ is smooth in $B_1$, and continuous up to the boundary, and it remains to show that $u_n \weak u$ weakly in $H^1(B_1)$. By minimality, we have that
\begin{equation}\label{18 1 2}
J_n(u_n) \le  J_n(\bar u) = J(\bar u) + \int_{B_1} \left( G(\bar u) - G_n(\bar u) \right) = J(\bar u) + o(1) 
\end{equation}
as $n \to \infty$. This, together with \eqref{18 1 1} and the Poincar\'e inequality, implies that the sequence $\{u_n\}$ is bounded in $H^1(B_1)$, and hence up to a subsequence it converges to a limit $v \in \cK$ weakly in $H^1(B_1)$, strongly in $L^2(B_1)$, and a.e. in $B_1$. In particular,
\[
\int_{B_1} G_n(u_n) \to \int_{B_1} G(v), \quad \int_{B_1} G(u_n) \to \int_{B_1} G(v),
\]
and using the minimality of $\bar u$ 
\begin{equation}\label{18 1 3}
\begin{split}
J(\bar u) &\le J(u_n) = J_n(u_n) + \int_{B_1} \left( G_n(u_n) - G(u_n) \right) - \int_{B_1} \arctan\left( (u_n-\bar u)^2 \right) \\
& = J_n(u_n) + o(1) - \int_{B_1} \arctan\left( (u_n-\bar u)^2 \right).
\end{split}
\end{equation}
Comparing \eqref{18 1 2} and \eqref{18 1 3}, we deduce that
\[
0 \le \int_{B_1} \arctan\left( (u_n-\bar u)^2 \right) \le o(1),
\]
which finally implies that the (point-wise, and hence also weak) limit of $u_n$ is $v= \bar u$.
\end{proof}

\section{Good solutions which are not local minimizers}\label{sec: ex many}

In this section we show that the class of good solutions is much larger than the one of local minimizers. We consider the boundary value problem
\begin{equation}\label{sym bvp}
\begin{cases}
-\Delta u = |u|^{q-2} u & \text{in $B_1$} \\
u = 0 & \text{on $\pa B_1$}, 
\end{cases} \quad \text{with $q \in (0,1)$ and $B_1 \subset \R^2$}.
\end{equation}
This is the homogeneous Dirichlet problem for equation \eqref{eq} in case $\lambda_+=\lambda_-=1$. The natural functional associated with \eqref{sym bvp} is $I: H_0^1(B_1) \to \R$,
\[
I(u) := \int_{B_1} \left( \frac12|\nabla u|^2 - \frac{1}{q} |u|^q \right).
\]
Although $I$ is not differentibale, since $q \in (0,1)$, minimizers of $I$ are good solutions to \eqref{sym bvp}, by Proposition \ref{prop: min are good}. In what follows we show the existence of infinitely many non-minimal good solutions. For $k \in \N$, we denote by $S_k$ the sector
\[
S_k := \left\{ x=\rho( \cos \theta, \sin \theta): \ 0 <\rho < 1, \ 0<\theta<\frac{\pi}{k} \right\},
\] 
and by $R_k$ the rotation of angle $\pi/k$.

\begin{theorem}\label{thm: ex many}
For every $k \in \N$, problem \eqref{sym bvp} has a sign-changing non-minimal good solution with $u_k \ge 0$, $u_k \not \equiv 0$ in $S_k \cup \cdots \cup R_k^{2k-2}(S_k)$, and $u_k \le 0$, $u_k \not \equiv 0$ in $R_k(S_k) \cup \cdots \cup R_k^{2k-1}(S_k)$.  
\end{theorem}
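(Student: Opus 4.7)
The plan is, for each $k \in \mathbb{N}$, to produce $u_k$ by constrained minimization of $I$ on the antisymmetric subspace
$$
H_k := \{u \in H_0^1(B_1) : u(R_k x) = -u(x) \text{ for a.e. } x \in B_1\},
$$
then verify it is a good solution via an equivariant version of the penalization scheme in Proposition \ref{prop: min are good}, and finally show it cannot be a local minimizer of $I$ on $H_0^1(B_1)$.

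\textbf{Step 1: constrained minimizer with the required sign structure.} The coercivity and boundedness-below estimates in the opening proposition of Section \ref{sec: min are good} are insensitive to the symmetry constraint, so the direct method produces a minimizer $u_k$ of $I$ on the weakly closed set $H_k$. Testing any $\varphi \in H_k \setminus \{0\}$ (for instance the antisymmetric extension of a positive bump compactly supported in $S_k$) gives $I(t\varphi) = \tfrac{t^2}{2}\|\nabla\varphi\|_2^2 - \tfrac{t^q}{q}\|\varphi\|_q^q < 0$ for small $t>0$, because $q<2$ and the singular term dominates; hence $I(u_k) < 0$ and $u_k \not\equiv 0$. Replacing $u_k$ on $S_k$ by $|u_k|_{S_k}|$ and extending antisymmetrically preserves the energy (via Stampacchia's $|\nabla|u|| = |\nabla u|$ a.e.), so we may assume $u_k \ge 0$ on $S_k$; the relation $u_k \circ R_k = -u_k$ then forces the alternating sign pattern on successive rotates of $S_k$, with non-triviality on each sector inherited from $I(u_k) < 0$.

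\textbf{Step 2: good solution via symmetric criticality.} Mimic Proposition \ref{prop: min are good} but minimize on $H_k$ the penalized functional
$$
I_n(u) := \int_{B_1}\left(\tfrac12|\nabla u|^2 - G_n(u)\right) + \int_{B_1} \arctan\left((u-u_k)^2\right),
$$
where $G_n$ is the regularized potential introduced there. Both summands are invariant under the involution $\sigma: u \mapsto -u\circ R_k$: the Dirichlet and $G_n$ terms by evenness of $G_n$ and the isometric change of variables, and the arctan term because $u_k \in H_k$ implies $(u-u_k)\circ R_k = -(u-u_k)$ for any $u \in H_k$, so that $(u-u_k)^2$ is $R_k$-invariant. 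Since $H_k$ is precisely the $\sigma$-fixed subspace, Palais' principle of symmetric criticality makes the $H_k$-minimizer $u_n$ an unconstrained critical point of $I_n$, hence a solution of the regularized equation of Definition \ref{def: good} with $f_n(x) = 2(u_n - u_k)/(1 + (u_n-u_k)^4) \in L^\infty(B_1)$. The energy comparison argument of Proposition \ref{prop: min are good} then yields $u_n \weak u_k$ in $H^1(B_1)$, certifying $u_k$ as a good solution.

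\textbf{Step 3: non-minimality.} By Theorem \ref{thm: Hausdorff}, $\Sigma(u_k)$ has Hausdorff dimension at most $N-2$ (discrete when $N=2$), so the nodal set contains a regular point $x_0 \in \cR(u_k)$, at which $\nabla u_k(x_0) \neq 0$ and, locally, $u_k$ resembles a linear function. Fix a smooth bump $\chi \ge 0$ with $\chi \equiv 1$ on $B_{\rho/2}(x_0)$ and supported in $B_\rho(x_0)\subset B_1$, and set $v_t := u_k + t\chi$. Using the pointwise equation $-\Delta u_k = |u_k|^{q-2}u_k$ outside $\{u_k=0\}$, the first-order term in $I(v_t)-I(u_k)$ cancels, reducing the comparison to a Dirichlet contribution $\tfrac{t^2}{2}\int|\nabla\chi|^2 = O(t^2)$ and the Taylor remainder
$$
-\tfrac{1}{q}\int R(x,t)\,dx, \qquad R(x,t) := |u_k + t\chi|^q - |u_k|^q - qt\chi|u_k|^{q-2}u_k.
$$
A direct change of variables in the \emph{crossed strip} $\{|u_k| < t\chi\}$ (of width $\sim t$ and length $\sim \rho$) gives $\int_{\rm strip} R \sim +\rho\, t^{q+1}$, while Taylor expansion in the complementary smooth region, combined with the identity $\int_t^\rho s^{q-2}\,ds \sim t^{q-1}/(1-q)$ controlling the divergent singular weight $|u_k|^{q-2}$, gives $\int_{\rm smooth} R \sim \tfrac{q(q-1)}{1-q}\rho\, t^{q+1} = -q\,\rho\, t^{q+1}$. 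Summing, $-\tfrac{1}{q}\int R \sim -\tfrac{1-q}{q}\rho\, t^{q+1} < 0$; choosing $t = t_\rho$ with $t_\rho^{1-q} \ll \rho$ makes this negative singular contribution dominate,
$$
I(v_{t_\rho}) - I(u_k) \le -C\rho\, t_\rho^{q+1} + O(t_\rho^2) < 0,
$$
while $\|v_{t_\rho} - u_k\|_{H^1} \to 0$. Hence $u_k$ is not a local minimizer.

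\textbf{Main obstacle.} The delicate part is the sharp sign analysis of $\int R$ in Step 3: the crossed-strip and smooth contributions are of the same order $\rho\, t^{q+1}$ with opposite signs, and only the residual coefficient $(1-q)/q > 0$, arising from the cusp of $s \mapsto |s|^q$ at $s=0$, tilts the balance in favor of non-minimality.
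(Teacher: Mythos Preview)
Your Step 3 contains a genuine computational error that undermines the argument. In the model $u_k\approx x_1$, $\chi\approx 1$ on $[-\rho,\rho]$, the exact $1$D integral is
\[
\int_{-\rho}^{\rho} R\,dx_1=\frac{(\rho+t)^{q+1}+(\rho-t)^{q+1}-2\rho^{q+1}}{q+1}=q\rho^{q-1}t^2+O(t^4),
\]
so the leading order is $t^2$, not $t^{q+1}$. Your strip/smooth decomposition does produce pieces of order $t^{q+1}$, but with the \emph{same} coefficient $c_1=\frac{2^{q+1}-2}{q+1}$ and opposite signs: the strip gives $+c_1t^{q+1}$, while the smooth region gives $q\rho^{q-1}t^2-c_1t^{q+1}$ (not $-qt^{q+1}$ as you claim). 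The mistake is that the second--order Taylor remainder $\frac{q(q-1)}{2}t^2|u_k|^{q-2}$ is not a uniform approximation down to $|u_k|\sim t$: every higher Taylor term contributes to the same $t^{q+1}$ order once integrated against $|u_k|^{q-n}$ near the nodal set, and together they shift $-q$ to $-c_1$. After the cancellation one is left comparing $-\rho^{N+q-2}t^2$ with $+C\rho^{N-2}t^2$, and for small $\rho$ the Dirichlet term wins, so $I(v_t)>I(u_k)$. Your ``residual coefficient $(1-q)/q$'' is therefore spurious, and the perturbation does not decrease the energy. In fact the formal second variation $\int|\nabla\chi|^2+(1-q)\int|u_k|^{q-2}\chi^2$ is positive (indeed $+\infty$ for $\chi$ not vanishing on $Z(u_k)$), which is consistent with $u_k$ being a \emph{local} minimizer; what has to be shown is only that it is not a \emph{global} one.

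The paper's route is completely different and avoids this issue: one proves that any global minimizer $\bar u$ of $I$ on $H_0^1(B_1)$ has constant sign. Since $I(|\bar u|)=I(\bar u)$, also $|\bar u|$ is a minimizer, hence a good solution by Proposition~\ref{prop: min are good}, hence $C^{1,q/(2-q)-}$ by Theorem~\ref{thm: reg good}. If $\cR(\bar u)\neq\emptyset$ this contradicts the $C^1$ regularity of $|\bar u|$ at a regular nodal point; thus $Z(\bar u)=\Sigma(\bar u)$ is (by Theorem~\ref{thm: Hausdorff}) a finite set in $N=2$, forcing $\bar u$ to be sign--definite. Since $u_k$ changes sign, it is not a global minimizer.

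There is also a gap in Step~1. For $u_k\in H_k$ the trace on the radius $\theta=\pi/k$ equals minus the trace on $\theta=0$, but neither need vanish; replacing $u_k|_{S_k}$ by $|u_k|_{S_k}|$ produces traces $|g|$ on both radii, and the antisymmetric extension then has a trace jump $2|g|$ across $\theta=0$, so in general it is not in $H^1(B_1)$. The paper sidesteps this by minimizing the regularized functional on $H_0^1(S_k)$ (zero on the radii built in), taking the positive part, and reflecting; the approximating solutions are strictly positive in $S_k$ by the strong maximum principle, so the odd reflection is automatically $C^1$ across the radii.
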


\begin{proof}
Let $k \in \N$ be fixed. We introduce, for any $\eps \in (0,1)$, the differentiable functional $I_\eps: H_0^1(S_k) \to \R$, 
\[
I_\eps(u) := \int_{S_k} \left( \frac12|\nabla u|^2 - \frac{1}{q} \left(\eps^2 + u^2 \right)^\frac{q}2 \right).
\]
It is not difficult to proceed as in \eqref{18 1 1}, showing that
\begin{equation}\label{8 3 1}
I_\eps(u) \ge \frac{1}{2} \int_{S_k} |\nabla u|^2 - C \left( \int_{B_1} |\nabla u|^2 \right)^\frac{q}2 - C \qquad \text{for every $u \in H_0^1(S_k)$}, 
\end{equation}
where $C>0$ is a positive constant independent of $\eps \in (0,1)$. This implies in particular that $I_\eps$ is coercive and bounded from below, and hence for every $\eps \in (0,1)$ there exists a minimizer $u_\eps$, which we can assume to be nonnegative. Since
\[
-\Delta u_\eps = \frac{u_\eps}{\left( \eps^2 + u_\eps^2 \right)^\frac{2-q}2} \quad \text{in $S_k$},
\]
by the strong maximum principle $u_{\eps}>0$ in $S_k$. We obtain after a finite number of odd reflections a solution, still denoted by $u_{\eps}$, in the whole disc $B_1$, with $u_\eps \in H_0^1(B_1)$\footnote{Notice that, since the equation for $u_\eps$ is no more singular, we do not have any problem of regularity in the reflection. Moreover, in principle the reflected function $u_\eps$ solves the equation only in $B_1 \setminus \{0\}$. But being $\{0\}$ a set of zero capacity, this immediately implies that $u_\eps$ is a solution in $B_1$.}. Moreover, by construction $u_{\eps} > 0$ in $S_k \cup \cdots \cup R_k^{2k-2}(S_k)$, and $u_\eps<0$ in $R_k(S_k) \cup \cdots \cup R_k^{2k-1}(S_k)$. Now, let us choose $u \in H_0^1(S_k)$. By minimality, we have for every $\eps \in (0,1)$
\[
I_\eps(u_\eps) \le \inf_{t > 0} I_\eps( t u) \le  \inf_{t > 0} \, \left(\frac{t^2}2 \int_{S_k} |\nabla u|^2 - \frac{t^q}{q} \int_{S_k}|u|^q\right) =: m<0,
\]   
with $m$ independent of $\eps$. Moreover, by \eqref{8 3 1} the family $\{u_\eps\}$ is bounded in $H_0^1(S_k)$, thus in $H_0^1(B_1)$, and hence up to a subsequence $u_\eps \to u_k$ weakly in $H_0^1(B_1)$, strongly in $L^2(B_1)$, and a.e. in $B_1$. The limit $u_k$ inherits the symmetry of $u_\eps$, $u_k \ge 0$ in $S_k$, and moreover $u_k \not \equiv 0$ in $S_k$, since by convergence
\begin{align*}
\int_{S_k} \left( \frac12|\nabla u|^2 - \frac{1}{q} |u|^q \right)  \le \liminf_{\eps \to 0^+} I_\eps(u_\eps) \le m <0.
\end{align*}
Thus, by definition, $u_k$ is a non-trivial good solution to \eqref{sym bvp}, satisfying the sign condition of the thesis, and it remains only to show that $u_k$ is non-minimal. To this end, we observe that if $\bar u$ minimizes $I$ in $H_0^1(B_1)$, so does $|\bar u|$, and by Theorem \ref{thm: reg good} we have then that $|\bar u| \in C^{1,{\frac{q}{2-q}}-}$. If the regular part of the nodal set $\{\bar u=0\}$ is not empty, this is a contradiction. Therefore, it is necessary that $\{\bar u=0\}= \Sigma(\bar u)$ (the nodal set is purely singular), and by Theorem \ref{thm: Hausdorff} it follows that $\{\bar u=0\}$ is the union of finitely many points. But then either $\bar u \ge 0$ in $B_1$, or $\bar u \le 0$ in $B_1$. This proves that minimizers of $I$ in $H_0^1(B_1)$ have constant sign, and, since $u_k$ is sign-changing, completes the proof.
\end{proof}

\begin{remark}
In this construction we strongly used the symmetry of both the equation ($\lambda_+ = \lambda_-=1$) and the domain $B_1$. On the other hand, a multiplicity result similar to holds in a general domain $\Omega \subset \R^N$, making use of Lusternik-Schnirelman theory. We refer the interested reader to Chapter II.5 of \cite{Struwe}, and we only sketch the argument here (see also \cite{NTTVL2} for a related application to a singular perturbation problem). More precisely, let us define
\[
\Gamma_k=\{A\subset H^1_0(\Omega)\setminus\{0\}\;,\; \textrm{$A$ compact; $A=-A$ and } \gamma(A)\geq k\}
\]
where $\gamma$ denotes the Krasnoselskii genus. Let us denote
\[
c_k^\eps=\inf_{A\in\Gamma_k}\max_A I_\eps\;,\quad c_k:=\inf_{A\in\Gamma_k}\max_A I\;.
\]

Note that $c_k\leq c_{k+1}<0$ for every integer $k$; indeed small $k-1$-dimensional spheres of $H^1_0$ belong to $\Gamma_k$ and, by the sublinear character of $G$, $I$ is negative on such spheres.  As $I_\eps< I$ and $I_\eps\to I$ uniformly as $\eps\to 0$, one easily sees that, for every $k\in\N$, $c_k^\eps\to c_k<0$. On the other hand, as soon as $c_k^\eps<0$ it will be critical for $I_\eps$, for $I_\eps$ satisfies the Palais-Smale compactness condition. We then let $\eps\to 0$ to deduce that every level $c_k$ contains at least a good solution. It remains to show that there are infinitely many distinct $c_k$'s. To this aim, note that, whenever $c<0$,  useing again the validity of the Palais-Smale condition for $I_\eps$, we have $\{I\leq c\}\subset\{I_\eps\leq c\}$ and $\gamma(\{I_\eps\leq c\})<+\infty$. This implies that $\lim_{k\to+\infty}c_k\to 0$. It has to be noted that the equation admits only one positive and one negative solutions for $\eps=0$ at level $c_1$, which are the global minimizers of $I$. Hence there are infinitely many sign changing non minimal good solutions.
\end{remark}

\end{document}